\newcommand{\textcyr}[1]{%
 {\fontencoding{OT2}\fontfamily{wncyr}\fontseries{m}\fontshape{n}\selectfont #1}}
\newcommand{\Sha}{{\mbox{\textcyr{Sh}}}}
\newcommand{\bK}{{\bar{K}}}
\newcommand{\bF}{{\bar{F}}}
\newcommand{\bH}{{\bar{H}}}
\newcommand{\bC}{{\bar{C}}}
\newcommand{\bD}{{\bar{D}}}
\newcommand{\bA}{{\bar{A}}}
\newcommand{\x}{{[\mathbf{x}]}}
\newcommand{\y}{{[\mathbf{y}]}}
\newcommand{\Z}{{\mathbb Z}}
\newcommand{\Q}{{\mathbb Q}}
\newcommand{\F}{{\mathbb F}}
\newcommand{\A}{{\mathbb A}}
\newcommand{\PP}{{\mathbb P}}
\newcommand{\To}{\longrightarrow}
\newcommand{\Too}{\;\longrightarrow\;}
\newcommand{\magma}{{\tt Magma }}
\newcommand{\ord}{\operatorname{ord}}
\newcommand{\Jac}{\operatorname{Jac}}
\newcommand{\GL}{\operatorname{GL}}
\newcommand{\AGL}{\operatorname{AGL}}
\newcommand{\HH}{\operatorname{H}}
\newcommand{\PGL}{\operatorname{PGL}}
\newcommand{\Aut}{\operatorname{Aut}}
\newcommand{\Hom}{\operatorname{Hom}}
\newcommand{\Mod}{\operatorname{\,mod}}
\newcommand{\Map}{\operatorname{Map}}
\newcommand{\Spec}{\operatorname{Spec}}
\newcommand{\Sel}{\operatorname{Sel}}
\newcommand{\Pic}{\operatorname{Pic}}
\newcommand{\eps}{\varepsilon}
\newcommand{\Cov}{\operatorname{Cov}}
\newcommand{\Cl}{\operatorname{Cl}}
\newcommand{\pr}{\operatorname{pr}}
\newcommand{\res}{\operatorname{res}}
\newcommand{\unr}{{\text{\rm unr}}}
\newcommand{\diw}{\operatorname{div}}
\newcommand{\Div}{\operatorname{Div}}
\newcommand{\Princ}{\operatorname{Princ}}
\newcommand{\Aff}{\operatorname{Aff}}
\newcommand{\Br}{\operatorname{Br}}
\newcommand{\Ob}{\operatorname{Ob}}
\newenvironment{Proof}{\par\noindent{\sc Proof:}}%
                      {\hspace*{\fill}\nobreak$\Box$\par\hspace{2mm}}
\newenvironment{Remark}{\par\noindent{\sc Remark:}}{\par\hspace{2mm}}
                      {\hspace*{\fill}\nobreak$\Box$\par}
\newtheorem{Theorem}{Theorem}[section]
\newtheorem{Lemma}[Theorem]{Lemma}
\newtheorem{Proposition}[Theorem]{Proposition}
\newtheorem{Corollary}[Theorem]{Corollary}
\newtheorem{Definition}[Theorem]{Definition}
\numberwithin{equation}{section}
\title[Second $p$-descent]{Second $p$-descents on elliptic curves}
\author{Brendan Creutz}
\address{School of Mathematics and Statistics, Carslaw Building F07, University of Sydney, NSW 2006, Australia}
\email{brendan.creutz@sydney.edu.au}
\date{20 April 2012\\ \small 2010 Mathematics Subject Classifications: 11G05, 11Y50}
\begin{document}

\begin{abstract}
Let $p$ be a prime and let $C$ be a genus one curve over a number field $k$ representing an element of order dividing $p$ in the Shafarevich-Tate group of its Jacobian. We describe an algorithm which computes the set of $D$ in the Shafarevich-Tate group such that $pD = C$ and obtains explicit models for these $D$ as curves in projective space. This leads to a practical algorithm for performing explicit $9$-descents on elliptic curves over $\Q$.
\end{abstract}

\maketitle

\section{Introduction}
Let $E$ be an elliptic curve over a number field $k$. The celebrated Mordell-Weil Theorem asserts that the set $E(k)$ of $k$-rational points on $E$ is a finitely generated abelian group. One would like to be able to determine this group in practice. In addition to the Mordell-Weil group, there is another important arithmetic invariant of an elliptic curve: the Shafarevich-Tate group $\Sha(E/k)$.  An $n$-descent on an elliptic curve is a way to obtain information on both of these groups.  For each integer $n\ge 2$, there is an exact sequence of finite abelian groups relating the two:
\[0\rightarrow E(k)/nE(k)\rightarrow \Sel^{(n)}(E/k)\rightarrow\Sha(E/k)[n]\rightarrow 0.\]
The middle term is a finite group known as the $n$-Selmer group. An {\em explicit $n$-descent} on $E$ computes the $n$-Selmer group and represents its elements as curves in projective space. Determination of $\Sel^{(n)}(E/k)$ yields explicit bounds on the Mordell-Weil rank and partial information on the Shafarevich-Tate group. In addition, the models produced can often be used to find generators of large height in the Mordell-Weil group or to study explicit counterexamples to the Hasse principle.

The technique of descent to study solutions of Diophantine equations goes back at least to Fermat. The idea is to parameterize the rational points on a given variety by the rational points on some finite collection of coverings. Reichardt and Lind used descent arguments to produce the first known counterexample to the Hasse principle \cite{Lind, Reichardt}. In a similar spirit Selmer studied diagonal cubic curves to systematically obtain counterexamples of degree $3$ \cite{Selmer}. The algorithm presented in this paper generalizes his method to arbitrary cubic curves. In one of the first applications of computers to number theory Birch and Swinnerton-Dyer \cite{BSD} studied the Mordell-Weil groups of elliptic curves over $\Q$ using $2$-descents. These computations produced empirical evidence motivating their famous conjecture. Together with deep results of Kolyvagin, Wiles and others \cite{modularity2, Kolyvagin, modularity1}, descents have now been used to verify the full conjecture for all elliptic curves over $\Q$ of rank $\le 1$ and conductor $\le 5000$ \cite{CreutzMiller}.

Historically $2$-descents were first performed using an explicit enumeration of certain homogeneous spaces of the elliptic curve. While applicable in principle to larger $n$ and over arbitrary number fields, the method is quickly defeated by combinatorial explosion when one ventures much beyond $2$-descents over $\Q$. There is an alternative approach which is based more closely on the original proof of the Mordell-Weil theorem \cite{Mordell,WeilDescent}. First one computes the $n$-Selmer group as a subgroup of a finite exponent quotient of the multiplicative group of some \'etale $k$-algebra. One is then left with the task of constructing explicit models for the coverings from representatives in the algebra. The first step requires deep arithmetic knowledge, such as $S$-class and -unit group information, of the constituent fields of the algebra.  It is only in the past two decades or so that improved computing power, higher-level computer algebra software and better theoretical understanding have made computations using this alternative approach feasible. 

Typically the algebra is related in some way to the group $E[n]$ of $n$-torsion points on $E$. For arbitrary $n$ there is an algorithm involving the \'etale algebra $R = \Map_k(E[n]\times E[n],\bar{k})$ of Galois equivariant maps from $E[n]\times E[n]$ to an algebraic closure of $k$ (see \cite[I.3.2]{CFOSS}). Generically $R$ contains an extension of $k$ of degree $O(n^4)$ making the arithmetic computations infeasible in practice. In general one can reduce computation of the $n$-Selmer group to the case that $n$ is a prime power. For $n=p$ a prime, there is a method using the \'etale $k$-algebra $\Map_k(E[p],\bar{k})$. Generically this splits as a product of $k$ with some field extension $A$ of degree $p^2-1$. The $p$-Selmer group is then computed as a subgroup of $A^\times/A^{\times p}$. The situation for $n=2$ is described in \cite{Simon1} and in \cite{SchJAC,StollIMP} where $2$-descent on Jacobians of hyperelliptic curves is also considered. For odd $p$, this was developed in the papers \cite{DSS,SchaeferStoll}. For $p=2,3$, these algorithms are practical over number fields of moderate degree and discriminant and are part of the \magma computer algebra package \cite{magma}. For larger $p$ these computations may be possible when $E$ admits a $p$-isogeny \cite{CreutzMiller,FisherThesis,FisherJEMS,MillerStoll,SchaeferStoll}, but for general elliptic curves $p$-descents for $p \ge 5$ are rather impractical.

In the second step, one starts with representatives for the $n$-Selmer group in some \'etale algebra and wants to construct explicit models for the corresponding coverings. For $n\ge 3$, the problem is studied in the series of papers \cite{CFOSS}, the situation for $n=2$ having been well-known for some time \cite[Section 15]{CaLectures}. For $n = 2$, one gets double covers of the projective line ramified in $4$ points. The models for $n = 3$ are plane cubics. For $n \ge 4$ one has degree $n$ curves in $\PP^{n-1}$ whose homogeneous ideal is generated by $n(n-3)/2$ quadrics. Examples of the utility of such models for computing generators of large height abound: \cite{BC,MinRed,Fisher6and12} all contain striking examples. In addition to this and the ability to produce explicit elements in the Shafarevich-Tate group, obtaining such models explicitly opens the possibility of doing higher descents.

To our knowledge, the only previously existing practical methods for computing the $n$-Selmer group of a general elliptic curve when $n $ is a higher prime power are for $n=4$ \cite{BruinStoll, Ca4d,MSS,Womack} and $n=8$ \cite{Stamminger}. Rather than performing a direct $2^m$-descent, these proceed by performing $2$-descents in a tower of coverings. For example, the output of a $2$-descent on $E$ is a finite collection of double covers of $\PP^1$ ramified in four points. A second $2$-descent computes the collection of everywhere locally solvable $2$-coverings of one (or all) of these. The running time is dominated by the computation of arithmetic information in the \'etale algebra corresponding to the ramification points of the double cover of $\PP^1$. This is typically a field of degree $4$, making the algorithm far more efficient than a direct $4$-descent on $E$. The output of a second $2$-descent is a finite collection of quadric intersections in $\PP^3$. These become the input for Stamminger's method for third $2$-descent. 

We present the analogous method for performing an explicit $p^2$-descent on an elliptic curve when $p$ is an odd prime. The first step is an explicit $p$-descent on $E$, yielding models for the elements of $\Sel^{(p)}(E/k)$ as genus one normal curves of degree $p$ in $\PP^{p-1}$. We then perform an explicit $p$-descent on some curve $C$ thus obtained. This is a computation of the finite set $\Sel^{(p)}(C/k)$ of everywhere locally solvable $p$-coverings of $C$ which produces explicit models for its elements as genus one normal curves of degree $p^2$ in $\PP^{p^2-1}$. Performing this computation for each element in $\Sel^{(p)}(E/k)$ one obtains information that is just as good as that obtained by an explicit $p^2$-descent on $E$. 

As is typical for descents, the running time of our algorithm is dominated by the computation of class and unit group information in a certain \'etale $k$-algebra. In our situation this is the \'etale $k$-algebra of degree $p^2$ corresponding to the set of flex points of $C$. In addition, our algorithm requires computations in a second \'etale algebra of degree at most $p^2(p^2-1)/2$. The most expensive operation required there is the extraction of $p$-th roots of elements known to be $p$-th powers. When $p=3$, one can get away with an algebra of degree $12$, where such computations are entirely feasible. For larger $p$, however, this provides another barrier to the practical applicability of the algorithm.

Our algorithm is practical for $p=3$ and $k = \Q$ and we have implemented it in the computer algebra system \magma\cite{magma}. Using this we are able to exhibit elements of order $9$ in the Shafarevich-Tate group of an elliptic curve. Alternatively, if the $3$-primary part of the Shafarevich-Tate group has exponent $3$, then the algorithm can be used to prove this unconditionally. We give several examples. In the first we consider the smallest elliptic curve (ordered by conductor) with elements of order $9$ in its Shafarevich-Tate group. Using first and second $3$-descents on on it and an isogenous curve we compute the full $3$-primary part of $\Sha$. In a second example, we give an explicit model in $\PP^8$ for an element of order $9$ in the Shafarevich-Tate group of an elliptic curve with irreducible mod $3$ representation. We also give an example verifying the full Birch and Swinnerton-Dyer conjecture for a CM elliptic curve over $\Q$ with Shafarevich-Tate group of order $144$.

\subsection{Organization}

The first two sections contain the necessary background information for setting up our descent. The majority of the material here should be well known to the experts. The only possible exceptions are our extension of the obstruction map to composite coverings and Proposition \ref{DescentOnPic}, which is slightly more general than the the usual results appearing in the literature. In Section \ref{DerivedGK} we establish a rather general framework for studying Picard groups of curves by using Galois equivariant families of functions on the curve. Ideas of this ilk have tended to play a role in most methods of explicit descent. Section \ref{ncoverings} introduces the main objects of study: $n$-coverings. We deal with their basic properties and those of related objects such as genus one normal curves and the obstruction map (most of this can be found in \cite[I]{CFOSS}). 

The following three sections develop the theoretical basis for the algorithm and the cohomological interpretation of second $p$-descents. For the most part we work with a fixed genus one normal curve $C$ of degree $p$ defined over an arbitrary perfect field of characteristic not equal to $p$. The main object of study is the set of isomorphism classes of $p$-coverings of $C$ with trivial obstruction. The primary tool is the {\em descent map}, which gives a concrete algebraic realization of this rather abstractly defined set.

In Section \ref{AffineStructure} we identify the domain of the descent map as a principal homogeneous space for a certain subgroup of $\HH^1(K,E[p])$. We give both a cohomological description of this subgroup and explicit representations of its members as elements of the multiplicative group of a certain \'etale $K$-algebra $H$. The descent map is then defined in Section \ref{TheDescentMap} as a map taking values in a certain quotient of $H^\times$. Ultimately we will see that the descent map may be interpreted as an affine map (loosely speaking a linear map followed by a translation), so that the material of Section \ref{AffineStructure} can be understood as a study of its linear part. In Sections \ref{InjectivityOfTheDescentMap} and \ref{ImageOfTheDescentMap} we show that the descent map is injective and determine its image. Then in Section \ref{InverseOfTheDescentMap} we construct an explicit inverse to the descent map, which shows how to obtain explicit models for elements in the image of the descent map as genus one normal curves of degree $p^2$ in $\PP^{p^2-1}$.

Following this we specialize in Section \ref{ComputingTheSelmerSet} to the case that the base field is a number field. Armed with the material of the preceding sections, computation of the Selmer set is almost routine. The descent map gives a bijection from $\Sel^{(p)}(C/k)$ onto its image, which we call the algebraic $p$-Selmer set. This gives an algebraic presentation of the $p$-Selmer set which is amenable to machine computation. After outlining the complete algorithm we conclude with a small selection of examples in Section \ref{Examples}.

\subsection{Notation}
If $\mathcal{G}$ is an abelian group and $n \ge 1$, we use $\mathcal{G}[n]$ to denote the subgroup of $\mathcal{G}$ consisting of elements which are killed by $n$. For a commutative ring $R$ we use $R^\times$ to denote the multiplicative group of invertible elements.

The symbol $K$ will always denote a perfect field and $p$ will always denote a prime number not equal to the characteristic of $K$. We always assume we have a fixed algebraic closure $\bK$ of $K$ and any algebraic extension of $K$ we write down is taken to be a subfield of $\bK$. We write $G_K$ for the absolute Galois group of $K$. For $n$ prime to the characteristic of $K$ we use $\mu_n$ to denote the $G_K$-module of $n$-th roots of unity in $\bK$. Since we restrict to perfect fields, the term local field will be used to mean the completion of a number field at some prime.

If $K \subset L$ is an extension of fields and $A$ is a $K$-algebra, then $A\otimes_KL$ is an $L$-algebra which we will denote simply by $A_L$. In the particular case $L = \bK$, the notation $\bA$ will also be used to denote $A\otimes_K\bK$. If $\phi : A \to B$ is a morphism of $K$-algebras, the induced map $A_L \to B_L$ will also be denoted by $\phi$. 

For a smooth, projective and absolutely irreducible curve $C$ defined over $K$ and a commutative $K$-algebra $A$, we write $C\otimes_KA$ for the scheme $C \times_{\Spec(K)}\Spec(A)$. When $A = \bK$, we also write $\bC = C \otimes_K \bK$. We use $\kappa(\bC)$ and $\kappa(C)$ to denote the function field of $\bC$ and its $G_K$-invariant subfield, respectively. We use $\Div(\bC)$ to denote the free abelian group on the set of $\bK$-points of $C$. Its elements are called divisors and will often be written as integral linear combinations of points. If we wish to make it clear that we are considering a point as a divisor, we will use square brackets. So $[P] \in \Div(\bC)$ is the divisor corresponding to $P \in C(\bK)$. The action of $G_K$ on points extends to an action on divisors. We use $\Div(C)$ for the $G_K$-invariant subgroup and refer to its elements as $K$-rational divisors. A closed point of the $K$-scheme $C$ corresponds to a Galois orbit of points in $C(\bK)$. As such a closed point may be interpreted as an element of $\Div(C)$. In fact, $\Div(C)$ is the free abelian group on such closed points. We denote the divisor of a function $f \in \kappa(\bC)^\times$ by $\diw(f)$.

Two divisors are said to be linearly equivalent if their difference is equal to $\diw(f)$ for some rational function $f\in\kappa(\bC)^\times$. The group of principal divisors is $\Princ(\bC) = \{ \diw(f) : f\in\kappa(\bC)^\times \}$. It follows from Hilbert's Theorem 90 that the $G_K$-invariant subgroup, $\Princ(C)= \Princ(\bC)^{G_K}$, is the group of divisors that are divisors of functions in $\kappa(C)^\times$. We use $\Pic(\bC)$ to denote the group of divisors modulo principal divisors. We remind the reader that not every $K$-rational divisor class can be represented by a $K$-rational divisor (for an example see \cite{CaV}). Since the degree of the divisor associated to a rational function is $0$, there is a well-defined notion of degree for divisor classes in $\Pic(\bC)$. We denote the set of divisor classes of degree $i \in \Z$ by $\Pic^i(\bC)$. We use similar notation for the other groups defined above. The group $\Pic^0(\bC)^{G_K}$ may identified with the group of $K$-rational points on the Jacobian of $C$.

\subsection*{Acknowledgements} The majority of this paper is taken from the author's PhD thesis \cite{CreutzThesis}. It is a pleasure to thank Michael Stoll for introducing me to this topic and for sharing his insights, Tom Fisher for his careful reading of the thesis and several useful comments, Steve Donnelly for helpful discussions relating to the implementation and the anonymous referee for their remarks.

\section{Derived $G_K$-sets and descent on Picard groups}
\label{DerivedGK}
In this section we give a rather general recipe for writing down homomorphisms from the Picard group of a curve into a finite exponent quotient of some \'etale $K$-algebra. The vast majority of explicit descents, whether they be on curves or their Jacobians, make use of such a map (see for example the philosophy described in \cite{SchJAC}).

\subsection{Etale $K$-algebras and $G_K$-sets}
\label{GKEtale}
If $\Omega$ is a finite $G_K$-set, define $\bA(\Omega) = \Map(\Omega,\bK)$ to be the $\bK$-algebra of maps from $\Omega$ to $\bK$. There is a natural action of $G_K$ on $\bA(\Omega)$ defined by
\[ \phi^\sigma : x \mapsto \left(\phi(x^{\sigma^{-1}})\right)^\sigma\,.\] As a $\bK$-algebra $\bA(\Omega)$ is isomorphic to $\prod_{i=1}^{\#\Omega}\bK$, but the action of $G_K$ is twisted by the action on $\Omega$. The $G_K$ invariant subspace of $\bA(\Omega)$ is the space of $G_K$-equivariant maps $\Map_K(\Omega,\bK) := \Map(\Omega,\bK)^{G_K}$. This is an \'etale $K$-algebra; it splits as a product of finite extensions of $K$ corresponding to the orbits in $\Omega$. This defines an anti-equivalence between the categories of finite $G_K$-sets and \'etale $K$-algebras.

We will frequently find ourselves working with objects defined over such algebras, e.g. varieties, points, functions, etc. From a scheme-theoretic point of view this presents no difficulty. It will, however, be convenient to interpret these objects as Galois equivariant maps. For example suppose $C$ is a $K$-variety and $A = \Map_K(\Omega,\bK)$.  Then $C\otimes_K A$ is a scheme over $A$. Geometrically it is a disjoint union of copies of $\bar{C}$ parameterized by $\Omega$. We will abuse notation by writing $\kappa(C\otimes_K A)$ for $\kappa(C) \otimes_K A$ and referring to its elements as rational functions on $C\otimes_K A$. We may interpret such a rational function as a Galois equivariant map $\Omega \to \kappa(\bC)^\times$ or equivalently as a Galois equivariant family of rational functions $f_\omega \in \kappa(\bC)^\times$ indexed by $\omega \in \Omega$. The Galois equivariance means that $(f_\omega)^\sigma = f_{\omega^\sigma}$ for all $\sigma \in G_K$. The divisor of $f$ can be interpreted as a Galois equivariant map $\Omega \to \Div(\bC)$, and so on.

Similarly, for $n$ prime to the characteristic of $K$, we define $\mu_n(\bar{A}) = \Map(\Omega,\mu_n)$. This is the $n$-torsion subgroup of $\bar{A}^\times$. We mention here the generalization of Hilbert's Theorem 90 to \'etale algebras which states that $\HH^1(K,{\bA}^\times) = 0$. To prove it one uses Shapiro's Lemma to reduce to the usual Theorem 90 for fields. Using this with the Kummer sequence (as one does for fields) one is lead to an isomorphism $\HH^1(K,\mu_n(\bA)) \simeq A^\times/A^{\times n}$.

If $\Psi,\Omega$ are finite $G_K$-sets, we will say that $\Psi$ is {\em derived from} $\Omega$ if the elements of $\Psi$ are unordered tuples (multisets) of elements of $\Omega$ and the action on $\Psi$ is induced by that on $\Omega$. For example, the set of unordered pairs (distinct or not) of elements in $\Omega$ is a derived $G_K$-set. One can also interpret the elements of $\Psi$ as formal integral linear combinations of elements of $\Omega$ with nonnegative coefficients. In this interpretation we write $b \in \Psi$ as a sum $\sum n_a a$ of distinct elements $a \in \Omega$.

To $\Omega$ and $\Psi$ we associate \'etale $K$-algebras, $A(\Omega) = \Map_K(\Omega,\bK)$ and $A(\Psi) = \Map_K(\Psi,\bK)$. If $\Psi$ is derived from $\Omega$ as a $G_K$-set, then we define the {\em induced norm maps} between the corresponding algebras:
\[ A(\Omega) = \Map_K(\Omega,\bK) \ni \phi \mapsto \left( (b=\sum n_aa) \mapsto \prod_{a}\phi(a)^{n_a} \right) \in \Map_K(\Psi,\bK) = A(\Psi)\,. \]

\subsection{Descent on Picard groups}
Let $C$ be a smooth, absolutely irreducible projective curve over a perfect field $K$. Let $\Omega \subset C(\bK)$ be a finite $G_K$-set of geometric points on $C$ and $\Psi \subset \Div(\bar{C})$ a finite $G_K$-set of effective divisors on $C$ which are supported on $\Omega$. As above $A(\Omega)$ and $A(\Psi)$ denote the corresponding \'etale $K$-algebras. As a $G_K$-set $\Psi$ is derived from $\Omega$ and we have an induced norm map $\partial:A(\Omega) \to A(\Psi)$.

Now consider a rational function 
\[ f = (f_\psi) \in \kappa(C \otimes_K A(\Psi)\,)^\times = \Map_K(\Psi,\kappa(\bar{C})^\times)\,.\] We consider this either as a function on $C \otimes_K A(\Psi)$ or as a Galois equivariant family of rational functions in $\kappa(\bar{C})^\times$ parameterized by $\psi \in \Psi$. We interpret the divisor of $f$, an element of $\Div(C\otimes_K A(\Psi))$, as a $G_K$-equivariant map $\Psi \to \Div(\bar{C})$. We write $\diw(f)$ as a difference of effective divisors. This can be interpreted as a difference of a pair of $G_K$-equivariant maps $[f]_0, [f]_\infty : \Psi \to \Div(\bar{C})$ whose values at $\psi \in \Psi$ are the zero and pole divisors of $f_\psi$, respectively. Now suppose $f$ satisfies
\begin{enumerate}
\item $\forall \psi \in \Psi$, $[f]_0(\psi) = \psi$, and 
\item $\forall \psi \in \Psi$ and $\sigma \in G_K$, $[f]_\infty(\psi^\sigma) = [f]_\infty(\psi)$.
\end{enumerate}
The first condition says that $\psi$ is the zero divisor of the function $f_\psi \in \kappa(\bC)^\times$. The second condition amounts to saying that the map $[f]_\infty:\Psi\to\Div(\bC)$ is constant on each $G_K$-orbit in $\Psi$. The Galois equivariance then implies that, on each orbit $\mathcal{O} \subset \Psi$, the value of $[f]_\infty$ is some $K$-rational divisor $d_\mathcal{O} \in \Div(C)$. We write each as a sum
\[ d_\mathcal{O} = \sum_{\mathcal{P}} m_{\mathcal{O},\mathcal{P}}\mathcal{P}\,,\] of closed points $\mathcal{P}$ and set $m_\mathcal{O} = \gcd(m_{\mathcal{O},\mathcal{P}})$ (recall that a closed point corresponds to a $G_K$-orbit of points in $C(\bK)$). Using these weights, we define a map \[\iota: K \ni a \mapsto (a^{m_\mathcal{O}})_{\mathcal{O}} \in \prod_{\mathcal{O}} A(\mathcal{O}) = A(\Psi) \]

\begin{Proposition}
\label{DescentOnPic}
With notation as above, let $d = \sum_Pn_P[P] \in \Div(C)$ (written as a sum of $\bK$-points of $C$) be any $K$-rational divisor on $C$ with support disjoint from all zeros and poles of the $f_\psi$.
\begin{enumerate}
\item Evaluating $f$ on $d$ gives a well-defined element $f(d) := \prod_{P} f(P)^{n_P} \in A(\Psi)^\times$.
\item $f$ induces a unique homomorphism 
\[ \Pic(C) \to \frac{A(\Psi)^\times}{\iota(K^\times)\partial(A(\Omega)^\times)}\] with the property that, for all $d$ as above,  the image of the class of $d$ is equal to the class of $f(d)$.
\end{enumerate}
\end{Proposition}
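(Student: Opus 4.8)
The plan is to reduce everything to Weil reciprocity on $\bC$ together with a standard moving lemma. Part (1) amounts to checking that the unit $f(d)$ — which a priori lies only in $\bA(\Psi)^\times=\Map(\Psi,\bK)^\times$, each $f(P)=(f_\psi(P))_\psi$ being a unit because $d$ avoids all zeros and poles of the $f_\psi$ — actually lies in $A(\Psi)^\times=\bA(\Psi)^{\times\,G_K}$. For this I would use the Galois-equivariance $(f_\psi)^\sigma=f_{\psi^\sigma}$ of the family $f$: it unwinds to $f(P)^\sigma=f(P^\sigma)$ for all $\sigma\in G_K$ and $P\in C(\bK)$, whence $f(d)^\sigma=\prod_P f(P^\sigma)^{n_P}=f(d^\sigma)=f(d)$ since $d$ is $K$-rational. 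As $f(d)$ is given by an explicit formula, this membership is exactly what ``well-defined'' asks.

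For part (2) let $S\subset C(\bK)$ be the finite $G_K$-stable set of zeros and poles of the $f_\psi$, and $\Div^S(C)\subseteq\Div(C)$ the subgroup of $K$-rational divisors supported off $S$. Evaluation $d\mapsto f(d)$ is visibly a homomorphism $\Div^S(C)\to A(\Psi)^\times$; let $\bar f$ be its composite with the projection onto $Q:=A(\Psi)^\times/\bigl(\iota(K^\times)\partial(A(\Omega)^\times)\bigr)$. I would then establish two things: (a) a moving lemma — for any $d_0\in\Div(C)$, weak approximation at the finitely many closed points of $C$ meeting $S$ produces $h\in\kappa(C)^\times$ with $\diw(h)+d_0$ supported off $S$, so $\Div^S(C)$ surjects onto $\Pic(C)$; this forces uniqueness of the sought homomorphism and reduces existence to (b); (b) the vanishing statement $f(\diw g)\in\iota(K^\times)\partial(A(\Omega)^\times)$ for every $g\in\kappa(C)^\times$ whose divisor avoids $S$.

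Statement (b) is the heart of the proof, and is what the apparatus of $\partial$ and $\iota$ is designed for. Fix such a $g$. For each $\psi$, $\diw(f_\psi)=\psi-d_{\mathcal O(\psi)}$ is supported in $S$ while $\diw(g)$ is disjoint from it, so Weil reciprocity for the pair $(f_\psi,g)$ on $\bC$ gives
\[
 f(\diw g)_\psi=\prod_P f_\psi(P)^{\ord_P g}=\prod_P g(P)^{\ord_P f_\psi}=\frac{g(\psi)}{g(d_{\mathcal O(\psi)})}\,.
\]
Now $(g(\psi))_\psi=\partial(\widehat g)$ for $\widehat g\in A(\Omega)^\times$ the $G_K$-equivariant map equal to $g$ on $\bigcup_\psi\Supp(\psi)$ and to $1$ on the remaining points of $\Omega$ (units, since $\diw g$ misses $\bigcup_\psi\Supp(\psi)\subseteq S$), so this factor lies in $\partial(A(\Omega)^\times)$. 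The other factor $(g(d_{\mathcal O}))_{\mathcal O}$ is constant on $G_K$-orbits with value $g(d_{\mathcal O})=\prod_{\mathcal P}N_{\kappa(\mathcal P)/K}\bigl(g(\mathcal P)\bigr)^{m_{\mathcal O,\mathcal P}}\in K^\times$; as $m_{\mathcal O}=\gcd_{\mathcal P}m_{\mathcal O,\mathcal P}$ divides every exponent, it is the $m_{\mathcal O}$-th power of $g$ evaluated on the primitive part $m_{\mathcal O}^{-1}d_{\mathcal O}$, which is the form one must match against $\iota(K^\times)\partial(A(\Omega)^\times)$. I expect this last reconciliation of $(g(d_{\mathcal O}))_{\mathcal O}$ with $\iota$ and $\partial$ — rather than the reciprocity step itself — to be the one genuinely delicate point, and it is precisely what forces the $\gcd$ into the definition of $\iota$.

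Granting (1), (a) and (b), $\bar f$ factors through $\Div^S(C)/\bigl(\Princ(C)\cap\Div^S(C)\bigr)=\Pic(C)$, giving the unique homomorphism $\Pic(C)\to Q$ with $[d]\mapsto$ (class of $f(d)$) for admissible $d$. If instead $\Pic(C)$ denotes $\Pic(\bC)^{G_K}$, so that some classes lack $K$-rational representatives, I would first build the analogous map into $\bA(\Psi)^\times$ modulo $\iota(\bK^\times)\partial(\bA(\Omega)^\times)$ and then take $G_K$-invariants, using $\HH^1(K,\bA(\Psi)^\times)=0$ to descend.
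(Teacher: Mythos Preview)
Your proposal is correct and follows essentially the same route as the paper: a moving lemma to ensure every class has an admissible representative, then Weil reciprocity to show that for $g\in\kappa(C)^\times$ with $\diw(g)$ avoiding $S$ one has $f(\diw g)_\psi = g(\psi)/g(d_{\mathcal O(\psi)})$, with the numerator landing in $\partial(A(\Omega)^\times)$ and the denominator in $\iota(K^\times)$. The paper cites Lang for the moving lemma rather than sketching weak approximation, and for the numerator it simply takes $\alpha(\omega)=h(\omega)$ for \emph{all} $\omega\in\Omega$ (your extension by $1$ off $\bigcup_\psi\Supp(\psi)$ is harmless but unnecessary, since $\Omega\subset S$); otherwise the arguments coincide, including the observation that a product of $m_{\mathcal O,\mathcal P}$-th powers in $K^\times$ is automatically a $\gcd(m_{\mathcal O,\mathcal P})$-th power, which is exactly your ``primitive part'' remark. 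Your final paragraph about descending from $\Pic(\bC)^{G_K}$ is not needed here, as the paper's $\Pic(C)$ denotes $\Div(C)/\Princ(C)$.
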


\begin{Proof}
Any rational function $h \in \kappa(\bC)^\times$ defines a homomorphism from the group of divisors of $C$ with support disjoint from the support of $\diw(h)$ to the multiplicative group of $\bK$ by 
\[ d = \sum n_P P \mapsto h(d) = \prod h(P)^{n_P} \in \bK^\times\,.\] If $K'$ is some extension of $K$ and $h$ is defined over $K'$, then this restricts to give a homomorphism from the group of $K'$-rational divisors with support disjoint from that of $\diw(h)$ into $K'^\times$. If $f$ is as in the proposition, then it is defined over $A(\Psi)$ which splits as a product of extensions of $K$, so the first statement in the proposition is clear.

For the second, define \[\phi_f :\Pic(C) \to \frac{A(\Psi)^\times}{\iota(K^\times)\partial(A(\Omega)^\times)}\] by setting the value of $\phi_f$ on $\Xi \in \Pic(C)$ equal to the class of $f(d)$, where $d \in \Div(C)$ is any $K$-rational divisor representing $\Xi$ with support disjoint from $\Omega$ and $[f]_\infty$. If this is well-defined, then it is clearly the unique homomorphism with the stated property.

First we argue that such $d$ exists. This follows from \cite[page 166]{LangAV} where it is shown that any $K$-rational divisor class which is represented by a $K$-rational divisor contains a $K$-rational divisor avoiding a given finite set (see also \cite[footnote to page 4]{SikShort}). Next we use Weil reciprocity to show that the result does not depend on the choice for $d$.

Let $h \in \kappa(C)^\times$ be any rational function whose zeros and poles are disjoint from those of all of the $f_\psi$. We will show that $f(\diw(h)) \in \iota(K^\times)\partial(A(\Omega)^\times)$, from which the proposition follows. For each $\psi \in \Psi$, the divisor of $h$ is prime to \[\diw(f_\psi) = [f]_0(\psi) - [f]_\infty(\psi) = \psi - [f]_\infty(\psi)\,.\] So by Weil reciprocity, \[f_\psi(\diw(h)) = h( \diw(f_\psi)) = \frac{ h([f]_0(\psi))}{h([f]_\infty(\psi))}\,.\] Interpreting this as a map we have \[ f(\diw(h)) = \frac{h([f]_0)}{h([f]_\infty)} \in \Map_K(\Psi,\bK^\times) = A(\Psi)^\times\,.\] Define $\alpha \in \Map_K(\Omega,\bK^\times) = A(\Omega)^\times$ by $\alpha:\Omega \ni \omega \mapsto h(\omega) \in \bK^\times$. Now consider $\partial(\alpha) \in \Map_K(\Psi,\bK^\times) = A(\Psi)^\times$. The value of $\partial(\alpha)$ at $\psi = \sum n_\omega \omega \in \Psi$ is \[ \partial(\alpha)_\psi = \prod \alpha(\omega)^{n_\omega} = \prod h(\omega)^{n_\omega} = h(\psi) = h([f]_0(\psi))\,.\] This shows that $h([f]_0) = \partial(\alpha) \in \partial(A(\Omega)^\times)$.

It remains to show that $h([f]_\infty) \in \iota(K^\times)$. Recall that the value of  $[f]_\infty$ on the orbit $\mathcal{O} \subset \Psi$ is the divisor $d_{\mathcal{O}} = \sum_{\mathcal{P}}m_{\mathcal{O},\mathcal{P}}\mathcal{P}$ and that $m_{\mathcal{O}} = \gcd(m_{\mathcal{O},\mathcal{P}})$. The $\mathcal{P}$ are closed points on $C$. In particular, each is a $K$-rational divisor and we know how to evaluate $h$ at $\mathcal{P}$ to obtain an element in $K^\times$. Extending by linearity we have that the value taken by $h([f]_\infty)$ on any $G_K$-orbit $\mathcal{O} \subset \Psi$ is $\prod_{\mathcal{P}} h(\mathcal{P})^{m_{\mathcal{O},\mathcal{P}}}$, which is a product of $m_{\mathcal{O},\mathcal{P}}$-th powers in $K^\times$. A product of $m_{\mathcal{O},\mathcal{P}}$-th powers is clearly a $\gcd(m_{\mathcal{O},\mathcal{P}})$-th power, so the value of $h([f]_\infty)$ on $\mathcal{O}$ is in $K^{\times m_{\mathcal{O}}}$. It follows that $h([f]_\infty) \in \iota(K^\times)$. This completes the proof.
\end{Proof}

\begin{Remark}
To do a $p$-descent on an elliptic curve $E$ with Weierstrass equation $y^2 = f(x)$ one typically uses a Galois equivariant family of functions with zeros of order $p$ at the nontrivial $p$-torsion points and poles of order $p$ at the identity. For example, when $p = 2$ one can take the family of functions $x - \theta$ where $\theta$ runs through the roots of $f(x)$. The Proposition gives a homomorphism $\Pic(E) \to A^\times/A^{\times p}$, where $A$ is the \'etale algebra associated to the $G_K$-set of nontrivial $p$-torsion points. The restriction of this to $\Pic^0(E) = E(K)$ can be identified with connecting homomorphism in the Kummer sequence for $E$ associated to multiplication by $p$.
\end{Remark}

\section{$n$-coverings}
\label{ncoverings}

\begin{Definition}
Let $C$ be a smooth, projective and absolutely irreducible curve defined over $K$ with Jacobian $E$ and $n \ge 2$ prime to the characteristic of $K$. An {\em $n$-covering of $C$} is an \'etale morphism $\pi:D\to C$ of absolutely irreducible curves which is geometrically Galois with group isomorphic to $E[n]$ as a $G_K$-module. Two $n$-coverings are isomorphic if they are isomorphic as $C$-schemes. We denote the set of all $K$-isomorphism classes of $n$-coverings of $C$ defined over $K$ by $\Cov^{(n)}(C/K)$. When $K = k$ is a number field, we define the {\em $n$-Selmer set of $C$}, $\Sel^{(n)}(C/k)$, to be the set of $k$-isomorphism classes of $n$-coverings of $C$ which have points everywhere locally.
\end{Definition}

Geometrically, every $n$-covering of $C$ is obtained by pulling-back the multiplication by $n$ map via a suitable embedding of $C$ into its Jacobian. This follows from geometric class field theory and the fact that there is a unique subgroup of $E(\bar{K})$ isomorphic to $E[n]$. When $C=E$ is an elliptic curve, every $n$-covering of $E$ can be viewed as a twist of the multiplication by $n$ map on $E$. This gives a canonical choice for the trivial $n$-covering of $E$. So, by the twisting principle, $\Cov^{(n)}(E/K)$ is canonically isomorphic to $\HH^1(K,E[n])$ and, as such, carries the structure of an abelian group. More generally, all $n$-coverings of $C$ are twists of one another. So (provided it is nonempty) we may consider $\Cov^{(n)}(C/K)$ as principal homogeneous space for $\HH^1(K,E[n])$ with the action being given by twisting.

The $n$-Selmer set is finite and (at least in principle) computable \cite{ChevalleyWeil}. We refer to its computation as an {\em $n$-descent on $C$}. When $C =E$ is an elliptic curve the $n$-Selmer set is a finite subgroup of $\HH^1(k,E[n])$ and sits in a short exact sequence
\begin{align}
\label{SelSeq}
0 \to E(k)/nE(k) \to \Sel^{(n)}(E/k) \to \Sha(k,E)[n] \to 0\,
\end{align}
(see \cite[Theorem X.4.2]{Silverman}). Classical descent theory (going back to Chevalley-Weil \cite{ChevalleyWeil}) tells us that the $n$-Selmer set yields a finite partition of the rational points on $C$: \[ C(k) = \bigcup_{(D,\pi) \in \Sel^{(n)}(C/k)} \pi(D(k))\,.\] In particular, the emptiness of the $n$-Selmer set is an obstruction to the existence of rational points on $C$. But even when $C(k) \ne \emptyset$, explicitly computing the $n$-Selmer set can be very useful for finding points of large height on $C$ (and ultimately generators of large height on the Jacobian of $C$).

\subsection{Projective models}
Let $C$ be a smooth, projective and absolutely irreducible genus one curve defined over $K$ with Jacobian $E$. For any $K$-rational divisor of degree $n \ge 2$ on $C$, the associated complete linear system gives rise to a morphism from $C$ to $\PP^{n-1}$ defined over $K$. For $n = 2$ this results in a double cover of $\PP^1$ ramified in four points. For $n \ge 3$, the complete linear system yields an embedding $C \hookrightarrow \PP^{n-1}$. The image is called a {\em genus one normal curve of degree $n$}. For $n=3$, the image of $C$ is a plane cubic curve. For larger $n$, the homogeneous ideal of the image is generated by a $K$-vector space of quadrics of dimension $n(n-3)/2$. Two genus one normal curves of degree $n$ are said to be $K$-equivalent if they can be identified by a $K$-automorphism of $\PP^{n-1}$.

More generally, the complete linear system associated to any $K$-rational divisor class of degree $n \ge 2$ on $C$  gives rise to a $K$-morphism $C \to S$ from $C$ to a Brauer-Severi variety $S$ of dimension $n-1$ (see \cite[p. 160]{SerreLF}, \cite[I.1.20]{CFOSS} or \cite[Section 3]{ClarkWC1}). Conversely, any morphism from $C$ to an $n-1$ dimensional Brauer-Severi variety $S$ such that the image is not contained in a linear subvariety gives rise to a $K$-rational divisor class on $C$ by pulling back the class of a hyperplane on $\bar{S} \simeq \PP^{n-1}$.

This leads to the notions of {\em torsor divisor class pairs} and {\em Brauer-Severi diagrams}. The data for a torsor divisor class pair consists of a $K$-torsor $C$ under $E$ and a $K$-rational divisor class of degree $n$ on $C$. The corresponding morphism $C \to S$ is called a Brauer-Severi diagram. In \cite[I, Section 1]{CFOSS} it is shown that, up to appropriate notions of isomorphism, torsor divisor class pairs and Brauer-Severi diagrams are both parameterized by the group $\HH^1(K,E[n])$. Recall that this group also parameterizes $n$-coverings of $E$. If $(C,\rho)$ is an $n$-covering, then there exists an isomorphism $\psi:C\to E$ defined over $\bK$ such that $\rho = n\circ\psi$. This gives $C$ the structure of a $K$-torsor under $E$. The pull-back of $n[0_E]$ by $\psi$ defines a $K$-rational divisor class on $C$. One can show that this gives a torsor divisor class pair, whose class in $\HH^1(K,E[n])$ is the same as that of $(C,\rho)$. Thus the Brauer-Severi diagram corresponding to $(C,\rho)$ is the map $C \to S$ given by the complete linear system associated to the divisor $\psi^*n[0_E]$. This results in a model for $C$ as a genus one normal curve of degree $n$ in $\PP^{n-1}$ if and only if $\psi^*n[0_E]$ is linearly equivalent to some $K$-rational divisor.

Conversely, a genus one normal curve $C$ of degree $n$ determines the class of an $n$-covering $(C,\rho)$ in $\Cov^{(n)}(E/K) = \HH^1(K,E[n])$ up to composition with an automorphism of $E$. The possibilities correspond to the finitely many nonisomorphic structures for $C$ as a $K$-torsor under $E$.

\begin{Definition}
\label{FlexPoints}
A point $x$ on a genus one normal curve of degree $n \ge 3$ is called a {\em flex point} if there is a hyperplane in $\PP^{n}$ meeting $C$ in $x$ with multiplicity $n$.
\end{Definition}

The set of flex points $X$ on $C$ is defined by a geometric property, so it is a $G_K$-set. If $C$ is endowed with the structure of an $n$-covering $\rho:C \to E$, then $X = \rho^{-1}(O_E)$ is the fiber above the identity on $E$, and the corresponding action of $E$ on $C$ restricts to a simply transitive action of $E[n]$ on $X$. This gives $X$ the structure of an $E[n]$-torsor; its class in $\HH^1(K,E[n])$ is the same as that of the $n$-covering $(C,\rho)$ (see \cite[I Section 1]{CFOSS}). It follows also that $\#X = n^2$.

\subsection{The obstruction map}
\label{obstructionmap}
Recall that $\Pic(C)$ is the quotient of the group of $K$-rational divisors on $C$ by the group of $K$-rational principal divisors, while $\Pic(\bar{C})^{G_K}$ is the group of $K$-rational divisor classes. It follows from Hilbert's Theorem 90 that the obvious map $\Pic(C) \to \Pic(\bar{C})^{G_K}$ is injective. In general, however, it is not surjective. To measure this failure one is naturally led to use Galois cohomology. The Picard group is defined by the exact sequence \[ 1 \to \bK^\times \to \kappa(\bar{C})^\times \to \Div(\bar{C}) \to \Pic(\bar{C})\to 0\,.\] Taking Galois invariants, this sequence may no longer be exact. One can deduce an exact sequence \[ 0 \to \Pic(C) \to \Pic(\bar{C})^{G_K} \stackrel{\delta_C}{\To} \Br(K)\,, \] where $\Br(K)$ denotes the Brauer group of $K$. The map $\delta_C$ gives the obstruction to a $K$-rational divisor class being defined by a $K$-rational divisor.

Following \cite{CFOSS} we define the obstruction map
\[ \Ob_n:\HH^1(K,E[n]) \to \Br(K) \] by $\Ob_n(\xi) = \delta_C(\Xi)$, where $(C,\Xi)$ is any torsor divisor class pair representing the class $\xi \in \HH^1(K,E[n])$. From this definition we obtain the fundamental property of the obstruction map that the Brauer-Severi diagram corresponding to an $n$-covering $(C,\rho)$ gives a model for $C$ as a genus one normal curve of degree $n$ in $\PP^{n-1}$ if and only if $\Ob_n((C,\rho))=0$. Conversely, any genus one normal curve $C \to \PP^{n-1}$ of degree $n$, together with a structure of torsor under its Jacobian $E$ determines a unique isomorphism class of $n$-coverings of $E$ with trivial obstruction.

Using a result of Zarhin \cite{Zarhin} O'Neil has shown \cite{O'Neil} that the obstruction map is a quadratic form. This means that, for any integer $a$, $\Ob_n(a\xi) = a^2\Ob_n(\xi)$ and that the pairing \[(\xi,\xi')\mapsto \Ob_n(\xi+\xi')-\Ob_n(\xi)-\Ob_n(\xi')\] is bilinear. The pairing is in fact the cup product associated to the Weil pairing on $E[n]$, i.e. the composition
\[ \cup_n:\HH^1(K,E[n])\times\HH^1(K,E[n]) \stackrel{\cup}{\To} \HH^2(K,E[n]\otimes E[n]) \stackrel{e_n}{\To} \HH^2(K,\mu_n) \simeq \Br(K)[n]\,.\]

Using the compatibility of the Weil pairings of levels $mn$ and $n$ (where $m,n \ge 2$ are integers not divisible by the characteristic of $K$) and the fact that the bilinear form associated to the obstruction map is the Weil pairing cup product one can prove that the following diagram commutes. For details see \cite[Proposition 6]{ClarkSharif}.
\begin{align}
\label{Obmn}
\xymatrix{ \HH^1(K,E[m]) \ar[r]^{i_*}\ar[d]^{\Ob_m}& \HH^1(K,E[mn]) \ar[r]^{m_*}\ar[d]^{\Ob_{mn}}& \HH^1(K,E[n]) \ar[d]^{\Ob_n} \\
	\Br(K)[m] \ar[r]^n & \Br(K)[mn] \ar[r]^{m}& \Br(K)[n] }
\end{align}

Let $C$ be a genus one normal curve of degree $n$ defined over $K$ with Jacobian $E$. We would like to extend the obstruction map to the set $\Cov^{(m)}(C/K)$. To do this, fix a map $\rho:C \to E$ making $C$ into an $n$-covering of $E$. If $(D,\pi) \in \Cov^{(m)}(C/K)$ is an $m$-covering of $C$, then composing the covering maps gives $D$ the structure of an $mn$-covering of $E$. This gives a map, depending on both $C$ and $\rho$, \[ \Psi_\rho:\Cov^{(m)}(C/K) \ni (D,\pi) \mapsto (D,\rho\circ\pi) \in \Cov^{(mn)}(E/K) = \HH^1(K,E[mn])\,.\] 
Composing this with the obstruction map yields a map \[ \Ob_{mn}:\Cov^{(m)}(C/K) \stackrel{\Psi_\rho}{\To} \HH^1(K,E[mn]) \stackrel{\Ob_{mn}}{\Too} \Br(K)\,,\] which we also denote by $\Ob_{mn}$. Since $\rho$ is uniquely determined up to an automorphism of $E$, the set in the following definition depends only on the equivalence class of $C$ as a genus one normal curve of degree $n$ (and not on the additional choice for the torsor structure).

\begin{Definition}
We say that an $m$-covering $\pi:D\to C$ has {\em trivial obstruction} if its image under $\Ob_{mn}$ is trivial. We use
\[ \Cov_0^{(m)}(C/K) := \{ (D,\pi) \in \Cov^{(m)}(C/K) : \Ob_{mn}((D,\pi)) = 0 \} \] to denote the set of isomorphism classes of $m$-coverings of $C$ with trivial obstruction.
\end{Definition}

Our initial interest in this subset of coverings is justified by the following lemma. This is essentially a result of Cassels \cite[Theorem 1.2]{CaIV}. 
\begin{Lemma}
Let $C$ be a genus one normal curve of degree $n$ over a number field $k$. Then for any $m \ge 2$, $\Sel^{(m)}(C/k)$ is contained in $\Cov_0^{(m)}(C/k)$.
\end{Lemma}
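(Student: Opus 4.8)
The plan is to show that $\Ob_{mn}((D,\pi))$ becomes trivial in $\Br(k_v)$ for every place $v$ of $k$, and then to invoke the Hasse principle for the Brauer group. Fix the map $\rho\colon C\to E$ used to define $\Ob_{mn}$ on $\Cov^{(m)}(C/k)$, and let $(D,\pi)\in\Sel^{(m)}(C/k)$. By construction of $\Psi_\rho$, the class $\Psi_\rho((D,\pi))\in\HH^1(k,E[mn])$ is represented by a torsor divisor class pair whose underlying torsor is $D$ (with the structure coming from the $mn$-covering $\rho\circ\pi$) and whose divisor class $\Xi\in\Pic^{mn}(\bD)^{G_k}$ is the one attached to that covering; hence $\Ob_{mn}((D,\pi))=\delta_D(\Xi)$.

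Next I would fix a place $v$ and use the compatibility of $\delta$ with base change: restricting the defining four-term sequence $1\to\bk^\times\to\kappa(\bD)^\times\to\Div(\bD)\to\Pic(\bD)\to0$ along $G_{k_v}\hookrightarrow G_k$ shows $\res_v(\delta_D(\Xi))=\delta_{D_{k_v}}(\Xi_{k_v})$. Since $(D,\pi)$ lies in the Selmer set, $D(k_v)\neq\emptyset$, so $D_{k_v}$ is the trivial torsor under $E_{k_v}$ and is therefore isomorphic to the elliptic curve $E_{k_v}$ over $k_v$. For an elliptic curve over a field every rational divisor class is represented by a rational divisor, so $\delta_{E_{k_v}}=0$ and hence $\res_v(\Ob_{mn}((D,\pi)))=\delta_{D_{k_v}}(\Xi_{k_v})=0$. (Equivalently: the Brauer--Severi diagram $D\to S$ attached to $(D,\Xi)$ acquires a rational point over $k_v$, so $S_{k_v}\cong\PP^{mn-1}_{k_v}$ and the obstruction class, which is $[S]$, dies in $\Br(k_v)$.)

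Since $v$ was arbitrary, $\Ob_{mn}((D,\pi))$ lies in the kernel of $\Br(k)\to\bigoplus_v\Br(k_v)$, which is trivial by the Albert--Brauer--Hasse--Noether theorem. Therefore $\Ob_{mn}((D,\pi))=0$, i.e.\ $(D,\pi)\in\Cov_0^{(m)}(C/k)$, which is the desired inclusion $\Sel^{(m)}(C/k)\subseteq\Cov_0^{(m)}(C/k)$. I expect the one point needing care to be the base-change compatibility $\res_v\circ\delta_D=\delta_{D_{k_v}}\circ(\text{localization})$, which requires being slightly careful about the relation between $\bk$ and $\overline{k_v}$ in the Hochschild--Serre mechanism producing $\delta_D$; everything else is standard, and the argument is in essence that of Cassels.
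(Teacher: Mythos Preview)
Your argument is correct and is essentially the same as the paper's: you show the obstruction vanishes locally because $D(k_v)\neq\emptyset$ forces $\delta_{D_{k_v}}=0$, and then invoke the Hasse principle for $\Br(k)$. The only cosmetic difference is that the paper phrases the local step as ``$D(K)\neq\emptyset\Rightarrow\Pic(D)=\Pic(\bD)^{G_K}$'' for an arbitrary curve, whereas you route through the identification $D_{k_v}\simeq E_{k_v}$; both amount to the same observation.
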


\begin{Proof}
If $D$ is a smooth, projective and absolutely irreducible curve over a field $K$ and $D(K)\ne \emptyset$, then $\Pic(D)= \Pic(\bD)^{G_K}$. From the exact sequence $\Pic(D)\to\Pic(\bD)^{G_K}\stackrel{\delta_D}\To\Br(K)$ it follows that an $m$-covering $\pi:D \to C$ has trivial obstruction if $D(K) \ne \emptyset$. If $K=k$ is a number field and $D$ is everywhere locally solvable, then this is the case everywhere locally. The local global principle for the Brauer group of $k$ then implies that $\Pic(D)=\Pic(\bD)^{G_k}$. In particular, the elements of the $m$-Selmer set of $C$ (resp. $m$-Selmer group if $C = E$) have trivial obstruction.
\end{Proof}

We now consider the case when $m = n$. When $\Cov^{(n)}(C/K)$ is nonempty, it is a principal homogeneous space for $\HH^1(k,E[n])$. The action of a class represented by a cocycle $\xi$ on a covering $D$ is given by twisting. We use $D_\xi$ to denote the twist of $D$ by $\xi$. Both $D$ and $\xi$ have canonical images in $\HH^1(k,E[n^2])$ and the action of twisting coincides with the group law there. Namely, the image of $D_\xi$ is the sum of the images of $D$ and $\xi$. The following lemma identifies how the obstruction changes under this action.

\begin{Lemma}
\label{Obisaffine}
For $D \in \Cov^{(n)}(C/K)$ and $\xi \in \HH^1(K,E[n])$ we have \[\Ob_{n^2}(D_\xi)= C \cup_n \xi + \Ob_{n^2}(D)\,,\] where $\cup_n$ denotes the cup product associated to the Weil pairing of level $n$.
\end{Lemma}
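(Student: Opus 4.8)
The plan is to transport everything into $\HH^1(K,E[n^2])$ via the map $\Psi_\rho$ and to exploit O'Neil's theorem that $\Ob_{n^2}$ is a quadratic form whose associated bilinear form is the level-$n^2$ Weil pairing cup product $\cup_{n^2}$. So I would first fix the $n$-covering $\rho\colon C\to E$ used to define $\Ob_{n^2}$ on $\Cov^{(n)}(C/K)$, so that by definition $\Ob_{n^2}(D)=\Ob_{n^2}(\Psi_\rho(D))$ and $\Ob_{n^2}(D_\xi)=\Ob_{n^2}(\Psi_\rho(D_\xi))$. As recalled in the paragraph preceding the lemma, under the canonical maps $\Psi_\rho\colon\Cov^{(n)}(C/K)\to\HH^1(K,E[n^2])$ and $i_*\colon\HH^1(K,E[n])\to\HH^1(K,E[n^2])$ (the latter induced by $E[n]\hookrightarrow E[n^2]$) the twisting action becomes translation, i.e. $\Psi_\rho(D_\xi)=\Psi_\rho(D)+i_*\xi$. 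Writing $q=\Ob_{n^2}$ and using that $q$ is quadratic with polar form $\cup_{n^2}$ then gives
\[\Ob_{n^2}(D_\xi)=q\bigl(\Psi_\rho(D)+i_*\xi\bigr)=\Ob_{n^2}(D)+q(i_*\xi)+\bigl(\Psi_\rho(D)\cup_{n^2}i_*\xi\bigr),\]
and it remains only to verify the two identities $q(i_*\xi)=0$ and $\Psi_\rho(D)\cup_{n^2}i_*\xi=C\cup_n\xi$.

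For the first, the left-hand square of diagram \eqref{Obmn} (taken with $m=n$) gives $q(i_*\xi)=\Ob_{n^2}(i_*\xi)=n\cdot\Ob_n(\xi)$, which vanishes since $\Ob_n$ takes values in $\Br(K)[n]$ (a fact of O'Neil; alternatively it can be extracted from \eqref{Obmn} by chasing the diagram with coprime parameters). For the second identity, the key input is the compatibility of the Weil pairings of levels $n$ and $n^2$: for $x\in E[n^2]$ and $y\in E[n]$ one has $e_{n^2}(x,i(y))=e_n(nx,y)$, and unwinding the definition of $\cup_{n^2}$ this yields $a\cup_{n^2}i_*b=(n_*a)\cup_n b$ in $\Br(K)$ for all $a\in\HH^1(K,E[n^2])$ and $b\in\HH^1(K,E[n])$, where $n_*\colon\HH^1(K,E[n^2])\to\HH^1(K,E[n])$ is as in \eqref{Obmn} (compare \cite[Proposition 6]{ClarkSharif}). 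Taking $a=\Psi_\rho(D)$ and $b=\xi$ reduces the claim to the geometric identity $n_*\Psi_\rho(D)=C$ in $\HH^1(K,E[n])$, i.e. that reducing the $n^2$-covering $D\to E$ modulo $n$ recovers the $n$-covering $\rho\colon C\to E$ — which is precisely the meaning of the symbol $C$ in the statement.

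I would prove this last identity by a short cocycle computation: choose $\bK$-isomorphisms $\theta\colon\bD\to\bE$ and $\psi\colon\bC\to\bE$ with $\rho\circ\pi=[n^2]\circ\theta$, $\rho=[n]\circ\psi$, and (after adjusting $\psi$ by an $n$-torsion translation if necessary) $\psi\circ\pi=[n]\circ\theta$. Applying $\sigma\in G_K$ and using that $\pi$ is defined over $K$, one finds that the $E[n]$-valued cocycle $\sigma\mapsto\psi^\sigma\circ\psi^{-1}$ representing $C$ as an $n$-covering of $E$ equals $n$ times the $E[n^2]$-valued cocycle $\sigma\mapsto\theta^\sigma\circ\theta^{-1}$ representing $\Psi_\rho(D)$; that is, $n_*\Psi_\rho(D)=C$. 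Combining the three computations gives $\Ob_{n^2}(D_\xi)=\Ob_{n^2}(D)+0+C\cup_n\xi$, as claimed.

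The only genuinely delicate point is the second identity $\Psi_\rho(D)\cup_{n^2}i_*\xi=C\cup_n\xi$: keeping the two levels $n$ and $n^2$ straight when unwinding $\cup_{n^2}$ against the Weil-pairing compatibility (and being careful about sign conventions) is where the bookkeeping lives. Everything else — the reduction via $\Psi_\rho$, the vanishing of $q(i_*\xi)$, and the passage $n_*\Psi_\rho(D)=C$ — is formal, resting on diagram \eqref{Obmn}, the quadratic-form structure of the obstruction map, and a routine cocycle manipulation.
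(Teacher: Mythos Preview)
Your proof is correct and follows essentially the same approach as the paper: both arguments push everything into $\HH^1(K,E[n^2])$, expand the quadratic form $\Ob_{n^2}$ via its polar form $\cup_{n^2}$, kill the $\Ob_{n^2}(i_*\xi)$ term using diagram~\eqref{Obmn}, and reduce the remaining cup product to $C\cup_n\xi$ via the Weil-pairing compatibility $e_{n^2}(S,T)=e_n(nS,T)$ together with $n_*\Psi_\rho(D)=C$. The only difference is that you spell out the cocycle verification of $n_*\Psi_\rho(D)=C$, which the paper leaves implicit.
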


\begin{Proof}
For the proof, we identify $D$, $D_\xi$ and $\xi$ with their images in $\HH^1(K,E[n^2])$.
We know that $\Ob_{n^2}$ is quadratic and that the associated bilinear form is given by $\cup_{n^2}$. This means that \[ D \cup_{n^2} \xi = \Ob_{n^2}(D_\xi) - \Ob_{n^2}(D) - \Ob_{n^2}(\xi)\,.\]
The compatibility of the obstruction maps of different levels demonstrated by (\ref{Obmn}) shows that $\Ob_{n^2}(\xi)=0$. On the other hand, the Weil pairings of levels $n^2$ and $n$ satisfy the compatibility condition (see \cite[III.8]{Silverman}): \[ \text{for all } S\in E[n^2] \text{ and } T \in E[n]\,,\,\, e_{n^2}(S,T)=e_n(nS,T)\,.\] For the cup product on the left-hand side above this means
\[ D \cup_{n^2} \xi= (n_*D) \cup_n \xi = C \cup_n \xi\,, \] which completes the proof.
\end{Proof}

We use $C^\perp$ to denote the annihilator of $C$ with respect to $\cup_n$, i.e. 
\[ C^\perp = \{ \xi \in \HH^1(K,E[n])\,:\,C\cup_n \xi = 0\,\}\,. \]
\begin{Corollary}
\label{Covphs}
The set $\Cov_0^{(n)}(C/K)$ is either empty or is a principal homogeneous space for $C^\perp \subset \HH^1(K,E[n])$.
\end{Corollary}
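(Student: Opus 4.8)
The plan is to leverage the two results immediately preceding the corollary. Assume $\Cov_0^{(n)}(C/K)$ is nonempty and pick some $D_0 \in \Cov_0^{(n)}(C/K)$; I would use it as a basepoint. The preceding discussion tells us that $\Cov^{(n)}(C/K)$ (which we know contains $D_0$, hence is nonempty) is a principal homogeneous space for $\HH^1(K,E[n])$, the action being twisting: every $D \in \Cov^{(n)}(C/K)$ is uniquely $D_0{}_\xi$ for some $\xi \in \HH^1(K,E[n])$. So the map $\xi \mapsto (D_0)_\xi$ is a bijection $\HH^1(K,E[n]) \to \Cov^{(n)}(C/K)$, and it remains only to check that under this bijection the subset $\Cov_0^{(n)}(C/K)$ corresponds exactly to a coset of $C^\perp$.

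For this I would apply Lemma \ref{Obisaffine} with $D = D_0$: for every $\xi \in \HH^1(K,E[n])$,
\[ \Ob_{n^2}\bigl((D_0)_\xi\bigr) = C \cup_n \xi + \Ob_{n^2}(D_0)\,. \]
Thus $(D_0)_\xi$ has trivial obstruction if and only if $C \cup_n \xi = -\Ob_{n^2}(D_0)$. Since $D_0$ itself has trivial obstruction, $\Ob_{n^2}(D_0) = 0$, so the condition becomes simply $C \cup_n \xi = 0$, i.e. $\xi \in C^\perp$. Hence the bijection above restricts to a bijection $C^\perp \to \Cov_0^{(n)}(C/K)$, $\xi \mapsto (D_0)_\xi$. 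Since $C^\perp$ is a subgroup of $\HH^1(K,E[n])$ (it is the kernel of the homomorphism $\xi \mapsto C \cup_n \xi$, by bilinearity of the cup product) acting freely and transitively on $\Cov_0^{(n)}(C/K)$ via twisting, this exhibits $\Cov_0^{(n)}(C/K)$ as a principal homogeneous space for $C^\perp$.

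One should check that the restricted twisting action is well-defined, i.e. that for $D \in \Cov_0^{(n)}(C/K)$ and $\xi \in C^\perp$ the twist $D_\xi$ again lies in $\Cov_0^{(n)}(C/K)$ — but this is exactly what the displayed formula gives, since $\Ob_{n^2}(D_\xi) = C \cup_n \xi + \Ob_{n^2}(D) = 0 + 0 = 0$. The only mild subtlety, and the point I would be most careful about, is the interplay between the $\HH^1(K,E[n])$-torsor structure on $\Cov^{(n)}(C/K)$ and the images in $\HH^1(K,E[n^2])$ used to define $\Ob_{n^2}$ and to state Lemma \ref{Obisaffine}; but the lemma is stated in precisely the form needed, so no further work is required beyond invoking it. In short, this corollary is essentially a formal consequence of Lemma \ref{Obisaffine} together with the torsor structure on $\Cov^{(n)}(C/K)$.
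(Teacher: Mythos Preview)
Your proof is correct and follows essentially the same approach as the paper's own proof, which simply notes that the corollary is clear from Lemma \ref{Obisaffine} together with the compatibility of the twisting action with the group law in $\HH^1(K,E[n^2])$. You have spelled out the details carefully, but the underlying idea is identical.
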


\begin{Proof}
This is clear from the lemma and the fact that the action of $\HH^1(K,E[n])$ on $\Cov^{(n)}(C/K)$ is compatible with the group law in $\HH^1(K,E[n^2])$
\end{Proof}

The following lemma gives an alternative characterization of $\Cov_0^{(n)}(C/K)$ which will be fundamental to our construction of the descent map in Section \ref{TheDescentMap}.
\begin{Lemma}
\label{pbflex}
Let $C$ be a genus one normal curve of degree $n$ with set of flex points $X$ and let $(D,\pi) \in \Cov^{(n)}(C/K)$. Then $(D,\pi)$ has trivial obstruction if and only if there exists a model for $D$ as a genus one normal curve of degree $n^2$ in $\PP^{n^2-1}$ defined over $K$ with the property that the pull-back of any $x \in X$ by $\pi$ is a hyperplane section.
\end{Lemma}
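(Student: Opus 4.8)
The plan is to reduce the statement to an identity between $K$-rational divisor classes on $D$ and then invoke the definition of the obstruction map from Section \ref{obstructionmap}. Over $\bK$, fix isomorphisms $\psi_C : C \to E$ with $\rho = [n]\circ\psi_C$ and $\psi_D : D \to E$ with $\rho\circ\pi = [n^2]\circ\psi_D$; these exist because $(C,\rho)$ is an $n$-covering and $(D,\rho\circ\pi)$ an $n^2$-covering of $E$. Writing $L_D := \psi_D^*(n^2[0_E]) \in \Pic(\bD)^{G_K}$ for the associated $K$-rational divisor class of degree $n^2$, one has by definition that $\Ob_{n^2}((D,\pi)) = \delta_D(L_D)$, and this vanishes if and only if $L_D$ is represented by a $K$-rational divisor; in that case the complete linear system $|L_D|$ (of projective dimension $n^2-1\ge 3$) realizes $D$ as a genus one normal curve of degree $n^2$ in $\PP^{n^2-1}$ over $K$, whose hyperplane sections are exactly the effective divisors in $|L_D|$. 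So the lemma will follow once I show: for every flex $x\in X$, the effective divisor $\pi^*[x]$, which has degree $\deg\pi = n^2$, lies in the class $L_D$ over $\bK$.

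To prove this, after replacing $\psi_C$ by a translate by a point of $E[n]$ (which changes neither $\rho$ nor the property $\psi_C(X)=E[n]$) I may assume $\psi_C\circ\pi = [n]\circ\psi_D$: indeed $[n]\circ(\psi_C\circ\pi) = \rho\circ\pi = [n]\circ([n]\circ\psi_D)$, so the two maps $D\to E$ differ by a constant translation by a point of $\ker[n] = E[n]$, which can be absorbed into $\psi_C$. Now fix $x\in X$; since $\rho(x)=0_E$, the point $T_x := \psi_C(x)$ lies in $E[n]$, and choosing $S_0\in E(\bK)$ with $nS_0 = T_x$ gives $\pi^{-1}(x) = \psi_D^{-1}\big([n]^{-1}(T_x)\big) = \{\psi_D^{-1}(S_0+V) : V\in E[n]\}$, whence $\pi^*[x] = \psi_D^*\big(\sum_{V\in E[n]}[S_0+V]\big)$. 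In $\Pic(\bar E)$ the degree-zero class $\sum_{V\in E[n]}[S_0+V] - n^2[0_E]$ corresponds, under $\Pic^0(\bar E)\cong E(\bK)$, to $\sum_{V\in E[n]}(S_0+V) = n^2S_0 = n\,T_x = 0$, using that the sum of all $n$-torsion points of $E$ is $0$. Hence $\sum_{V\in E[n]}[S_0+V]\sim n^2[0_E]$, and applying $\psi_D^*$ yields $\pi^*[x]\sim L_D$.

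Both implications then follow formally. If $(D,\pi)$ has trivial obstruction, then $L_D$ is represented by a $K$-rational divisor, $|L_D|$ embeds $D$ as a genus one normal curve of degree $n^2$ in $\PP^{n^2-1}$ over $K$, and by the previous step each $\pi^*[x]\in|L_D|$ is a hyperplane section. Conversely, suppose $D$ admits a model as a genus one normal curve of degree $n^2$ over $K$ in which every $\pi^*[x]$ is a hyperplane section, and let $L\in\Pic(\bD)^{G_K}$ be the class of a hyperplane section of this model; then $\delta_D(L)=0$, because $L$ is represented by a $K$-rational divisor, namely the intersection of $D$ with a $K$-rational hyperplane. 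On the other hand $L$ is the class of $\pi^*[x]$ for any flex $x$, so $L\sim L_D$ over $\bK$ by the previous step; since both classes are $G_K$-invariant they coincide in $\Pic(\bD)^{G_K}$, and therefore $\Ob_{n^2}((D,\pi)) = \delta_D(L_D) = \delta_D(L) = 0$.

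The only real content is the middle paragraph: one must follow the flex point $x$ through the two covering maps to see that $\pi^*[x]$ is the $\psi_D$-preimage of a coset of $E[n]$, and then observe that it is precisely the $n$-torsion-ness of $T_x=\psi_C(x)$ — i.e. the fact that $x$ is a flex of $C$ — that forces $\pi^*[x]$ into the distinguished class $L_D$ rather than into some twist of it. The residual ambiguity in the choice of covering isomorphisms is harmless, since it only shifts things by points of $E[n]$ or $E[n^2]$, which leaves both $L_D$ and the class of $\pi^*[x]$ unchanged; the rest is bookkeeping with the properties of the obstruction map recalled in Section \ref{obstructionmap}.
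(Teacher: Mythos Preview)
Your proof is correct and follows essentially the same approach as the paper: both reduce to the geometric claim that $\pi^*[x] \sim \psi_D^*(n^2[0_E])$ for every flex $x$, and both verify this via the criterion that divisors on an elliptic curve are linearly equivalent if and only if they have the same degree and the same sum. Your version is a bit more explicit about arranging the compatibility $\psi_C\circ\pi = [n]\circ\psi_D$ and about computing the sum $\sum_{V\in E[n]}(S_0+V)=n^2S_0=nT_x=0$, but the argument is the same.
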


\begin{Proof}
Fix isomorphisms $\psi_D:D \to E$ and $\psi_C:C \to E$ (defined over $\bK$) and a covering map $\rho:C \to E$ such that the diagram \[ \xymatrix{ D \ar[d]_{\psi_D}\ar[r]^\pi& C \ar[d]_{\psi_C} \ar[r]^\rho& E \ar@{=}[d]\\
E\ar[r]_{n}& E\ar[r]_{n}& E }\] commutes. Then $X = \rho^{-1}(0_E)$. By definition $(D,\pi)$ has trivial obstruction if and only if $\psi_D^*(n^2[0_E])$ is linearly equivalent to some $K$-rational divisor. On the other hand, $D$ admits a model as in the statement of the lemma if and only if $\pi^*[x]$ is linearly equivalent to some $K$-rational divisor, for each $x \in X$. It thus suffices to show, for all $x \in X$, that $\psi_D^*(n^2[0_E])$ and $\pi^*[x]$ are linearly equivalent. For this we may work geometrically. The problem is then equivalent to showing that for any $n$-torsion point $P \in E[n]$, the pull-back of $P$ under the multiplication by $n$ isogeny is linearly equivalent to $n^2[0_E]$. This follows from the well-known fact that two divisors on an elliptic curve are linearly equivalent if and only if they have the same degree and the same sum. Indeed, the divisors in question both have degree $n^2$ and sum to $0_E$ in the group $E(\bK)$.
\end{Proof}

\subsection{Composite coverings}
\label{compcov}
Let $m,n \ge 2$ be integers not divisible by the characteristic of $K$ and let $(C,\rho)$ be an $n$-covering of its Jacobian $E$. There is a short exact sequence:

\begin{align}
\label{mn1}
0 \to E[m] \to E[mn] \stackrel{m}\to E[n] \to 0
\end{align}
This gives rise to an exact sequence of Galois cohomology groups:
\begin{align}
\label{relatemn}
0 \to \frac{E(K)[n]}{mE(K)[mn]} \rightarrow \HH^1(K,E[m]) \stackrel{i_*}\rightarrow \HH^1(K,E[mn])
\stackrel{m_*}\rightarrow \HH^1(K,E[n])\,.
\end{align} Recall the map $\Psi_\rho:\Cov^{(m)}(C/K) \to \HH^1(K,E[mn])$ given by composing covering maps.

\begin{Lemma}
The image of the composition \[ \Cov^{(m)}(C/K)  \stackrel{\Psi_\rho}\To \HH^1(K,E[mn]) \To \HH^1(K,E)[mn] \] is equal to $\{ \, D \in \HH^1(K,E)[mn] \,:\, mD = C \,\}\,.$ Suppose further that $K$ is a number field and $C$ is everywhere locally solvable. Then the image of the composition \[ \Sel^{(m)}(C/K)  \stackrel{\Psi_\rho}\To \Sel^{(mn)}(E/k) \To \Sha(E/k)[mn] \] is equal to $\{ \, D \in \Sha(E/k)[mn] \,:\, mD = C \,\}\,.$
\end{Lemma}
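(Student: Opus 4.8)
The plan is to prove the two statements together, deducing the second from the first, and the key is to unwind two compatibilities that are already operative in the preceding sections. The first is that the natural map $\HH^1(K,E[N])\to\HH^1(K,E)$ sends the class of an $N$-covering $(D,\phi)$ of $E$ to the Weil--Ch\^atelet class of the torsor $D$ underlying it; in particular this image depends only on the curve $D$, not on the chosen covering map $\phi$. The second is that $m_*\colon\HH^1(K,E[mn])\to\HH^1(K,E[n])$ carries the class of an $mn$-covering $\phi\colon D\to E$ to the class of its unique intermediate $n$-covering $D/E[m]\to E$ --- this is precisely the identity $n_*D=C$ invoked in the proof of Lemma \ref{Obisaffine}, with the roles of $m$ and $n$ now distinct. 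Finally, the obvious morphism from the Kummer sequence for $[mn]$ to the Kummer sequence for $[n]$ (identity on the middle term, multiplication by $m$ on the outer terms) yields, among its commutative squares, the assertion that for $\delta\in\HH^1(K,E[mn])$ with image $\bar\delta\in\HH^1(K,E)$ the class $m\bar\delta$ equals the image in $\HH^1(K,E)$ of $m_*\delta$.

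First I would prove the inclusion ``$\subseteq$''. Given $(D,\pi)\in\Cov^{(m)}(C/K)$, set $\delta=\Psi_\rho(D,\pi)=[D,\rho\circ\pi]$, with image $\bar D\in\HH^1(K,E)$. Then $\bar D$ is automatically killed by $mn$. Since the factorisation $D\xrightarrow{\pi}C\xrightarrow{\rho}E$ exhibits $(C,\rho)$ as the intermediate $n$-covering of $(D,\rho\circ\pi)$, we get $m_*\delta=[C,\rho]$, and feeding this into the commutative square gives $m\bar D=C$ in $\HH^1(K,E)$, as required. In the number-field case, if furthermore $(D,\pi)\in\Sel^{(m)}(C/k)$ then $D$ has points everywhere locally, so $\bar D$ is locally trivial, i.e. $\bar D\in\Sha(E/k)[mn]$ --- note this also shows $\Psi_\rho$ sends $\Sel^{(m)}(C/k)$ into $\Sel^{(mn)}(E/k)$ --- and the same computation gives $m\bar D=C$ now inside $\Sha(E/k)$, using that $C$ is everywhere locally solvable.

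Next, the inclusion ``$\supseteq$''. Let $\bar D\in\HH^1(K,E)[mn]$ with $m\bar D=C$. Using the Kummer sequence for $[mn]$, lift $\bar D$ to some $\delta\in\HH^1(K,E[mn])$ and realise $\delta$ by an $mn$-covering $\phi\colon D\to E$. Let $(C',\rho')$ be its intermediate $n$-covering, so $[C',\rho']=m_*\delta$; by the commutative square the torsor $C'$ has Weil--Ch\^atelet class $m\bar D=C$, hence there is a $K$-isomorphism $\lambda\colon C'\to C$ of torsors under $E$. Then $\pi:=\lambda\circ(D\to C')$ is an $m$-covering of $C$, and the image of $(D,\pi)$ under the composition is the Weil--Ch\^atelet class of the curve $D$ --- which, the covering map being irrelevant, is exactly $\bar D$. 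This proves the first statement; for the Selmer version one runs the same construction over $k$ starting from $\bar D\in\Sha(E/k)[mn]$, and then the torsor $D$ has class $\bar D\in\Sha(E/k)$, hence is everywhere locally solvable, so $(D,\pi)\in\Sel^{(m)}(C/k)$ and $\Psi_\rho(D,\pi)$ maps to $\bar D$.

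The only delicate point --- the expected obstacle --- is the ``$\supseteq$'' construction: one is tempted to try to choose the lift $\delta$ so that its intermediate $n$-covering is literally $(C,\rho)$, but the available freedom in $m_*\delta$ is only a coset of the image of $mE(K)$ under the $n$-descent connecting map, not of all of $E(K)$, so in general $\rho'$ cannot be matched to $\rho$ on the nose. The resolution is to observe that this is harmless: an $m$-covering of $C$ requires only an \'etale $K$-morphism to the curve $C$ with the correct geometric Galois group, and the map into $\HH^1(K,E)$ forgets everything but the underlying torsor, so transporting $(C',\rho')$ to $C$ along any torsor isomorphism $\lambda$ affects neither membership in $\Cov^{(m)}(C/K)$ (resp. $\Sel^{(m)}(C/k)$) nor the final image $\bar D$.
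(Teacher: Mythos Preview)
Your argument is correct. The paper actually states this lemma without proof, relegating the relevant observations to the surrounding Remark about the exact sequence~(\ref{relatemn}); your write-up supplies exactly the details one would expect, namely the compatibility of $m_*$ with passage to the intermediate $n$-covering and of the map to $\HH^1(K,E)$ with forgetting the covering structure.

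One small comment on the ``delicate point'' you flag at the end: you are right that one cannot in general arrange $m_*\delta=[C,\rho]$ on the nose, and your resolution via a torsor isomorphism $\lambda\colon C'\to C$ is fine. It is perhaps worth noting that this is consistent with the paper's setup: the definition of $\Cov^{(m)}(C/K)$ depends only on $C$ (not on $\rho$), and the map $\Psi_\rho$ composed further to $\HH^1(K,E)$ is likewise independent of $\rho$, so the ambiguity in matching $\rho'$ to $\rho$ is invisible at both ends of the composition whose image you are computing.
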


\begin{Remark}
From the exactness in (\ref{relatemn}) one sees that the fibers of $\Psi_\rho$ are parameterized by the finite group $\frac{E(K)[n]}{mE(K)[mn]}$. In this way one reduces the study of $mn$-coverings of $E$ to the study of $m$-coverings of the $n$-coverings of $E$. When $m$ and $n$ are relatively prime, the sequence (\ref{mn1}) splits and so $\HH^1(K,E[mn]) \simeq \HH^1(K,E[m]) \times \HH^1(K,E[n])$. In this way one can further reduce the problem to the study of $n$-coverings where $n$ is a prime power. In particular, to do an $n^2$-descent on an elliptic curve $E$ for some square free $n$ it suffices to do $p$-descents on $E$ and then compute the $p$-Selmer sets of the elements of the $p$-Selmer group for the primes $p$ dividing $n$.
\end{Remark}

\subsection{Notation for the sequel}
\label{Notation2}
In the remainder of this paper we specialize to the following situation. We consider a $p$-covering $\rho:C\to E$ where $p$ is an odd prime and, unless expressly stated otherwise, make the following assumptions on $C$.
\begin{itemize}
\item $\Pic(C) = \Pic(\bC)^{G_K}$, i.e. every $K$-rational divisor class can be represented by some $K$-rational divisor.
\item $\Cov^{(p)}(C/K) \ne \emptyset$, i.e. there exists a $p$-covering of $C$ defined over $K$.
\end{itemize}
 
These assumptions are satisfied when $K$ is a number field and $C$ is everywhere locally solvable. The first is a result of Cassels \cite[Theorem 1.2]{CaIV}; it is a consequence of the local-global principle for the Brauer group of $K$. The second is a result of Tate (appearing in the same article of Cassels, Lemma 6.1). It is ultimately a consequence of the local-global principle for $\HH^1(K,E[p])$.

From the first assumption above it follows that $(C,\rho)$ has trivial obstruction. The Brauer-Severi diagram corresponding to $(C,\rho)$ gives a model for $C$ as a genus one normal curve of degree $p$ in $\PP^{p-1}$. We fix defining equations of the following form.  For $p=3$, $C\subset \PP^2$ is defined by the vanishing of some ternary cubic form $U(u_1,u_2,u_3) \in K[u_1,u_2,u_3]$. For larger $p$, the model is as a (noncomplete) intersection of $p(p-3)/2$ quadrics $Q_i(u_1,\dots,u_p) \in K[u_1,\dots,u_p]$. We denote the $G_K$-set of flex points on $C$ by $X$.  The corresponding \'etale $K$-algebra, $\Map_K(X,\bK)$ will be denoted by $F$ (for {\em flex algebra}).

\section{Affine structure}
\label{AffineStructure}
\subsection{Affine Maps}
\label{affinemaps}
We maintain the notation laid out in Section \ref{Notation2}. Since $X$ is a $K$-torsor under $E[p]$, we may consider it to be the affine space underlying the $2$-dimensional $\F_p$-vector space $E[p]$. The action of $G_K$ on $X$ factors through the affine general linear group, which is an extension of the general linear group by the group of translations:
\[ 1 \to E[p] \to \AGL(X) \to \GL(E[p]) \to 1\,. \]
Here $E[p]$ acts on $X$ by translations and $\GL(E[p])$ acts on $E[p]$ in the obvious way.

In general, if $V,W$ are vector spaces and $\A$ denotes the affine space underlying $V$, then a map $\phi: \A \to W$ is said to be affine if, for all $x \in \A$ and $P,Q \in V,$ one has \[ \phi(x+P+Q)+\phi(x) = \phi(x+P)+\phi(x+Q)\,.\] Geometrically, this says that the sums of the values of $\phi$ on the two pairs of opposite vertices of any parallelogram in $\A$ are equal. In characteristic different from $2$ it is easy to check that a map is affine if and only if this condition is satisfied for all degenerate parallelograms where one pair of opposite vertices coincide. In other words, 
\begin{align}
\label{affinecond}
\text{$\phi$ is affine} \Longleftrightarrow \forall\,x \in \A\,, P \in V,\,\, \phi(x+P) + \phi(x - P) =  2\phi(x)\,.
\end{align}

We define $\Aff(\A,W)$ to be the vector space of affine maps from $\A$ to $W$. Given an affine map $\phi\in \Aff(\A,W)$ and $x \in \A$, we can obtain a linear map $\Lambda_{\phi,x}:V \to W$ by projecting onto the linear part. This is defined by $\Lambda_{\phi,x}(P)=\phi(x+P) - \phi(x)$. One can easily check that $\Lambda_{\phi,x}$ is linear and does not depend on the choice for $x$. This gives rise to a surjective linear map $\Aff(\A,W) \ni \phi \mapsto \Lambda_{\phi,x} \in \Hom(V,W)$, which fits in an exact sequence
\[ 0 \to W \to \Aff(\A,W) \to \Hom(V,W) \to 0\,.\]

Now return to the case $V = E[p]$ and $\A = X$. We consider $\mu_p$ as an $\F_p$-vector space written multiplicatively. It naturally embeds in $\Aff(X,\mu_p)$ as the subspace of constant maps. The group $\Aff(X,\mu_p)$ itself may be identified with a subgroup of $\Map(X,\bK)$. As such it inherits a natural action of $G_K$. We have a short exact sequence of $G_K$-modules
\begin{align}
\label{Affseq}
1 \to \mu_p \to \Aff(X,\mu_p) \to \Hom(E[p],\mu_p) \to 0 \,.
\end{align} The $G_K$-module $E[p]$ is self-dual via the Weil pairing. Namely, we can identify $E[p]$ with $\Hom(E[p],\mu_p)$ via \[ E[p] \ni P \mapsto e_p(P,-) \in \Hom(E[p],\mu_p)\,.\] 
\vspace{1mm}
\begin{Remark}
Alternatively, one can make this identification using $P \leftrightarrow e_p(-,P)$. Since the Weil pairing is alternating, the two differ by a sign. This controls the factor of $-1$ in the next lemma. We have made our choice in deference to the formulation of Proposition \ref{Phiphs} below.
\end{Remark}

Making this identification in the exact sequence (\ref{Affseq}) above and taking Galois cohomology we obtain an exact sequence 
\begin{align}
\label{DefUps}
\HH^1(K,\mu_p) \to \HH^1(K,\Aff(X,\mu_p)) \to \HH^1(K,E[p]) \stackrel{\Upsilon}{\To} \Br(K)[p]\,.
\end{align} Here we have also identified $\HH^2(K,\mu_p)$ with the $p$-torsion in the Brauer group of $K$. Recall that $C^\perp$ is the subgroup $\{\, \xi \in \HH^1(K,E[p])\,:\, C \cup_p \xi = 0 \,\}$. The next lemma identifies this with the kernel of $\Upsilon$.

\begin{Lemma}
\label{cup}
$\Upsilon(\xi) = -C \cup_p \xi$.
\end{Lemma}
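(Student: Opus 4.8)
The plan is to identify the connecting map $\Upsilon$ coming from the exact sequence (\ref{Affseq}) with (minus) the cup product with $C$, by unwinding the cohomological definitions of both sides and comparing cocycles. The key observation is that the extension class of (\ref{Affseq}) in $\Ext^1_{G_K}(E[p],\mu_p)$ is (up to the sign introduced by our identification $E[p]\cong\Hom(E[p],\mu_p)$) the class associated to the Weil pairing, and the connecting homomorphism induced by pushing out along this extension is precisely cup product with that class. So the heart of the argument is: (i) make explicit a $G_K$-equivariant set-theoretic section of the surjection $\Aff(X,\mu_p)\to\Hom(E[p],\mu_p)\cong E[p]$ — a natural choice is to fix a point $x_0\in X$ and send $P\in E[p]$ to the affine map $x\mapsto e_p(P, x - x_0)$, where $x-x_0\in E[p]$ makes sense because $X$ is an $E[p]$-torsor; (ii) compute the coboundary of this section, which measures the failure of $G_K$-equivariance and lands in $\mu_p$; and (iii) recognize that this coboundary, paired against a cocycle representing $\xi\in\HH^1(K,E[p])=\Cov^{(p)}(E/K)$, reproduces the cup-product cocycle for $C\cup_p\xi$.

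First I would recall that, by the material of Section \ref{ncoverings}, the class of $C$ in $\HH^1(K,E[p])$ is represented by the cocycle $\sigma\mapsto c_\sigma := \psi_C(x_0)^\sigma - \psi_C(x_0^{\sigma})$ arising from the torsor structure of $X$ (equivalently, the cocycle describing $X$ as an $E[p]$-torsor), for a fixed $\bK$-isomorphism $\psi_C\colon C\to E$; concretely, writing the torsor action additively, $x_0^\sigma = x_0 + c_\sigma$ where $c_\sigma$ depends on the choice of $x_0$ but its class does not. Next, for the chosen section $s(P)\colon x\mapsto e_p(P,x-x_0)$, a direct computation gives $s(P)^\sigma - s(P^\sigma)$ as the constant map with value $e_p(P^\sigma, c_\sigma)$ (or its inverse, depending on which variable of $e_p$ carries the torsor shift — this is where the sign in the lemma is pinned down, consistent with the Remark preceding it). Thus the $2$-cocycle representing the image of $\xi$ under $\Upsilon$, for $\xi$ represented by $\tau\mapsto \xi_\tau\in E[p]$, is $(\sigma,\tau)\mapsto e_p(\xi_\tau^{\,?}, c_\sigma)^{\pm 1}$, which is exactly (up to sign and the standard identification $\HH^2(K,\mu_p)\cong\Br(K)[p]$) the cup-product cocycle $c\cup\xi$ composed with the Weil pairing $e_p$, i.e. $\pm\,C\cup_p\xi$.

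The main obstacle I expect is bookkeeping: getting the sign right and making sure the cocycle manipulations are consistent with (a) the specific self-duality identification $P\mapsto e_p(P,-)$ fixed before (\ref{DefUps}), (b) the convention for the cup product $\HH^1\times\HH^1\to\HH^2$ and its order of arguments, and (c) the convention identifying $\HH^2(K,\mu_p)$ with $\Br(K)[p]$. None of these steps is deep, but a sign error anywhere propagates. A cleaner, more conceptual route that avoids some of the cocycle grind is to invoke the general principle that for a short exact sequence of $G_K$-modules $1\to\mu_p\to \mathcal{E}\to E[p]\to 0$ classified by $\epsilon\in\Ext^1_{G_K}(E[p],\mu_p)=\HH^1(K,E[p]^\dual\otimes\mu_p)$, the connecting map $\HH^1(K,E[p])\to\HH^2(K,\mu_p)$ is $\xi\mapsto \epsilon\cup\xi$; it then remains only to check that the extension class of (\ref{Affseq}), under the fixed self-duality, equals $-[C]$ (equivalently, that $\Aff(X,\mu_p)$ is the Heisenberg-type extension attached to the theta group of the covering $C$), which is essentially the content of \cite[I]{CFOSS} and can be cited. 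I would present the cocycle computation as the primary argument and mention the conceptual reformulation as a remark.
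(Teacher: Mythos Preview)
Your proposal is correct and follows essentially the same route as the paper's proof. Both arguments lift $P\in E[p]$ to an explicit affine map via the Weil pairing, compute the coboundary of the resulting $1$-cochain, and identify the answer with the cup-product cocycle; the only cosmetic difference is that the paper works with a fixed $\bK$-isomorphism $\psi:C\to E$ and lifts $P$ to the map $x\mapsto e_p(P,\psi(x))$, whereas you fix a basepoint $x_0\in X$ and lift to $x\mapsto e_p(P,x-x_0)$---these lifts differ by the constant $e_p(P,\psi(x_0))$ and hence yield the same coboundary. Your remark about the $\Ext^1$ interpretation (the extension class of (\ref{Affseq}) being $-[C]$ under the chosen self-duality) is a nice conceptual gloss that the paper does not make explicit.
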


\begin{Remark}
We may consider $\Cov^{(p)}(C/K)$ as the affine space underlying the $\F_p$-vector space $\HH^1(K,E[p])$. With this interpretation the obstruction map $\Ob_{p^2}:\Cov^{(p)}(C/K) \to \Br(K)[p]$ is affine, as one can see from Lemma \ref{Obisaffine}. Lemma \ref{cup} identifies $\Upsilon$ (up to sign) as the corresponding linear map obtained by projecting.
\end{Remark}

\begin{Proof}
Recall that $\rho:C\to E$ denotes the covering map. Let $\psi:C\to E$ be an isomorphism (defined over some extension of $K$) such that $p\circ\psi = \rho$. For any $\sigma \in G_K$, the map $\psi^\sigma - \psi$ corresponds to translation by an element of $E[p]$. This defines a cocycle representing the class of $C$ in $\HH^1(K,E[p])$. The cup product $-C \cup_p \xi$ is the class of the $2$-cocycle 
\[ G_K\times G_K \ni (\sigma,\tau) \mapsto e_p(\psi - \psi^\tau,\,\xi^\tau_\sigma) \in \mu_p\,.\]

Now let $\xi \in \HH^1(K,E[p])$. $\Upsilon$ is a connecting homomorphism, so to compute $\Upsilon(\xi)$ we first choose a lift of $\xi$  to a cochain with values in $\Aff(X,\mu_p)$. For any $P \in E[p]$, we claim that the map $\phi_P:X \ni x \mapsto e_p(P,\psi(x)) \in \mu_p$ is affine and that its image under $\Aff(X,\mu_p)\to E[p]$ is $P$. To see that it is affine, let $x \in X$ and $Q,R \in E[p]$. Using bilinearity of the Weil pairing we have
\begin{align*}	\phi_P(x+Q+R)\cdot\phi_P(x)
		&= e_p(P,\psi(x+Q+R))\cdot e_p(P,\psi(x))\\
		&= e_p(P,\psi(x)+Q+R)\cdot e_p(P,\psi(x))\\
		&= e_p(P,\psi(x)+Q)\cdot e_p(P,\psi(x)+R)\\
		&= \phi_P(x+Q)\cdot\phi_P(x+R)\,.
\end{align*}
The image of $\phi_P$ in $\Hom(E[p],\mu_p)$ is given by projecting onto the linear part. This is the map \[R \mapsto \phi_P(x+R)/\phi_P(x) = e_p(P,\psi(x)+R)/e_p(P,\psi(x)) = e_p(P,R)\,.\] The identification of $E[p]$ with $\Hom(E[p],\mu_p)$ is given by 
\[ E[p] \ni P \leftrightarrow e_p(P,-)\in \Hom(E[p],\mu_p)\,,\] so the image of $\phi_P$ in $E[p]$ is $P$.

Thus $\Upsilon(\xi)$ is given by the coboundary of the cochain $\sigma \mapsto e_p(\xi_\sigma,\psi) = e_p(-\psi,\xi_\sigma) \in \Aff(X,\mu_p)$. Here $e_p(-\psi,\xi_\sigma)$ is the map $x \mapsto e_p(-\psi(x),\xi_\sigma)$.  The value of the coboundary on a pair $(\sigma,\tau) \in G_K\times G_K$ is given by
\[ \frac{ e_p(-\psi,\xi_\sigma)^\tau\cdot e_p(-\psi,\xi_\tau)}{e_p(-\psi,\xi_{\sigma\tau})} = \frac{ e_p(-\psi^\tau,\xi_\sigma^\tau)}{e_p(-\psi,\xi_\sigma^\tau)} = e_p(\psi -\psi^\tau,\xi_\sigma^\tau)\,. \] This is the same as the cup product computed above, so the lemma is proven.
\end{Proof}

\subsection{Making cohomology groups explicit}
We have identified $C^\perp$ with the kernel of $\Upsilon$. By exactness of the sequence (\ref{DefUps}) defining $\Upsilon$, this is the same as the image of $\HH^1(K,\Aff(X,\mu_p))$ in $\HH^1(K,E[p])$. This suggests that we should look for a practical description of $\HH^1(K,\Aff(X,\mu_p))$.

Recall that $F$ denotes the flex algebra, $\Map_K(X,\bK)$, and that $\mu_p(\bF) = \Map(X,\mu_p)$. We have a canonical monomorphism $\Aff(X,\mu_p) \hookrightarrow \mu_p(\bF)$; this is simply the observation that an affine map is a map. To obtain a description of $\HH^1(K,\Aff(X,\mu_p))$ we want to extend this to a short exact sequence and take its Galois cohomology. To do this, we want to write down a morphism on $\mu_p(\bF)$ whose kernel is $\Aff(X,\mu_p)$. We can achieve this by identifying a suitable $G_K$-set derived from $X$ and taking the {\em induced norm map} described in Section \ref{DerivedGK}. A $G_K$-set derived from $X$ is just a $G_K$-stable set of divisors on $C$ that are supported entirely on $X$. This and the characterization of affine maps in (\ref{affinecond}) motivates the following lemma.
\begin{Lemma}
\label{hyperplanes}
The set of divisors of the form $(p-2)[x] + [x+P] + [x-P] \in \Div(\bC)$, with $x \in X$ and $P \in E[p]$, is a $G_K$-stable set of hyperplane sections of $C$.
\end{Lemma}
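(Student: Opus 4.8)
The plan is to prove two things: (i) each divisor $d_{x,P} := (p-2)[x] + [x+P] + [x-P]$ is a hyperplane section of $C$ as a genus one normal curve of degree $p$ in $\PP^{p-1}$, and (ii) the set of all such divisors is stable under the action of $G_K$. The second point is immediate from the Galois-equivariance of the action of $E[p]$ on $X$: for $\sigma \in G_K$ one has $x^\sigma \in X$, and since $G_K$ acts on $X$ through $\AGL(X)$ (as recalled in Section \ref{affinemaps}), conjugation sends the triple $(x, x+P, x-P)$ to $(x^\sigma, x^\sigma + P^\sigma, x^\sigma - P^\sigma)$, so $d_{x,P}^\sigma = d_{x^\sigma,P^\sigma}$, which is again in the set. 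So the real content is (i).

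For (i), I would argue geometrically over $\bK$, so fix an isomorphism $\psi:C\to E$ defined over $\bK$ with $p\circ\psi = \rho$, identifying $X = \rho^{-1}(0_E)$ with $E[p]$. The key fact is the one already used in the proof of Lemma \ref{pbflex}: two divisors on an elliptic curve are linearly equivalent if and only if they have the same degree and the same sum in the group law. The hyperplane sections of $C\subset\PP^{p-1}$ are exactly the divisors in the complete linear system $|\psi^*(p[0_E])|$, i.e. the effective divisors of degree $p$ on $C$ whose image under $\psi$ sums to $0_E$. Now $d_{x,P}$ has degree $(p-2) + 1 + 1 = p$, and under $\psi$ its sum is $(p-2)\psi(x) + \psi(x+P) + \psi(x-P)$. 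Writing $\psi(x+P) = \psi(x) + P$ and $\psi(x-P) = \psi(x) - P$ (the action of $E[p]$ on $X$ corresponds to translation after applying $\psi$), this sum equals $(p-2)\psi(x) + 2\psi(x) = p\,\psi(x)$. Since $\psi(x) \in E[p]$, we get $p\,\psi(x) = 0_E$. Hence $d_{x,P}$ lies in $|\psi^*(p[0_E])|$ and is therefore a hyperplane section.

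One subtlety worth spelling out: the model of $C$ as a genus one normal curve of degree $p$ is the one fixed in Section \ref{Notation2}, arising from the Brauer-Severi diagram of $(C,\rho)$, and its hyperplane sections are precisely the members of the linear system attached to the divisor class $\psi^*(p[0_E])$ — this is exactly the correspondence between genus one normal curves of degree $n$ and linear systems recalled in Section \ref{ncoverings} (the hyperplane-section divisors are the divisors linearly equivalent to $\psi^* n[0_E]$). Under the assumption $\Pic(C) = \Pic(\bC)^{G_K}$ this class is represented by a $K$-rational divisor, but we don't even need that here since the claim is purely geometric. I do not expect a genuine obstacle; the only thing to be careful about is keeping the bookkeeping of "same degree and same sum" correct and noting that the degenerate case $P$ of order dividing... is irrelevant — even if $P = 0$ or $x+P = x-P$, the divisor $d_{x,P}$ is still effective of degree $p$ with sum $0_E$, so the statement holds for all $x\in X$, $P\in E[p]$ without exception, and the multiset interpretation from Section \ref{DerivedGK} handles coincident points gracefully.
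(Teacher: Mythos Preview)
Your proof is correct and follows essentially the same approach as the paper: both arguments reduce to the geometric fact that two divisors on an elliptic curve are linearly equivalent if and only if they have the same degree and the same sum, and both verify that $d_{x,P}$ has degree $p$ and sums to a $p$-torsion point. The only cosmetic difference is that the paper picks a flex $x_0$ and treats $(C,x_0)$ directly as an elliptic curve, whereas you transport everything to $E$ via $\psi$; these are equivalent formulations.
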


\begin{Proof}
The fact that these divisors form a $G_K$-set follows from the fact that the action of $E[p]$ on $X$ is Galois equivariant. To see that they are hyperplane sections, we may work geometrically, considering $C$ as an elliptic curve with some flex $x_0 \in X$ as distinguished point. Note that the flex points are then the $p$-torsion points on the elliptic curve $(C,x_0)$. Since the model for $C$ is given by the embedding corresponding to the complete linear system $|p[x_0]|$, the hyperplane sections are precisely those divisors linearly equivalent to $p[x_0]$. That the divisors in the lemma are hyperplane sections is then a consequence of the well-known fact that two divisors on an elliptic curve are linearly equivalent if and only if they have the same degree and the same sum.
\end{Proof}

We use $Y$ to denote the set of hyperplanes in Lemma \ref{hyperplanes}. It is a $G_K$-set and we denote its corresponding \'etale $K$-algebra by $H$ (for {\em hyperplane algebra}). Note that $Y$ is derived from $X$ in the sense described in Section \ref{GKEtale}. Using (\ref{affinecond}) we see that the induced norm map $\partial: F \to H$ yields an exact sequence
\begin{align}
1 \to \Aff(X,\mu_p) \To \bF^\times \stackrel{\partial}\To \bH^\times\,.
\end{align}
This allows us to obtain the desired description of $\HH^1(K,\Aff(X,\mu_p))$.

\begin{Lemma}
\label{dfp}
\[\HH^1(K,\Aff(X,\mu_p)) \simeq \frac{(\partial\bF^\times)^{G_K}}{\partial F^\times}\,.\] 
\end{Lemma}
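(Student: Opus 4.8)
The plan is to take Galois cohomology of the short exact sequence of $G_K$-modules extracted from the lemma established immediately above. That result exhibits the left-exact sequence $1 \to \Aff(X,\mu_p) \To \bF^\times \stackrel{\partial}\To \bH^\times$, in which $\Aff(X,\mu_p)$ is identified with $\ker(\partial)$. Since $\partial$ is the base change to $\bK$ of the induced norm map $F \to H$ of \'etale $K$-algebras, it is a morphism of $G_K$-modules; hence its image $\partial\bF^\times$ is a $G_K$-submodule of $\bH^\times$, and we obtain a genuine short exact sequence of $G_K$-modules
\[ 1 \to \Aff(X,\mu_p) \To \bF^\times \stackrel{\partial}\To \partial\bF^\times \to 1\,. \]
Passing to the associated long exact sequence in $G_K$-cohomology yields in particular the exact piece
\[ (\bF^\times)^{G_K} \stackrel{\partial}\To (\partial\bF^\times)^{G_K} \To \HH^1(K,\Aff(X,\mu_p)) \To \HH^1(K,\bF^\times)\,. \]

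The next step is to identify the outer terms. Because $F = \Map_K(X,\bK)$ is by definition the $G_K$-invariant subalgebra of $\bF = \Map(X,\bK)$, we have $(\bF^\times)^{G_K} = F^\times$, and the image of $F^\times$ under the left-hand map is exactly $\partial F^\times$. For the right-hand term, the generalization of Hilbert's Theorem $90$ to \'etale algebras recorded in Section \ref{GKEtale} gives $\HH^1(K,\bF^\times) = 0$. Consequently the connecting homomorphism $(\partial\bF^\times)^{G_K} \to \HH^1(K,\Aff(X,\mu_p))$ is surjective, with kernel equal to the image $\partial F^\times$ of $\partial$, and this gives the claimed isomorphism
\[ \HH^1(K,\Aff(X,\mu_p)) \simeq \frac{(\partial\bF^\times)^{G_K}}{\partial F^\times}\,. \]

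There is no real obstacle here: all the substantive input is already in place, namely the exact sequence preceding the statement and Hilbert $90$ for \'etale algebras. The only points requiring a moment's care are the verification that $\partial\bF^\times$ is $G_K$-stable (immediate from the $G_K$-equivariance of $\partial$) and that $(\bF^\times)^{G_K}$ is genuinely $F^\times$ and not something larger — this holds because an invertible element of $\bF$ that is fixed by $G_K$ lies in $\bF^{G_K} = F$, and its inverse is then automatically in $F$ as well.
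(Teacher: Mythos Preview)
Your proof is correct and follows exactly the same approach as the paper: form the short exact sequence $1 \to \Aff(X,\mu_p) \to \bF^\times \to \partial\bF^\times \to 1$, take Galois cohomology, and use Hilbert's Theorem 90 for \'etale algebras to conclude. The paper's proof is simply a terser version of what you wrote.
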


\begin{Proof}
We have an exact sequence \[ 1\to\Aff(X,\mu_p) \to \bF^\times \stackrel{\partial}\to \partial\bF^\times \to 1\,.\] By Hilbert's Theorem 90, $\HH^1(K,\bF^\times) = 0$. The result follows by considering the long exact sequence of Galois cohomology groups.
\end{Proof}

\begin{Corollary}
\label{desc2}
There is an isomorphism $C^\perp \simeq (\partial\bF^\times)^{G_K}/K^\times\partial F^\times$, where we identify $K^\times$ with its image in $H^\times$.
\end{Corollary}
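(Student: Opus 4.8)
The plan is to assemble the isomorphism out of three exact sequences already in hand: the sequence (\ref{DefUps}) defining $\Upsilon$, the long exact cohomology sequence of (\ref{Affseq}), and the identification in Lemma \ref{dfp}.

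First, Lemma \ref{cup} gives $\Upsilon(\xi)=-C\cup_p\xi$, so $C^\perp=\ker\Upsilon$. Exactness of (\ref{DefUps}) then identifies $C^\perp$ with the image of the map $j:\HH^1(K,\Aff(X,\mu_p))\to\HH^1(K,E[p])$ induced by the quotient map $\Aff(X,\mu_p)\to\Hom(E[p],\mu_p)=E[p]$ of (\ref{Affseq}). Taking the long exact cohomology sequence of (\ref{Affseq}), the kernel of $j$ is the image of $\HH^1(K,\mu_p)$, so $C^\perp\cong\HH^1(K,\Aff(X,\mu_p))/\im(\HH^1(K,\mu_p))$. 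It then remains to transport the right-hand side across the isomorphism of Lemma \ref{dfp}, $\HH^1(K,\Aff(X,\mu_p))\cong(\partial\bF^\times)^{G_K}/\partial F^\times$, and identify the image of $\HH^1(K,\mu_p)$ there.

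Recall that the isomorphism of Lemma \ref{dfp} comes from Hilbert's Theorem 90 applied to $1\to\Aff(X,\mu_p)\to\bF^\times\stackrel{\partial}{\to}\partial\bF^\times\to1$: a class $h\in(\partial\bF^\times)^{G_K}$ corresponds to the cocycle $\sigma\mapsto g^\sigma/g$ for any $g\in\bF^\times$ with $\partial g=h$. I would compute the image of $\HH^1(K,\mu_p)\cong K^\times/K^{\times p}$ (Kummer theory) as follows: given $a\in K^\times$, choose $\alpha\in\bK$ with $\alpha^p=a$; the Kummer cocycle $\sigma\mapsto\alpha^\sigma/\alpha$, pushed into $\Aff(X,\mu_p)\subset\bF^\times$ as the constant map $\sigma\mapsto\alpha^\sigma/\alpha$, is trivialized in $\bF^\times$ by the constant function $\alpha$, so it corresponds to the class of $\partial(\alpha)\in(\partial\bF^\times)^{G_K}$. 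Since every divisor $(p-2)[x]+[x+P]+[x-P]$ in $Y$ is a hyperplane section, hence of degree $p$ (Lemma \ref{hyperplanes}), the induced norm map sends the constant function $\alpha$ to the constant function $\alpha^p=a$; this is precisely the image of $a$ under the natural inclusion $K^\times\hookrightarrow H^\times$, and in particular $K^\times\subseteq(\partial\bF^\times)^{G_K}$. Thus $\im(\HH^1(K,\mu_p))$ corresponds to $K^\times\partial F^\times/\partial F^\times$, and we conclude
\[ C^\perp\;\cong\;\frac{(\partial\bF^\times)^{G_K}/\partial F^\times}{K^\times\partial F^\times/\partial F^\times}\;\cong\;\frac{(\partial\bF^\times)^{G_K}}{K^\times\partial F^\times}\,. \]

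The one genuinely delicate point is the bookkeeping in the last paragraph: matching the connecting homomorphism of Lemma \ref{dfp} with the Kummer connecting homomorphism, and checking that the copy of $\mu_p$ sitting inside $\Aff(X,\mu_p)$ as constant maps is carried to the copy of $K^\times$ named in the statement. Everything else is formal diagram chasing; the constant-function computation works out cleanly precisely because $Y$ consists of hyperplane sections, i.e.\ of divisors of degree $p$, so that the weighted product $\prod_a\alpha^{n_a}$ collapses to $\alpha^p$.
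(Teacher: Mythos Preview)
Your proof is correct and follows essentially the same approach as the paper: both identify $C^\perp$ with $\HH^1(K,\Aff(X,\mu_p))$ modulo the image of $\HH^1(K,\mu_p)$ via Lemma~\ref{cup} and the exact sequence~(\ref{DefUps}), then use Lemma~\ref{dfp} and the key observation that $\partial(\alpha)=\alpha^p$ for constant $\alpha\in\bK\subset\bF$ to identify that image with $K^\times\partial F^\times/\partial F^\times$. The paper packages the compatibility check as a commutative diagram of short exact sequences (Kummer for $\bK^\times$ mapping to the sequence defining Lemma~\ref{dfp}), whereas you trace the cocycles explicitly, but the content is the same.
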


\begin{Proof}
Lemma \ref{cup} shows that $C^\perp$ is isomorphic to $\HH^1(K,\Aff(X,\mu_p))$ modulo the image of $\HH^1(K,\mu_p)$. Lemma \ref{dfp} and Hilbert's Theorem 90 allow us to identify these groups with $(\partial\bF^\times)^{G_K}/\partial F^\times$ and $K^\times/K^{\times p}$, respectively. We only need to show that the identifications are compatible.

Noting that $\bK \subset \bF = \Map(X,\bK)$ consists of the constant maps we see that for $\alpha \in \bK$, $\partial(\alpha)= \alpha^p \in \bK \subset \bH$. This shows that the following diagram commutes:
\[ \xymatrix{	
	1 \ar[r]& \mu_p \ar[r]\ar@{^{(}->}[d]& \bK^\times \ar[r]^p\ar@{^{(}->}[d]& \bK^\times \ar[r]\ar@{^{(}->}[d]& 1\\
	1 \ar[r]& \Aff(X,\mu_p) \ar[r]& \bF^\times \ar[r]^\partial& \partial\bF^\times \ar[r]& 1 }\]
The identifications are made by taking Galois cohomology and using that, by Hilbert's Theorem 90, $\HH^1(K,-)$ of the middle terms vanish. From this compatibility is clear.
\end{Proof}

Before proceeding we fix some notation. As a $G_K$-set, $Y$ splits as a disjoint union of (at least) two $G_K$-stable subsets. The first consists of the $p^2$ hyperplane sections of the form $p[x]$ with $x \in X$. These divisors correspond to pairs $(x,P)$ with $P=0$. The other consists of those $y \in Y$ associated to some pair $(x,P)$ with $P \ne 0$. These two $G_K$-subsets will be denoted by $Y_1$ and $Y_2$; their corresponding \'etale $K$-algebras will be denoted by $H_1$ and $H_2$. As $G_K$-sets, $X$ and $Y_1$ are isomorphic and so we will identify $F$ with $H_1$. Thus $H$ splits as $H \simeq H_1\times H_2\simeq F\times H_2$. Correspondingly we have a splitting of the induced norm map as $\partial = \partial_1\times\partial_2$ with $\partial_1: F \ni \alpha \mapsto \alpha^p \in F$.

For $p=3$, $Y_2$ consists of the $12$ lines of $\PP^2$ that pass through three distinct flex points of $C$. For $p\ge 5$, $X\times \frac{E[p]\setminus\{0_E\}}{\{\pm1\}}$ and $Y_2$ are isomorphic as $G_K$-sets, a pair $(x,P)$ corresponding to the hyperplane section $(p-2)[x]+[x+P]+[x-P]$. From this we see that $\#Y_2 = p^2(p^2-1)/2$. There is a canonical projection $Y_2 \ni (x,P) \mapsto x \in X$. Thus, for $p\ge 5$, $H_2$ may be viewed as an $F$-algebra of degree $(p^2-1)/2$.

\section{The descent map}
\label{TheDescentMap}
Recall (Corollary \ref{Covphs}) that $\Cov_0^{(p)}(C/K)$ is a principal homogeneous space for $C^\perp$. We have obtained a more or less concrete description of $C^\perp$ as a subgroup of $H^\times/K^\times\partial F^\times$. The goal now is to give an equally explicit description of $\Cov_0^{(p)}(C/K)$ as a coset of $C^\perp$ inside $H^\times/K^\times\partial F^\times$. This will be achieved by our {\em descent map}. 

Let $\x \in \Div(C\otimes_KF) = \Map_K(X,\Div(\bC))$ denote the map whose value at $x \in X$ is the divisor $[x]$. Similarly we use $\y$ to denote the element of $\Div(C\otimes_KH)= \Map_K(Y,\Div(\bC))$ whose value at $y \in Y$ is the divisor $y \in \Div(\bC)$. By Lemma \ref{hyperplanes}, the divisors in $Y$ are all hyperplane sections of $C$. Thus we can choose a linear form $\tilde{\ell} \in H[u_1,\dots,u_p]$ cutting out the divisor $\y$. We then choose any linear form $u \in K[u_1,\dots,u_p]$ cutting out a divisor on $C$ that is disjoint from $X$ to obtain a rational function $\ell = \tilde{\ell}/u \in \kappa(C\otimes_K H)^\times$ with \[ \diw(\ell) = \y - \diw(u)\,.\]

\begin{Proposition}
The function $\ell$ induces a unique homomorphism 
\[ \Phi:\Pic(C) \to H^\times/K^\times\partial F^\times\,,\] with the property that the image of any divisor class is given by evaluating $\ell$ at any $K$-rational representative with support disjoint from $X$ and $\diw(u)$.
\end{Proposition}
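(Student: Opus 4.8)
The plan is to obtain $\Phi$ as a direct application of Proposition~\ref{DescentOnPic}, taking $\Omega = X$ and $\Psi = Y$. With these choices $A(\Omega) = F$ and $A(\Psi) = H$, and the induced norm map attached to the derived $G_K$-set $Y$ is exactly the map $\partial\colon F\to H$ introduced above; the rational function playing the role of $f$ is $\ell = (\ell_y)_{y\in Y}$, viewed as a $G_K$-equivariant family of functions in $\kappa(\bC)^\times$. First I would verify the two hypotheses preceding Proposition~\ref{DescentOnPic}. By construction $\diw(\ell_y) = y - \diw(u)$; since $\diw(u)$ is disjoint from $X$ it is disjoint from every $y\in Y$, so $y$ and $\diw(u)$ are, respectively, the zero and pole divisors of $\ell_y$. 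This gives hypothesis~(1), namely $[\ell]_0(y) = y$ for all $y$. Hypothesis~(2) is immediate: $[\ell]_\infty(y) = \diw(u)$ for every $y$, so $[\ell]_\infty$ is the constant map with value the $K$-rational divisor $\diw(u)$, in particular constant on each $G_K$-orbit.

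Proposition~\ref{DescentOnPic} then yields a unique homomorphism $\Pic(C)\to H^\times/(\iota(K^\times)\partial(F^\times))$ whose value on the class of a $K$-rational divisor $d$ with support disjoint from $X$ and $\diw(u)$ is the class of $\ell(d)$, and such $d$ exist in every class. All that remains is to check that $\iota(K^\times)$ is precisely the diagonal copy of $K^\times$ in $H^\times$. Because $[\ell]_\infty$ takes the single value $\diw(u)$ on every $G_K$-orbit $\mathcal{O}\subset Y$, each divisor $d_{\mathcal{O}}$ of Proposition~\ref{DescentOnPic} equals $\diw(u)$, so the integer $m_{\mathcal{O}}$, the gcd of the multiplicities with which closed points occur in $\diw(u)$, is one and the same $m$ for all $\mathcal{O}$. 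Since $m$ divides $\deg\diw(u) = p$ (the degree of a hyperplane section of the degree-$p$ curve $C$), we have $m\in\{1,p\}$.

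To exclude $m = p$, I would observe that in that case the only possibility is $\diw(u) = p[\mathcal{P}]$ for a single $K$-rational point $\mathcal{P}$ of $C$; working geometrically with a flex $x_0\in X$ as origin, $p[\mathcal{P}]$ and $p[x_0]$ are both hyperplane sections, hence linearly equivalent, and comparing their sums on the elliptic curve $(C,x_0)$ forces $p\mathcal{P} = O$, i.e. $\mathcal{P}\in E[p] = X$, contradicting the disjointness of $\diw(u)$ from $X$. Hence $m=1$, so $\iota(a) = (a)_{\mathcal{O}}$ is the diagonal image of $a$, $\iota(K^\times)\partial(F^\times) = K^\times\partial(F^\times)$, and $\Phi$ maps to $H^\times/K^\times\partial F^\times$ as claimed; the homomorphism property, uniqueness, and the evaluation property are all inherited from Proposition~\ref{DescentOnPic}. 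The only point beyond routine unwinding of definitions is this last identification of $\iota(K^\times)$, whose heart is the observation that a hyperplane section disjoint from the flexes cannot be $p$ times a rational point.
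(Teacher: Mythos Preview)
Your proof is correct and follows the same approach as the paper, which simply says the result follows directly from Proposition~\ref{DescentOnPic}. You have supplied the details the paper omits: the verification of hypotheses (1) and (2), and the identification $\iota(K^\times)=K^\times$ via the observation that a hyperplane section disjoint from $X$ cannot be of the form $p[\mathcal{P}]$ (since such a $\mathcal{P}$ would be a flex by Definition~\ref{FlexPoints}). One small remark: even if $m=p$ had occurred, you would still obtain the claimed map by composing with the further quotient $H^\times/\partial F^\times \to H^\times/K^\times\partial F^\times$, since $K^{\times p}=\partial(K^\times)\subset\partial F^\times$; but your argument that $m=1$ is cleaner and shows the target is exactly as in Proposition~\ref{DescentOnPic}.
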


\begin{Proof}
This follows directly from Proposition \ref{DescentOnPic}.
\end{Proof}

Recall that we have assumed $\Pic(C) = \Pic(\bC)^{G_K}$. Identifying $\Pic^1(C)$ with $C(K)$ we can think of $\Phi$ as giving a map on the $K$-points of $C$. For the points outside $X$ and $\diw(u)$, this is simply given by evaluating $\ell$. Note also that the homomorphism does not depend on the choice for $u$. So if we like, we may determine the image of a point by evaluating $\tilde{\ell}$ on some choice of homogeneous coordinates. For this reason we may refer to $\tilde{\ell}$ as the linear form defining the descent map in the following theorem.

\begin{Theorem}
\label{PhionCov}
The choice of linear form $\tilde{\ell}$ determines a unique well-defined map (called the {\em descent map})
\[ \tilde{\Phi} : \Cov_0^{(p)}(C/K) \To H^\times/K^\times \partial F^\times\] with the following property. If $(D,\pi)\in \Cov^{(p)}_0(C/K)$ and $K\subset L$ is any extension of fields with $Q \in D(L)$, then
\[ \tilde{\Phi}((D,\pi)) \equiv \Phi(\pi(Q)) \Mod L^\times \partial F_L^\times\,. \] In particular, if $D(K) \ne \emptyset$, then $\tilde{\Phi}((D,\pi))$ is the image of some $K$-rational point of $C$ under $\Phi$.
\end{Theorem}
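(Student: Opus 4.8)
The plan is to construct $\tilde{\Phi}$ by hand from the projective model of $D$ supplied by Lemma~\ref{pbflex}, and to deduce the compatibility with $\Phi$ from a divisor computation; uniqueness will then be immediate from the explicitness of the construction.

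I would proceed as follows. Given $(D,\pi)\in\Cov_0^{(p)}(C/K)$, Lemma~\ref{pbflex} produces a model of $D$ over $K$ as a genus one normal curve of degree $p^2$ in $\PP^{p^2-1}$ for which each $\pi^*[x]$ ($x\in X$) is a hyperplane section. Since $[p]^*$ carries every $p$‑torsion divisor class on $\Jac(C)$ to $p^2$ times the origin, the classes $\pi^*\CO_C([x])$ all coincide as $x$ runs over $X$, so the hyperplane class of any such model is forced to equal $\pi^*\CO_C([x_0])$ (any $x_0\in X$); hence the model is unique up to a $K$‑automorphism of $\PP^{p^2-1}$, and $\pi^*\CO_C(1)\cong\CO_D(p)$ over $K$ (both are $K$‑line bundles in the same class of $\Pic(D)$, which lies in $\Pic(D)$ precisely because $(D,\pi)$ has trivial obstruction). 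Fixing such an isomorphism and, via Hilbert~90 over the residue fields of the points of $X$, a Galois‑equivariant family $(L_x)_{x\in X}$ of linear forms on $\PP^{p^2-1}$ with $\diw_D(L_x)=\pi^*[x]$ — unique up to scaling by $F^\times$ — I set, for $y\in Y$ corresponding to $(x,P)$, $L_y:=L_x^{\,p-2}L_{x+P}L_{x-P}$ (so $L_y=L_x^{\,p}$ when $P=0$). Then $L_y|_D$ and $\pi^*(\tilde\ell_y|_C)$ are sections of $\CO_D(p)$ with the same zero divisor $\pi^*y$, so $\pi^*(\tilde\ell_y|_C)=c_y\,(L_y|_D)$ for a constant $c_y\in\bK^\times$; the family $(c_y)_{y\in Y}$ is Galois‑equivariant, hence lies in $H^\times$, and I define $\tilde\Phi((D,\pi))$ to be its class in $H^\times/K^\times\partial F^\times$. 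Changing $(L_x)$ to another admissible family multiplies $(c_y)_y$ by an element of $\partial F^\times$, while changing the model or the chosen isomorphism multiplies it by a constant of $K^\times$; thus the class depends only on $(D,\pi)$ and the fixed form $\tilde\ell$, which gives well‑definedness and the asserted uniqueness.

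For the compatibility, let $K\subset L$ and $Q\in D(L)$. By the moving lemma used in the proof of Proposition~\ref{DescentOnPic}, the class of the $L$‑rational divisor $[Q]$ contains an $L$‑rational divisor $\Delta\sim[Q]$ whose support avoids $\pi^{-1}(X\cup\diw(u))$. Then $\pi_*\Delta$ is an $L$‑rational divisor on $C$, linearly equivalent to $[\pi(Q)]$ and avoiding $X\cup\diw(u)$, so $\Phi(\pi(Q))=\ell(\pi_*\Delta)=\bigl(\ell_y(\pi_*\Delta)\bigr)_y$. Writing $\pi^*\ell_y=\ell_y\circ\pi=c_y\cdot(L_y|_D)/\pi^*(u|_C)$ and using the identity $\ell_y(\pi_*\Delta)=(\pi^*\ell_y)(\Delta)$ together with $\deg\Delta=1$ (and Galois‑equivariant choices of homogeneous coordinates), one finds $\ell_y(\pi_*\Delta)=c_y\,\partial(\gamma)_y\,\lambda$, where $\gamma\in F_L^\times$ is obtained by evaluating the family $(L_x)_x$ at $\Delta$ and $\lambda\in L^\times$ is independent of $y$. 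Hence $\Phi(\pi(Q))$ and $(c_y)_y$ differ by $\lambda\,\partial(\gamma)\in L^\times\partial F_L^\times$, i.e.\ $\tilde\Phi((D,\pi))\equiv\Phi(\pi(Q))\ \Mod L^\times\partial F_L^\times$; taking $L=K$ gives the final statement.

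I expect the main obstacle to be the rigidity of the model — namely, extracting from Lemma~\ref{pbflex} and the $p$‑torsion divisor computation both the uniqueness of the hyperplane class and the identification $\pi^*\CO_C(1)\cong\CO_D(p)$ over $K$, since this is what makes $(c_y)_y$ well defined and Galois‑equivariant — together with the careful bookkeeping needed to keep the forms $L_x$ and all coordinate choices Galois‑equivariant. Once this is in hand, the compatibility is a formal evaluation of $\ell$ on a divisor.
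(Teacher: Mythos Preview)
Your proposal is correct and follows essentially the same route as the paper's proof. Your family $(L_x)_{x\in X}$ is precisely the paper's linear form $h\in F[z_1,\dots,z_{p^2}]$ defining the pull-back of a generic flex, your $L_y=\partial(L_x)_y$ is the paper's $\partial h$, and your constants $(c_y)_y\in H^\times$ are what the paper calls $\Delta$, arising from the relation $\tilde\ell\circ\pi=\Delta\cdot\partial h$ in the coordinate ring of $D$; the well-definedness check (ambiguity by $\partial F^\times$ from the choice of $h$, by $K^\times$ from the choice of forms defining $\pi$ and the model) and the compatibility argument via moving $[Q]$ to an $L$-rational divisor avoiding bad points are the same as well. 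The only cosmetic difference is that you phrase the comparison of $\pi^*(\tilde\ell_y|_C)$ and $L_y|_D$ in terms of sections of $\CO_D(p)$ and an isomorphism $\pi^*\CO_C(1)\cong\CO_D(p)$, whereas the paper works directly with the rational functions $\ell\circ\pi$ and $\partial h/(u\circ\pi)$ having the same divisor; these are equivalent formulations.
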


\begin{Remark}
Recall that $\Cov_0^{(p)}(C/K)$ yields a partition of the $K$-rational points of $C$,
\[ C(K) = \coprod_{(D,\pi)\in \Cov_0^{(p)}(C/K)} \pi(D(K))\,. \] The defining property says that $\Phi : C(K) \to H^\times/K^\times \partial F^\times$ is constant on each of the sets appearing in this partition and that the value on each is equal to the image of the corresponding covering under the descent map.
\end{Remark}

\begin{Proof}
Let $(D,\pi) \in \Cov_0^{(p)}(C/K)$. By Lemma \ref{pbflex} we have a model for $(D,\pi)$ as a genus one normal curve of degree $p^2$ in $\PP^{p^2-1} = \PP^{p^2-1}(z_1:\dots:z_{p^2})$, where $\pi$ is defined by homogeneous polynomials, $\pi_i \in K[z_1,\dots,z_{p^2}]$ and the pull-back of any flex point $x$ on $C$ is a hyperplane section $\mathfrak{h}_x$ of $D$. For any $x$, $\mathfrak{h}_x$ can be defined by the vanishing of some linear form $h_x\in \bK[z_1,\dots,z_{p^2}]$. Moreover, we can choose these $h_x$ to form a Galois equivariant family. Thus they may be patched together to obtain a linear form $h \in F[z_1,\dots,z_{p^2}]$ cutting out the divisor $\pi^*\x$ on $D\otimes_K F$.

Since the zero divisor of $\ell$ is $\y = \partial\x \in \Div(C\otimes_K H)$ we see that $\partial h$ and $\tilde{\ell}\circ\pi$ cut out the same divisor on $D$. It follows that the rational functions $\ell\circ\pi$ and $\partial h/u\circ\pi$ have the same divisor. Hence there exists some $\Delta \in H^\times$ such that
\begin{align}
\label{rel}
\ell \circ \pi = \Delta\cdot\left(\frac{\partial h}{u \circ \pi}\right) \text{ in $\kappa(D\otimes_KH)^\times$.}
\end{align} We define the image of $\tilde{\Phi}((D,\pi))$ to be the class of $\Delta \in H^\times/K^\times\partial F^\times$.

A different choice of forms defining $\pi$ would change the left-hand side of (\ref{rel}) by a factor in $K^\times$.  Similarly, a different choice for the form $h= (h_x)_{x \in X}$ defining $(\mathfrak{h}_x)_{x \in X}$ would change the right-hand side of (\ref{rel}) by a factor in $\partial F^\times$. Thus, having fixed the model for $(D,\pi)$ in $\PP^{p^2-1}$ we get a well-defined element of $H^\times/K^\times\partial F^\times$. 

Let us show that this does not depend on the model. Suppose $(D',\pi')$ is isomorphic to $(D,\pi)$, and choose a model for $(D',\pi')$ in $\PP^{p^2-1}$ as genus one normal curve. As above, choose a linear form $h' \in F[z_1,\dots,z_{p^2}]$ cutting out the divisors $\pi'^*\x$ on $D'$. By assumption we have an isomorphism of coverings $\varphi : D' \rightarrow D$ defined over $K$ (i.e. such that $\pi'=\pi\circ\varphi$). Let $\varphi^* : \kappa(D\otimes_KH) \rightarrow \kappa(D'\otimes_KH)$ denote the isomorphism of function fields induced by $\varphi$. Applying $\varphi^*$ to equation (\ref{rel}), we obtain a relation in $\kappa(D'\otimes_KH)$, 
\begin{align}
\label{stringeq}
\Delta\cdot \left(\frac{\partial(h\circ\varphi)}{ u\circ\pi'}\right) = \varphi^*\bigl(\Delta\cdot\left(\frac{\partial h}{ u\circ\pi}\right) \bigr)= \varphi^*(\ell\circ\pi) = \ell\circ\pi\circ\varphi = \ell\circ\pi'\,.
\end{align}
The divisor on $D'$ cut out by $h\circ \varphi$ is $\pi'^*\x$, so the extremal terms in (\ref{stringeq}) define the image of $(D',\pi')$ under the descent map. Thus the image of $(D',\pi')$ is also the class of $\Delta$, which shows that $\tilde{\Phi}$ is well-defined.

It remains to show that $\tilde{\Phi}$ has the stated property. For this let $Q \in D(L)$ be a point defined over some extension $L$ of $K$. We can find an $L$-rational divisor $d = \sum_in_iQ_i$ on $D$ linearly equivalent to $[Q]$ and such that the support of $d$ contains no points lying above the flex points of $C$ or the zeros of $u$. The divisor $[\pi(Q)]$ on $C$ is linearly equivalent to the $L$-rational divisor $\pi_*d := \sum_in_i[\pi(Q_i)]$ (e.g. \cite[II.3.6]{Silverman}). So $\Phi(\pi(Q))$ is represented by $\ell(\pi_*d)$. On the other hand, the relation (\ref{rel}) defining $\Delta$ gives, \[ \ell(\pi_*d) =  \Delta\cdot \left(\frac{\partial h}{u\circ\pi}\right)(d)\,,\] since $\deg(d) = 1$. Now since $d$ is $L$-rational, $\left(\frac{\partial h}{u\circ\pi}\right)(d) \in L^\times \partial F_L^\times$. So $\Phi(\pi(Q))$ is represented by $\Delta$ as required.
\end{Proof}

In what follows we will refer to a linear form $h \in F[z_1,\dots,z_{p^2}]$ as in the proof (i.e. such that $\pi^*\x = \diw(h)$) as a linear form defining the pull-back of a generic flex. Recall that $\ell$ is defined as the ratio $\ell = \tilde{\ell}/u$. If $(D,\pi) \in \Cov_0^{(p)}(C/K)$ and $h$ is a linear form defining the pull-back of a generic flex (on some model of $(D,\pi)$), then the image of $(D,\pi)$ under the descent map is also represented by the $\Delta \in H^\times$ satisfying the relation \[ \tilde{\ell}\circ\pi = \Delta \partial h \] in the coordinate ring of $D$.

\subsection{Injectivity of the descent map}
\label{InjectivityOfTheDescentMap}
The following proposition shows that the descent map respects the affine structure.
\begin{Proposition}
\label{Phiphs}
Let $(D,\pi) \in \Cov_0^{(p)}(C/K)$, $\xi \in C^\perp$ and $(D,\pi)\cdot \xi$ be the twist of $(D,\pi)$ by $\xi$. Then \[ \tilde{\Phi}((D,\pi)\cdot\xi) = \tilde{\Phi}((D,\pi))\cdot\tilde{\Phi}_0(\xi) \in H^\times/K^\times\partial F^\times\,, \] where $\tilde{\Phi}_0:C^\perp \simeq (\partial\bF^\times)^{G_K}/K^\times\partial F^\times$ is the isomorphism given by Corollary \ref{desc2}.
\end{Proposition}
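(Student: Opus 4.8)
The plan is to verify the twisting formula by comparing the explicit cocycle/cochain descriptions on both sides. Fix $(D,\pi)\in\Cov_0^{(p)}(C/K)$ with a chosen model as a genus one normal curve of degree $p^2$ in $\PP^{p^2-1}$, and a linear form $h\in F[z_1,\dots,z_{p^2}]$ defining the pull-back of a generic flex, so that $\tilde\ell\circ\pi=\Delta\,\partial h$ with $\Delta$ representing $\tilde\Phi((D,\pi))$. The twist $(D,\pi)\cdot\xi$ is obtained by twisting the Galois action on the coordinates of $D$ by a cocycle $\zeta$ representing $\xi\in C^\perp\subset\HH^1(K,E[p])$, where $E[p]$ acts on $\PP^{p^2-1}$ through its action on $D$ (the flex points being an $E[p]$-torsor). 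Working over $\bK$, the curve $\bar D$ and the forms $\pi_i$, $h$ are unchanged; only the descent datum changes. First I would track how $\partial h$ transforms: since $h$ is $F$-rational (Galois-equivariant in the flex index $x\in X$), the twisted Galois action replaces $h$ by a family differing from the original by the action of $\zeta$, i.e. by $\sigma\mapsto h^\sigma$ versus $\sigma\mapsto(h\circ t_{\zeta_\sigma})^\sigma$ where $t_{\zeta_\sigma}$ is the translation on $\bar D$ corresponding to $\zeta_\sigma$.

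The key computation is then to identify the resulting correction factor. On the flex algebra side, translation by $\zeta_\sigma\in E[p]$ permutes the flex points, and the ratio of the pulled-back hyperplane sections $\pi^*[x]$ and $\pi^*[x+\zeta_\sigma]$ is, geometrically, the divisor of a function on $\bar D$; pushing this down via $\partial$ and reading off the $1$-cochain $\sigma\mapsto(\text{this ratio})\in\bar F^\times$ should produce exactly the image of $\xi$ under the connecting map $\HH^1(K,E[p])\to(\partial\bar F^\times)^{G_K}/K^\times\partial F^\times$ appearing in Lemma~\ref{cup} and Corollary~\ref{desc2}. Concretely, I expect to recover the cochain $\sigma\mapsto e_p(-\psi,\zeta_\sigma)$ (or its image under $\partial$) from the proof of Lemma~\ref{cup}, since $\psi$ there plays the role of the isomorphism $D\to E$ and evaluating the affine map $e_p(-\psi,\zeta_\sigma)$ on flex points is precisely how the hyperplane sections $\pi^*[x]$ get permuted. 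Matching these two descriptions gives $\tilde\Phi((D,\pi)\cdot\xi)=\tilde\Phi((D,\pi))\cdot\tilde\Phi_0(\xi)$ modulo $K^\times\partial F^\times$.

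Alternatively — and this may be cleaner to write — I would use the defining property from Theorem~\ref{PhionCov} together with a base change to a splitting field. Pick a finite extension $L/K$ over which both $\xi$ becomes trivial and $D$ acquires an $L$-point $Q$; then $(D,\pi)\cdot\xi$ also has an $L$-point, and over $L$ both descent map values are computed by evaluating $\Phi$ at images of $L$-points. The discrepancy between the two $L$-points (the original and its image under the twisting isomorphism, which is only defined over $L$) is governed by the cocycle $\zeta$, and descending back to $K$ via the cocycle relation converts this discrepancy into precisely $\tilde\Phi_0(\xi)$. This reduces the statement to the compatibility of $\Phi$ with the $E[p]$-action on fibers of $\pi$, which in turn follows from Lemma~\ref{cup} and the construction of $\tilde\Phi_0$ in Corollary~\ref{desc2}.

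The main obstacle I anticipate is bookkeeping: making the identification $X\cong Y_1$, the splitting $\partial=\partial_1\times\partial_2$, and the sign conventions in the Weil pairing (flagged in the Remark before Lemma~\ref{cup}) all line up so that the cochain extracted from the twisted model is literally $\tilde\Phi_0(\xi)$ and not its inverse or a twist by something in $K^\times\partial F^\times$. In particular I would need to be careful that the translation action on the degree-$p^2$ model induced by $\xi\in C^\perp$ is the one compatible with the chosen identification of $E[p]$ with $\Hom(E[p],\mu_p)$, so that the affine maps $\phi_P:x\mapsto e_p(P,\psi(x))$ from the proof of Lemma~\ref{cup} are exactly the functions measuring how $h$ moves. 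Once that alignment is pinned down, the rest is a direct coboundary comparison.
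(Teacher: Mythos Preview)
Your overall strategy matches the paper's: compare the defining relations $\tilde\ell\circ\pi=\Delta\,\partial h$ and $\tilde\ell\circ\pi_\xi=\Delta_\xi\,\partial h_\xi$ via a $\bK$-isomorphism $\varphi:D_\xi\to D$ of coverings, extract $\Delta_\xi/\Delta$ as $\partial$ of something in $\bF^\times$, and identify that something with the cocycle for $\xi$ under $\tilde\Phi_0$. But two points need correction before this goes through.

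First, there is a confusion about which $E[p]$-action is in play. Twisting the $p$-covering $(D,\pi)$ by $\xi$ uses the action of $E[p]$ as \emph{deck transformations} of $\pi:D\to C$; this action preserves each fiber $\pi^{-1}(x)$ and hence preserves each divisor $\pi^*[x]$. It does \emph{not} permute the flex points of $C$. So your sentence ``translation by $\zeta_\sigma$ permutes the flex points, and the ratio of $\pi^*[x]$ and $\pi^*[x+\zeta_\sigma]$\ldots'' is not the right picture. What actually happens is that the linear form $h_x$ cutting out $\pi^*[x]$ is sent by $t_{\zeta_\sigma}$ to another linear form cutting out the \emph{same} divisor $\pi^*[x]$, hence differing from $h_x$ by a nonzero scalar. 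The family of these scalars, as $x$ ranges over $X$, gives an element of $\bF^\times/\bK^\times$, and it is this element you must identify.

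Second, that identification is the content of a lemma you do not have, and which the paper isolates as Lemma~\ref{WPLemma}: for $R\in E[p]$ and any suitable $Q\in D$, the class of $h(Q+R)/h(Q)$ in $\bF^\times/\bK^\times$ equals the image of $R$ under $E[p]\hookrightarrow\mu_p(\bF)/\mu_p\hookrightarrow\bF^\times/\bK^\times$. This is a Weil-pairing computation (pull back to $E$ and use the definition of $e_p$ via functions with divisor $p^*[P]-p^*[0_E]$). Your appeal to Lemma~\ref{cup} is in the right spirit, but that lemma works with affine maps on $X\subset C$, whereas here the relevant functions are the $h_x$ on $D$; Lemma~\ref{WPLemma} is precisely the bridge between the two settings. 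Once you have it, the computation the paper does is a direct cocycle manipulation: with $\eta_\sigma=\bigl(h(\varphi(Q))/h_\xi(Q)\bigr)^\sigma\cdot\bigl(h(\varphi(Q))/h_\xi(Q)\bigr)^{-1}$, one rearranges using $\varphi^\sigma(Q^\sigma)=\varphi(Q^\sigma)+\xi_\sigma$ and applies the lemma to each factor to see that $\eta_\sigma$ maps to $\xi_\sigma+(\varphi(Q^\sigma)-\varphi(Q))-(Q^\sigma-Q)=\xi_\sigma$ in $E[p]$. Your alternative base-change approach would also need this lemma (or an equivalent) to convert the discrepancy between $L$-points into $\tilde\Phi_0(\xi)$.
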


Recall that $C^\perp$ acts simply transitively on $\Cov_0^{(p)}(C/K)$ by twisting. The proposition shows that the image of $C^\perp$ under $\tilde{\Phi}_0$ acts on the image of $\Cov_0^{(p)}(C/K)$ under $\tilde{\Phi}$ by multiplication and that the pair $(\tilde{\Phi},\tilde{\Phi}_0)$ respects these two actions. Since $\tilde{\Phi}_0$ is an isomorphism, we deduce the following.

\begin{Corollary}
\label{Injective}
Assume $\Cov_0^{(p)}(C/K)$ is nonempty. Then the descent map is an affine isomorphism (i.e. isomorphism of principal homogeneous spaces). In particular $\tilde{\Phi}:\Cov_0^{(p)}(C/K) \to H^\times/K^\times\partial F^\times$ is injective, and its image is a coset of $(\partial\bF^\times)^{G_K}/K^\times\partial F^\times$ inside $H^\times/K^\times\partial F^\times$.
\end{Corollary}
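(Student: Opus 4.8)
The plan is to obtain the corollary as a formal consequence of Proposition \ref{Phiphs} together with the two principal-homogeneous-space facts already in place: that $\Cov_0^{(p)}(C/K)$ is a principal homogeneous space for $C^\perp$ under twisting (Corollary \ref{Covphs}), and that $\tilde\Phi_0 : C^\perp \to (\partial\bF^\times)^{G_K}/K^\times\partial F^\times$ is an isomorphism of groups (Corollary \ref{desc2}). The statement really just repackages Proposition \ref{Phiphs} as the assertion that the pair $(\tilde\Phi,\tilde\Phi_0)$ is a morphism of principal homogeneous spaces, and everything else is extracted from that.

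Concretely, I would fix a base point $(D_0,\pi_0) \in \Cov_0^{(p)}(C/K)$, which exists by hypothesis. By Corollary \ref{Covphs} the map $C^\perp \ni \xi \mapsto (D_0,\pi_0)\cdot\xi \in \Cov_0^{(p)}(C/K)$ is a bijection, and by Proposition \ref{Phiphs},
\[ \tilde\Phi\bigl((D_0,\pi_0)\cdot\xi\bigr) = \tilde\Phi((D_0,\pi_0))\cdot\tilde\Phi_0(\xi) \qquad\text{for all } \xi \in C^\perp\,. \]
Thus $\tilde\Phi$ factors as the composite of the inverse of this bijection, the isomorphism $\tilde\Phi_0$ of Corollary \ref{desc2}, and translation by the fixed element $\tilde\Phi((D_0,\pi_0))$ of the abelian group $H^\times/K^\times\partial F^\times$. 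Translation by a fixed element is injective, so $\tilde\Phi$ is injective; its image is the set $\tilde\Phi((D_0,\pi_0))\cdot\bigl((\partial\bF^\times)^{G_K}/K^\times\partial F^\times\bigr)$, which is by construction a coset of $(\partial\bF^\times)^{G_K}/K^\times\partial F^\times$ inside $H^\times/K^\times\partial F^\times$. Since $\Cov_0^{(p)}(C/K)$ carries a simply transitive $C^\perp$-action and this coset carries a simply transitive $(\partial\bF^\times)^{G_K}/K^\times\partial F^\times$-action by translation, the displayed compatibility says precisely that $\tilde\Phi$ is an isomorphism of principal homogeneous spaces relative to the group isomorphism $\tilde\Phi_0$, i.e. an affine isomorphism. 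I would then check, using that $\tilde\Phi_0$ is a homomorphism, that neither the factorization nor the coset depends on the choice of base point, so the conclusion is intrinsic.

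Within this corollary there is no real obstacle; the hard work is all in Proposition \ref{Phiphs}, which provides the compatibility of the descent map with twisting. The only point deserving a word of care is that the action being intertwined is free: $(\partial\bF^\times)^{G_K}/K^\times\partial F^\times$ is an honest subgroup of $H^\times/K^\times\partial F^\times$ acting on it by translations, so no two non-isomorphic coverings can share an image — this is exactly what upgrades the equivariance in Proposition \ref{Phiphs} to injectivity of $\tilde\Phi$. Equivalently one can argue injectivity directly: if $\tilde\Phi((D,\pi)) = \tilde\Phi((D',\pi'))$, write $(D',\pi') = (D,\pi)\cdot\xi$ with $\xi \in C^\perp$ unique by Corollary \ref{Covphs}; Proposition \ref{Phiphs} forces $\tilde\Phi_0(\xi)$ to be trivial, hence $\xi = 0$ since $\tilde\Phi_0$ is injective.
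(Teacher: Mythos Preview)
Your proof is correct and follows essentially the same approach as the paper: the paper simply remarks, just before stating the corollary, that $C^\perp$ acts simply transitively on $\Cov_0^{(p)}(C/K)$, that Proposition~\ref{Phiphs} shows $(\tilde\Phi,\tilde\Phi_0)$ respects the two actions, and that since $\tilde\Phi_0$ is an isomorphism the corollary follows. Your version spells out the base-point factorization and the direct injectivity argument in more detail, but the underlying reasoning is identical.
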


Before giving the proof, it will be useful to put together a diagram. For any $x_0 \in X$ and $P \in E[p]$, the Weil pairing on $E[p]$ gives a map
\[ \phi_{P,x_0}: X \ni x \mapsto e_p(P,x-x_0) \in \mu_p\,, \] where $x-x_0$ denotes the unique $T \in E[p]$ such that $x_0 + T = x$. A different choice for $x_0$ gives a map which differs by a constant factor. Thus, the image of $\phi_{P,x_0}$ in  \[\mu_p(\bF)/\mu_p=\Map(X,\mu_p)/\{\text{constant maps}\}\] depends only on $P$. Nondegeneracy of the Weil pairing shows that distinct choices for $P$ lead to distinct maps. Thus we have an embedding $E[p] \hookrightarrow \mu_p(\bF)/\mu_p$.

Recall that the kernel of $\partial|_{\bF^\times}$ is the space of affine maps to $\mu_p$. Since $\partial_1$ is just the $p$-th power map, the space of affine maps is also equal to the kernel of $\partial_2|_{\mu_p(\bF)}$. Since the constant maps are affine, $\partial_2$ induces a map on $\mu_p(\bF)/\mu_p$. For any $P \in E[p]$ and $x_0 \in X$, the map $\phi_{P,x_0}$ is affine (cf. the proof of \ref{cup}). Now, by counting dimensions for example, we see that the sequence \[0 \to E[p] \to \mu_p(\bF)/\mu_p \stackrel{\partial_2}{\To} \mu_p(\bH_2)\] is exact.

We also have an exact sequence \[1 \to \Aff(X,\mu_p) \to \mu_p(\bF) \stackrel{\partial_2}{\To} \mu_p(\bH_2)\,.\] We claim that the two are compatible in the sense that following diagram commutes. The map $\Aff(X,\mu_p) \to E[p]$ is given by projecting an affine map onto its linear part and then identifying $E[p]$ with its dual using the Weil pairing (so the vertical sequence on the left is (\ref{Affseq}), considered in the discussion leading up to Lemma \ref{cup}).
\begin{align}
\label{diagram1}
\xymatrix{ 	& 1 \ar[d] & 1\ar[d] & \\
		& \mu_p \ar[d]\ar@{=}[r] & \mu_p\ar[d]& \\
		1\ar[r]&\Aff(X,\mu_p)\ar[r]\ar[d]&\mu_p(\bF)\ar[d]\ar[r]^{\partial_2}& \partial_2\bigl(\mu_p(\bF)\bigr)\ar@{=}[d]\ar[r]&1\\
		0\ar[r]&E[p]\ar[r]\ar[d]& \mu_p(\bF)/\mu_p\ar[d]\ar[r]^{\partial_2}&\partial_2\bigl(\mu_p(\bF)\bigr)\ar[r]&1\\
		& 0 & 1\\}
\end{align}
Note that the rows and columns are exact.

\begin{Lemma}
Diagram (\ref{diagram1}) commutes.
\end{Lemma}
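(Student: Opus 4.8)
The plan is to verify commutativity of diagram (\ref{diagram1}) square by square, since all the maps involved are explicitly defined. The diagram has two rows (the upper coming from the exact sequence $1 \to \Aff(X,\mu_p) \to \mu_p(\bF) \stackrel{\partial_2}{\to} \partial_2(\mu_p(\bF)) \to 1$, the lower from $0 \to E[p] \to \mu_p(\bF)/\mu_p \stackrel{\partial_2}{\to} \partial_2(\mu_p(\bF)) \to 1$) and three columns. The right column is the identity map, so the right-hand square commutes trivially because the two horizontal maps there are both literally $\partial_2$ (the bottom one being the map induced on the quotient by $\mu_p$, which is well-defined precisely because constant maps are affine and hence killed by $\partial_2$). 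The middle square, relating $\mu_p(\bF) \to \mu_p(\bF)/\mu_p$ (the quotient projection) with the two copies of $\partial_2$, commutes by the very definition of the induced map on the quotient. So all the content is in the left square.

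The left square asserts that the composite $\Aff(X,\mu_p) \hookrightarrow \mu_p(\bF) \twoheadrightarrow \mu_p(\bF)/\mu_p$ agrees, under the identification of $\Aff(X,\mu_p) \to E[p]$ with $E[p] \hookrightarrow \mu_p(\bF)/\mu_p$, where the former map sends an affine map to its linear part viewed as an element of $\Hom(E[p],\mu_p) \cong E[p]$ via the Weil pairing, and the latter is the embedding $P \mapsto \phi_{P,x_0} \bmod \mu_p$ defined just before the lemma. So I would take an affine map $\phi \in \Aff(X,\mu_p)$ with linear part $\Lambda \in \Hom(E[p],\mu_p)$. By nondegeneracy of the Weil pairing there is a unique $P \in E[p]$ with $\Lambda(T) = e_p(P,T)$ for all $T \in E[p]$; this $P$ is the image of $\phi$ in $E[p]$. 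I must show that the image of $\phi$ in $\mu_p(\bF)/\mu_p$ equals the image of $\phi_{P,x_0}$ there, i.e.\ that $\phi$ and $\phi_{P,x_0}$ differ by a constant map. Fix any $x_0 \in X$; for arbitrary $x = x_0 + T \in X$ we have, by definition of the linear part (with base point $x_0$), $\phi(x)/\phi(x_0) = \Lambda_{\phi,x_0}(T) = \Lambda(T) = e_p(P,T) = e_p(P, x - x_0) = \phi_{P,x_0}(x)$. Hence $\phi = \phi(x_0)\cdot \phi_{P,x_0}$, which is $\phi_{P,x_0}$ times the constant $\phi(x_0) \in \mu_p$. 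This is exactly the assertion that the two images in $\mu_p(\bF)/\mu_p$ coincide, so the left square commutes.

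Finally I would note that exactness of rows and columns has already been asserted in the text (the right column being trivial, the left vertical sequence being (\ref{Affseq}), and the two horizontal sequences exact by construction), so no further verification is needed there; the only point requiring care is the sign convention — the identification $E[p] \cong \Hom(E[p],\mu_p)$ via $P \mapsto e_p(P,-)$ must be used consistently in both the map $\Aff(X,\mu_p) \to E[p]$ and in the definition of $\phi_{P,x_0}$, which is precisely why $\phi_{P,x_0}$ was defined with $e_p(P, x-x_0)$ rather than $e_p(x-x_0, P)$. The main (and really only) obstacle is thus bookkeeping with this sign/duality convention; once the conventions are pinned down, the computation above is immediate. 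I do not expect any genuine difficulty, and the proof is essentially the one-line calculation $\phi = \phi(x_0)\cdot\phi_{P,x_0}$ together with the observation that the other two squares commute for formal reasons.
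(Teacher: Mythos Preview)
Your proof is correct and follows essentially the same approach as the paper: both reduce to the lower-left square (the paper dismisses the other squares as ``obvious''), take an affine map $\phi$, identify its linear part with the unique $R \in E[p]$ satisfying $\phi(x_0+T)/\phi(x_0) = e_p(R,T)$, and observe that $\phi = \phi(x_0)\cdot\phi_{R,x_0}$ so that $\phi$ and $\phi_{R,x_0}$ agree modulo constants. Your additional remarks on the sign convention and the formal commutativity of the other squares are accurate and match the paper's conventions.
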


\begin{Proof}
We only need to show that the lower-left square commutes, the rest being obvious. Let $\phi:x\mapsto\phi(x)$ be an affine map. Choose some $x_0 \in X$. Projection onto the linear part is the map $\Lambda_\phi:P \mapsto \phi(x_0+P)/\phi(x_0)$. Identifying $E[p]$ with its dual via the Weil pairing, $\Lambda_\phi$ is the unique $R \in E[p]$ such that, for all $P \in E[p]$, $\phi(x_0+P)/\phi(x_0) = e_p(R,P)$. The image of this $R$ in $\mu_p(\bF)/\mu_p$ is the class of the map $\phi_{R,x_0}:x\mapsto e_p(R,x-x_0)$. By the property defining $R$, this is equal to the map $x \mapsto \phi(x_0 + (x-x_0))/\phi(x_0) = \phi(x)/\phi(x_0)$. Modulo constant maps, this is the same as the image of $\phi$ in $\mu_p(\bF)$, so the diagram commutes.
\end{Proof}

For the proof of Proposition \ref{Phiphs}, we will make use of an alternative description of the embedding $E[p] \hookrightarrow \mu_p(\bF)/\mu_p$.

\begin{Lemma}
\label{WPLemma} Let $D \in \Cov_0^{(p)}(C/\bK)$ ({\sc nb}: over $\bK$, not $K$) and let $h$ denote a linear form (with coefficients in $\bF$) defining the pull-back of the generic flex point on $C$. For any $R \in E[p]$, the image of $R$ under the composition $E[p] \hookrightarrow \mu_p(\bF)/\mu_p \hookrightarrow \bF^\times/\bK^\times$ is equal to the class of $\frac{h(Q+R)}{h(Q)}$, where $Q \in D$ is any point chosen so that $h(Q)$ and $h(Q+R)$ are both defined and nonzero.
\end{Lemma}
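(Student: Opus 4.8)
The plan is to recognize $h(Q+R)/h(Q)\in\bF^\times$, up to a constant, as a family of functions computing the Weil pairing via the classical functional recipe, as in \cite[III.8]{Silverman}. Since we are working over $\bK$, the torsors $C$ and $D$ are trivial, so I would fix isomorphisms $\psi_C\colon C\to E$ and $\psi_D\colon D\to E$ fitting into a commutative square as in the proof of Lemma~\ref{pbflex}. Under these identifications $\pi\colon D\to C$ becomes multiplication by $p$ on $E$, the flex set $X$ becomes $E[p]$, and for $R\in E[p]$ the translation $Q\mapsto Q+R$ on $D$ becomes translation by $R$ on $E$. I would fix a base flex $x_0\in X$, identified with $0_E$, and for $x\in X$ write $T_x\in E[p]$ for the element with $x_0+T_x=x$; then, unwinding the embedding $E[p]\hookrightarrow\mu_p(\bF)/\mu_p\hookrightarrow\bF^\times/\bK^\times$, the image of $R$ is precisely the class of the map $x\mapsto e_p(R,T_x)$, which is $\phi_{R,x_0}$.

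Next I would bring in, for each $x\in X$, the rational function $f_x:=h_x/h_{x_0}$ on $D$, where $h_x$ is the linear form cutting out the hyperplane section $\pi^*[x]$. As a ratio of two sections of the same line bundle, $f_x$ has divisor $\pi^*[x]-\pi^*[x_0]$, which under the identifications above is $[p]^*\bigl([T_x]-[0_E]\bigr)$. The point is that this is exactly the divisor of a function of the type used to compute $e_p(-,T_x)$: for any $g$ with $\diw(g)=[p]^*\bigl([T_x]-[0_E]\bigr)$ and any $R\in E[p]$, the ratio $g(X+R)/g(X)$ is a constant equal to $e_p(R,T_x)$ (and this is precisely where one uses $R\in E[p]$ rather than merely $R\in E$). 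Consequently $f_x(Q+R)/f_x(Q)=e_p(R,T_x)$ whenever $f_x$ is defined and nonzero at $Q$ and $Q+R$. The hypothesis that $h(Q)$ and $h(Q+R)$ are defined and nonzero says exactly that $Q$ and $Q+R$ avoid every $\pi^{-1}(x)$, $x\in X$ — in particular $\pi^{-1}(x_0)$, so the denominator of $f_x$ does not vanish either — so the formula applies.

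It then remains only to assemble the pieces. For each $x\in X$,
\[ \frac{h_x(Q+R)}{h_x(Q)}=\frac{f_x(Q+R)}{f_x(Q)}\cdot\frac{h_{x_0}(Q+R)}{h_{x_0}(Q)}=e_p(R,T_x)\cdot\frac{h_{x_0}(Q+R)}{h_{x_0}(Q)}, \]
and the second factor does not depend on $x$, so it is a constant element of $\bK^\times$. Hence $h(Q+R)/h(Q)$ and the map $x\mapsto e_p(R,T_x)$ have the same class in $\bF^\times/\bK^\times$, which is the image of $R$; in particular this class is independent of the choices of $Q$ and $x_0$, as it must be.

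The routine parts are the divisor computations; the step I expect to require the most care is the bookkeeping of normalizations. One must check that the functional description of the Weil pairing taken from \cite[III.8]{Silverman} is compatible with the self-duality $P\leftrightarrow e_p(P,-)$ fixed just before Lemma~\ref{cup} — in particular that it is the first argument that carries the translation — so that no spurious inverse or minus sign appears; and one must choose $\psi_C,\psi_D$ so that $\pi$ genuinely becomes $[p]$ and the two translation actions of $E[p]$ (deck transformations of $\pi$ versus translation on $E$) are identified.
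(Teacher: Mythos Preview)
Your proposal is correct and is essentially the paper's own proof: both fix trivializations so that $\pi$ becomes multiplication by $p$, form $f_x=h_x/h_{x_0}$ with divisor $[p]^*([x-x_0]-[0_E])$, invoke the functional definition of $e_p$ from \cite[III.8]{Silverman} to get $f_x(Q+R)/f_x(Q)=e_p(R,x-x_0)$, and then observe that the leftover factor $h_{x_0}(Q+R)/h_{x_0}(Q)$ is constant in $x$ and hence dies in $\bF^\times/\bK^\times$. The only cosmetic difference is that the paper transports $f_x$ to a function $g_x$ on $E$ via $\psi_D^{-1}$ before applying the Weil-pairing formula, whereas you apply it directly on $D$ after identifying $D$ with $E$; your caution about matching the convention $P\leftrightarrow e_p(P,-)$ is warranted and is exactly the bookkeeping the paper carries out implicitly.
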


\begin{Proof}
Let $\psi:C \to E$ be the isomorphism (defined over $\bK$) such that $p\circ\psi = \rho$ and let $x_0$ be a preimage of $0_E$ under $\psi$. Further, let $Q_0$ be any preimage of $x_0$ on $D$ and $\psi_D:D \to E$ be the isomorphism defined (over $\bK$) by $Q \mapsto (Q-Q_0)$. We have a commutative diagram,
\[ \xymatrix{
	E \ar[d]^{p} & D \ar[d]^{\pi} \ar[l]_{\psi_D} \\
	E &		  C \ar[l]^{\psi} \\
}\]
If $x$ is a flex point, evaluating the coefficients of $h$ at $x$ gives a linear form $h_x$ defining the pull-back of $[x]$ by $\pi$. Consider the function $h_x/h_{x_0} \in \kappa(\bar{D})^\times$ and its image $g_x = (\psi_D^{-1})^*(h_x/h_{x_0}) \in \kappa(\bar{E})^\times$. The divisor of $h_x/h_{x_0}$ is $\pi^*[x]-\pi^*[x_0]$, so by commutativity $\diw(g_x) = p^*[(x-x_0)]-p^*[0_E]$. By definition of the Weil pairing \cite[III.8]{Silverman}, for any $R \in E[p]$,  \[ e_p(R,x-x_0) = \frac{g_x(T + R)}{g_x(T)}\,,\] where $T \in E$ is any point chosen so that both numerator and denominator are defined and nonzero. Thus, we have 
\begin{align}
\label{eqqq}
e_p(R,x-x_0) = \frac{h_x(\psi_D^{-1}(T)+R)h_{x_0}(\psi_D^{-1}(T))}{h_x(\psi_D^{-1}(T))h_{x_0}(\psi_D^{-1}(T)+R)}\,. \end{align} Considered as an element of $\bF^\times = \Map(X,\bK^\times)$ modulo the constant maps, the right-hand side of (\ref{eqqq}) is represented by the map \[ \frac{h(\psi_D^{-1}(T)+R)}{h(\psi_D^{-1}(T))} = \left(x \mapsto \frac{h_x(\psi_D^{-1}(T)+R)}{h_x(\psi_D^{-1}(T))}\right)\,.\] On the other hand, the left-hand side of (\ref{eqqq}) represents the image of $R$ in $\mu_p(\bF)/\mu_p$, so we are done. \end{Proof}

{\bf Proof of Proposition \ref{Phiphs}}
Let $(D,\pi)$, $(D_\xi,\pi_\xi)$ and $\xi$ be as in the proposition, and fix models for everything in $\PP^{p^2-1}$. We have an isomorphism (of coverings) $\varphi:D_\xi \to D$ defined over $\bK$, with the property that $\varphi^\sigma(Q) = \varphi(Q)+\xi_\sigma$ for all $Q \in D_\xi$ and $\sigma \in G_K$.

Choose linear forms $h$ and $h_\xi$ with coefficients in $F$ defining the pull-backs of the generic flex by $\pi$ and $\pi_\xi$, respectively. For some $\Delta,\Delta_\xi \in H^\times$, necessarily representing the images of $(D,\pi)$ and $(D_\xi,\pi_\xi)$ under $\tilde{\Phi}$, we have

\[ \Delta\cdot\partial h = \tilde{\ell}\circ \pi \text{ and } \Delta_\xi\cdot\partial h_\xi = \tilde{\ell}\circ \pi_\xi\,\] in the coordinate rings of $D$ and $D_\xi$, respectively. Applying $\varphi^*$ to the first relation and comparing with the second gives \[ \Delta\cdot\partial(h\circ\varphi) = \Delta_\xi\cdot\partial h_\xi \] in the coordinate ring of $D_\xi$. Specializing to a point $Q$ in $D_\xi$ not lying above any flex point of $C$ (i.e. so that neither $h_\xi$ nor $h\circ\varphi$ vanish at $Q$) we have \[ \frac{\Delta_\xi}{\Delta} = \partial\Bigl(\frac{h(\varphi(Q))}{h_\xi(Q)}\Bigr) \in (\partial{\bF^\times})^{G_K}\,.\] Note that $h_\xi(Q)$ and $h(\varphi(Q))$ depend on a choice of homogeneous coordinates for $Q$, but that their ratio does not. This is $G_K$-invariant since $\Delta$ and $\Delta_\xi$ are in $H^\times$.

Under the isomorphism $(\partial\bF^\times)^{G_K}/\partial F^\times \simeq \HH^1(k,\Aff(X,\mu_p))$ given in Lemma \ref{dfp}, $\partial\Bigl(\frac{h(\varphi(Q))}{h_\xi(Q)}\Bigr)$ corresponds to the class of the cocycle \[ \eta:G_K \ni\sigma \mapsto \left(\frac{h(\varphi(Q))}{h_\xi(Q)}\right)^\sigma\left(\frac{h_\xi(Q)}{h(\varphi(Q))}\right) \in \mu_p(\bF) = \Map(X,\mu_p)\,,\] which a priori takes values in $\Aff(X,\mu_p) \subset \mu_p(\bF)$. We need to show that the image of this cocycle under the map induced by $\Aff(X,\mu_p) \to E[p]$ is cohomologous to $\xi$. For this we make use of the following commutative diagram
\begin{align}
\label{digram}
\xymatrix{ \Aff(X,\mu_p) \ar@{^{(}->}[r]\ar[d]& \mu_p(\bF) \ar@{^{(}->}[r]\ar[d]& \bF^\times \ar[d] \\
		E[p] \ar@{^{(}->}[r]& \mu_p(\bF)/\mu_p \ar@{^{(}->}[r]& \bF^\times/\bK^\times }
\end{align}
Since the horizontal maps are all injective, it will be enough to show that, for any $\sigma \in G_K$, the images of $\xi_\sigma$ and $\eta_\sigma$ in the lower-right corner are equal. For this we will make use of the preceding lemma.

Using the fact that $h$ and $h_\xi$ are defined over $H$ and rearranging, we have \[ \eta_\sigma = \left(\frac{h(\varphi^\sigma(Q^\sigma))}{h(\varphi(Q))}\right) \left(\frac{h_\xi(Q)}{h_\xi(Q^\sigma)}\right)\,.\] Making use of the fact that $\varphi^\sigma(Q^\sigma) = \varphi(Q^\sigma) + \xi_\sigma$ we can rewrite this as 

\[ \eta_\sigma = \left(\frac{h(\varphi(Q) + \xi_\sigma +(\varphi(Q^\sigma)-\varphi(Q))\,)}{h(\varphi(Q))}\right) \left(\frac{h_\xi(Q^\sigma + (Q - Q^\sigma))}{h_\xi(Q^\sigma)}\right)\,.\] By Lemma \ref{WPLemma} this represents the image of \[ \xi_\sigma + (\varphi(Q^\sigma) - \varphi(Q)) - (Q^\sigma-Q) \in E[p]\] under the embedding given by the bottom row of (\ref{digram}). But \[(\varphi(Q^\sigma)-\varphi(Q)) - (Q^\sigma-Q) = 0_E\] (see \cite[X.3.5]{Silverman}) so the images of $\eta_\sigma$ and $\xi_\sigma$ in the lower right corner of (\ref{digram}) are equal. From this the proposition follows.
\hspace*{\fill}\nobreak$\Box$\par\hspace{2mm}

We can use a similar argument to prove the following useful lemma. This says that we could use $\ell$ to perform descent on $E = \Jac(C)$. In practice, one is likely to have produced $C$ by performing a descent on $E$, so this is not going to yield anything new. It does however allow us to relate the descent on $C$ to the descent on $E$.

\begin{Lemma}
\label{DescentOnE} The following diagram is commutative.
\[ \xymatrix{ \Pic^0(C)\ar@{=}[rrr] \ar[d]^{\Phi}  &&& E(K) \ar[d]^{\delta_E}\\ \frac{(\partial\bF^\times)^{G_K}}{K^\times\partial F^\times} \ar[rr]^{\tilde{\Phi}_0^{-1}} 
&& C^\perp \ar@{^{(}->}[r] & \HH^1(K,E[p])}
\]
Here $\delta_E$ is the connecting homomorphism from the Kummer sequence associated to $E$, and the composition of the bottom row is the map identifying $(\partial\bF^\times)^{G_K}/K^\times\partial F^\times$ with $C^\perp \subset \HH^1(K,E[p])$.
\end{Lemma}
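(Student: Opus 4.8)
The plan is to reduce the statement to an explicit cocycle computation, carried out in the spirit of Lemma~\ref{WPLemma} and the proof of Proposition~\ref{Phiphs}. Fix $P\in E(K)$, identified with its class in $\Pic^0(C)$, choose an isomorphism $\psi\colon C\to E$ over $\bK$ with $p\circ\psi=\rho$, and choose $\tilde P\in E(\bK)$ with $p\tilde P=P$, so that $\delta_E(P)$ is the class of the cocycle $\sigma\mapsto c_\sigma':=\tilde P^\sigma-\tilde P\in E[p]$. For any point $Q$ on $C$ the class $[Q+\tilde P]-[Q]$ is $\tilde P$, so $p$ times it is $P$; hence, for a sufficiently general $\bK$-point $B$ of $C$ (avoiding the flex points and $\diw(u)$, and with $B+\tilde P$ also not a flex), the geometric divisor $d_0:=p\bigl([B+\tilde P]-[B]\bigr)$ lies in the class of $P$. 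By Proposition~\ref{DescentOnPic}, $\Phi(P)$ is the class of $\ell$ evaluated at any $K$-rational divisor in that class.

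The key algebraic input is that, writing $h_x$ for the component of $\tilde\ell$ cutting out $p[x]\in Y_1$, comparison of divisors on the genus one normal curve $C$ forces $\tilde\ell_y^{\,p}=c_y\,h_x^{p-2}h_{x+P'}h_{x-P'}$ (for a constant $c_y\in\bK^\times$) at the component $y=(p-2)[x]+[x+P']+[x-P']\in Y_2$. Evaluating at $d_0$ and using $\tilde\ell_y(p[T]-p[S])=(\tilde\ell_y(T)/\tilde\ell_y(S))^p$ then gives, after $c_y$ cancels, $\tilde\ell(d_0)=\partial(\alpha)$ on the nose, with $\alpha=\bigl(h_x(B+\tilde P)/h_x(B)\bigr)_{x\in X}\in\bF^\times$. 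Passing from $d_0$ to a $K$-rational divisor $d$ in the same class (available by our standing hypothesis $\Pic(C)=\Pic(\bC)^{G_K}$) and running the Weil-reciprocity argument from the proof of Proposition~\ref{DescentOnPic} alters $\alpha$ only by a constant in $\bK^\times$ and by a factor $(g(\omega))_{\omega\in X}$ with $d=d_0+\diw(g)$; hence $\ell(d)=\partial(\tilde\alpha)\in(\partial\bF^\times)^{G_K}$. This both shows that $\Phi$ restricts to a map into $(\partial\bF^\times)^{G_K}/K^\times\partial F^\times$ and, by Lemma~\ref{dfp} and Corollary~\ref{desc2}, identifies $\tilde\Phi_0^{-1}(\Phi(P))$ with the class of the cocycle $\sigma\mapsto\tilde\alpha^\sigma/\tilde\alpha\in\Aff(X,\mu_p)$ pushed to $E[p]$ by the linear-part/Weil-pairing map.

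Finally I would match this with $c'$. Under the projection to $E[p]$ the constant-map factor of $\tilde\alpha$ disappears, and the factor $(g(\omega))_\omega$ contributes only a coboundary (it records the choice of $d$, to which $\Phi(P)$ is insensitive), so it suffices to project $\sigma\mapsto\alpha^\sigma/\alpha$. Using $h_x^\sigma=h_{x^\sigma}$ and $(B+\tilde P)^\sigma=B^\sigma+\tilde P^\sigma$ one gets $(\alpha^\sigma/\alpha)(x)=\dfrac{h_x(B^\sigma+\tilde P^\sigma)\,h_x(B)}{h_x(B^\sigma)\,h_x(B+\tilde P)}$, and a computation parallel to the proof of Lemma~\ref{WPLemma} identifies its class in $\mu_p(\bF)/\mu_p$ with the image of $\psi(B^\sigma+\tilde P^\sigma)-\psi(B^\sigma)-\bigl(\psi(B+\tilde P)-\psi(B)\bigr)$; since $\psi(Q^\sigma)=\psi(Q)^\sigma-c_\sigma$ and $\psi(Q+\tilde P)=\psi(Q)+\tilde P$ (with $c_\sigma=\psi^\sigma-\psi$ representing $C$), this collapses to $\tilde P^\sigma-\tilde P=c_\sigma'$, the $c_\sigma$-terms cancelling exactly as $(\varphi(Q^\sigma)-\varphi(Q))-(Q^\sigma-Q)=0_E$ does at the end of the proof of Proposition~\ref{Phiphs}. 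Hence $\tilde\Phi_0^{-1}(\Phi(P))=\delta_E(P)$ — which also shows $\delta_E(E(K))\subseteq C^\perp$ — and the diagram commutes. The main obstacle is precisely this last step: isolating the Weil-pairing part by controlling the $\bK^\times$-constants and the choice-of-$d$ coboundary, and watching it collapse to $c'$; but each ingredient has an exact precedent in Section~\ref{TheDescentMap} and in Lemma~\ref{WPLemma}.
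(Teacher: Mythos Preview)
Your approach differs from the paper's in a key way: the paper works throughout with a $p$-covering $(D,\pi)$ of $C$, lifting the points in the support of a $K$-rational divisor $d$ to points $Q_{i,j}$ on $D$, and then uses the relation $\tilde\ell\circ\pi=\tilde\Phi((D,\pi))\cdot\partial h$ with $h$ a linear form on $D$. This puts everything in the setting where Lemma~\ref{WPLemma} applies directly. You instead try to stay on $C$, using the identity $\tilde\ell_y^{\,p}=c_y\,h_x^{p-2}h_{x+P'}h_{x-P'}$ among linear forms on $C\subset\PP^{p-1}$, and the special divisor $d_0=p([B+\tilde P]-[B])$.

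Your setup is correct as far as it goes: the identity holds, and evaluating at $d_0$ does give $\ell(d_0)=\partial\alpha$ with $\alpha_x=h_x(B+\tilde P)/h_x(B)$. But the final step has a genuine gap. Lemma~\ref{WPLemma} concerns a linear form $h$ on a $p$-covering $D$: the functions $h_x/h_{x_0}$ there have divisor $\pi^*[x]-\pi^*[x_0]$, which under $\psi_D$ becomes $[p]^*[\psi(x)]-[p]^*[0_E]$ on $E$, exactly what the Weil pairing needs. Your $h_x$ live on $C$ and cut out $p[x]$; the corresponding functions on $E$ have divisor $p[\psi(x)]-p[0_E]$, which is \emph{not} the pull-back divisor. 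The translation-quotient $h_x(Q+R)/h_x(Q)$ for these functions is not constant in $Q$ and does not compute $e_p$ in the way Lemma~\ref{WPLemma} does, so there is no ``parallel computation'' to invoke.

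There is also a problem one step earlier. Since $d_0$ is not $K$-rational, $\partial(\alpha)$ is not $G_K$-invariant, so $\alpha^\sigma/\alpha$ does not lie in $\mu_p(\bF)$, let alone $\Aff(X,\mu_p)$. Only $\tilde\alpha^\sigma/\tilde\alpha$ does. Your decomposition $\tilde\alpha=\alpha\cdot(g(x))_x\cdot(\text{const})$ is fine in $\bF^\times$, but the individual factors of $\tilde\alpha^\sigma/\tilde\alpha$ do not land in $\Aff(X,\mu_p)$ separately, so saying that $(g(\omega))_\omega$ ``contributes only a coboundary'' and that it ``suffices to project $\alpha^\sigma/\alpha$'' is not meaningful: you cannot project a class in $\bF^\times/\bK^\times$ to $E[p]$ unless it already lies in the image of $\mu_p(\bF)/\mu_p$. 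In fact a direct computation shows that the ``extra'' factors you hope to discard contribute nontrivially (they involve $p$-th powers of values of auxiliary functions at the flex points), and disentangling them from the part carrying $c'_\sigma$ requires precisely the covering-based computation the paper performs.
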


\begin{Proof}
Let $\Xi \in \Pic^0(C)$ and choose a representative $d \in \Div(C)$ whose support is disjoint from $X$ and any zeros of $u$. Write $d$ as a difference $d=d_1 - d_2$ of effective divisors and write each $d_i$ as a sum $d_i = \sum_{j=1}^n P_{i,j}$ of $n = \deg(d_1)=\deg(d_2)$ (possibly non-distinct) points on $C$. Now choose any $(D,\pi) \in \Cov_0^{(p)}(C/\bK)$ and a linear form $h$ with coefficients in $\bF$ defining the pull-back of the generic flex. For each $P_{i,j}$ in the support of $d$, choose a point $Q_{i,j} \in D$ such that $\pi(Q_{i,j})=P_{i,j}$. These choices are such that, as points on $E$,
\[ p\bigl( Q_{i,j}-Q_{i',j'} \bigr) = \bigl( P_{i,j}-P_{i',j'}\bigr)\,,\] for any $i,j,i',j'$. In particular, $p\sum_{j=1}^n (Q_{1,j} - Q_{2,j}) = d$. So the image of $\Xi$ under the connecting homomorphism is given by the cocycle
\[ \sigma \mapsto \left(\sum_{j=1}^n (Q_{1,j}^\sigma - Q_{2,j}^\sigma) - \sum_{j=1}^n (Q_{1,j} - Q_{2,j})\right) \in E[p]\,.\]

On the other hand, the image of $\Xi$ under $\Phi$ is represented by 
\[\frac{\ell(d_1)}{\ell(d_2)} =\prod_{j=1}^n\frac{\ell(P_{1,j})}{\ell(P_{2,j})}\,.\]  Choose homogeneous coordinates for the $P_{i,j}$ which are compatible with the action of the Galois group (i.e. so that applying $\sigma$ to the coordinates of $P_{i,j}$ gives the homogeneous coordinates for $P_{i,j}^\sigma$). The class of $\Phi(\Xi)$ is then also represented by \[ \prod_j\frac{\tilde{\ell}(P_{1,j})}{\tilde{\ell}(P_{2,j})} \in H^\times\,,\] where $\tilde{\ell}(P_{i,j})$ now means evaluating the linear form on the given choice of homogeneous coordinates for $P_{i,j}$.

In the coordinate ring of $D$ (over $\bH$) we have $\tilde{\ell}\circ\pi = \tilde{\Phi}((D,\pi))\cdot \partial h$. We can fix forms defining the covering map $\pi$ and choose homogeneous coordinates for the $Q_{i,j}$ so that the equality $\pi(Q_{i,j})=P_{i,j}$ is also true for the coordinates chosen. Now since $\deg(d)=0$, we have that \[ \prod_{j=1}^n\frac{\tilde{\ell}(P_{1,j})}{\tilde{\ell}(P_{2,j})} = \prod_{j=1}^n \frac{\partial h(Q_{1,j})}{\partial h(Q_{2,j})} =  \partial\left(\prod_{j=1}^n \frac{h(Q_{1,j})}{h(Q_{2,j})}\right)  \in \partial \bF^\times\,,\] whereby $h(Q_{i,j})$ means evaluating $h$ at the given choice of coordinates.

Under the isomorphism $(\partial\bF^\times)^{G_K}/K^\times\partial F^\times \simeq \HH^1(K,\Aff(X,\mu_p))/K^\times$, $\Phi(\Xi)$ is sent to the class of the cocycle
\[ \sigma \mapsto \alpha^\sigma/\alpha\,,\] where $\alpha \in \bF^\times$ is any element such that $\partial \alpha$ represents $\Phi(\Xi)$. The argument above shows we may take $\alpha = \left(\prod_{j=1}^n \frac{h(Q_{1,j})}{h(Q_{2,j})}\right)$. Hence, the image of $\Xi$ in $\HH^1(K,\Aff(X,\mu_p))/K^\times$ is represented by the cocycle $\eta$ sending $\sigma \in G_K$ to 
\begin{align*}
\eta_\sigma &= \left(\prod_{j=1}^n \frac{h(Q_{1,j})}{h(Q_{2,j})}\right)^\sigma\cdot \left(\prod_{j=1}^n \frac{h(Q_{2,j})}{h(Q_{1,j})}\right)\\ &= \left(\prod_{j=1}^n \frac{h^\sigma(Q_{1,j}^\sigma)}{h^\sigma(Q_{2,j}^\sigma)}\right)\cdot \left(\prod_{j=1}^n \frac{h(Q_{2,j})}{h(Q_{1,j})}\right)\\
&= \left(\prod_{j=1}^n \frac{h^\sigma(Q_{2,j}^\sigma +(Q_{1,j}^\sigma-Q_{2,j}^\sigma))}{h^\sigma(Q_{2,j}^\sigma)}\right)\cdot \left(\prod_{j=1}^n \frac{h(Q_{1,j}+(Q_{2,j}-Q_{1,j}))}{h(Q_{1,j})}\right)
\end{align*}
Applying Lemma \ref{WPLemma} as in the proof of the proposition, to each factor appearing, we see that the image of $\eta_\sigma$ in $\HH^1(K,E[p])$ is equal to the class of the cocycle \[ G_K\ni \sigma \mapsto \sum_{j=1}^n\left((Q_{1,j}^\sigma-Q_{2,j}^\sigma) - ((Q_{1,j}-Q_{2,j})\right) \in E[p]\,.\] This is the same as the image under the connecting homomorphism, so the diagram commutes.
\end{Proof}

\subsection{Image of the descent map}
\label{ImageOfTheDescentMap}
From here on use $\mathcal{H}_K$ to denote the image of $\Cov_0^{(p)}(C/K)$ under the descent map and $\mathcal{H}_K^0$ for $(\partial\bF^\times)^{G_K}/K^\times\partial F^\times$. We know that $\mathcal{H}_K$ is a coset of $\mathcal{H}_K^0$ inside $H^\times/K^\times\partial F^\times$. Recall also that Corollary \ref{desc2} gives an isomorphism $C^\perp \simeq \mathcal{H}_K^0$. 

In practice one prefers to work with representatives in $H^\times$. We set $\tilde{\mathcal{H}}_K^0 = (\partial{\bF}^\times)^{G_K}\subset H^\times$. To achieve the same for $\mathcal{H}_K$, let $P \in C(\bK)$ be any point that is neither a flex nor a zero of $u$. Define $\tilde{\mathcal{H}}_K = (\ell(P)\cdot \partial{\bF}^\times)^{G_K}$. That this does not depend on the choice for $P$ is shown in the proof below.

\begin{Lemma}
\label{tildeH}
$\mathcal{H}_K=\tilde{\mathcal{H}}_K/K^\times\partial F^\times$
\end{Lemma}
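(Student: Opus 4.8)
The plan is to unwind the various identifications of Sections~\ref{AffineStructure} and~\ref{TheDescentMap} and show that the ad hoc representative $\ell(P)$ is the image of a covering under $\tilde\Phi$, and conversely. First I would establish that $\tilde{\mathcal{H}}_K$ is well-defined, i.e. independent of the auxiliary point $P$. If $P'$ is another non-flex, non-zero-of-$u$ point on $C$ over $\bK$, then $[P] - [P']$ is a degree-$0$ divisor on $\bC$; since $\Pic^0(\bC)^{G_K} = E(K)$ and we are free to adjust by a $G_K$-invariant divisor, the ratio $\ell(P)/\ell(P')$ lies in $\partial\bF^\times$ (it represents $\Phi$ of a degree-$0$ class, which by Lemma~\ref{DescentOnE} lands in $\mathcal{H}_K^0$, i.e. is represented by an element of $\partial\bF^\times$). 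Hence $(\ell(P)\cdot\partial\bF^\times)^{G_K} = (\ell(P')\cdot\partial\bF^\times)^{G_K}$, so $\tilde{\mathcal{H}}_K$ does not depend on $P$. A minor point to check here is that $\ell(P)\in\bH^\times$ for a generic $P$, which is immediate since $P$ avoids all zeros and poles of $\ell$.

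Next I would prove the inclusion $\tilde{\mathcal{H}}_K/K^\times\partial F^\times \subseteq \mathcal{H}_K$. Fix any $(D,\pi)\in\Cov_0^{(p)}(C/\bK)$ defined over $\bK$ — this exists since over $\bK$ all obstructions vanish — with its model in $\PP^{p^2-1}$ and a linear form $h$ (coefficients in $\bF$) defining the pull-back of the generic flex, so that $\tilde\ell\circ\pi = \Delta\cdot\partial h$ in the coordinate ring of $D$ for some $\Delta\in\bH^\times$ (using the relation from the proof of Theorem~\ref{PhionCov}). Pick a point $Q\in D(\bK)$ with $\pi(Q)=P$ and with $h(Q)$ defined and nonzero. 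Then specializing gives $\tilde\ell(P) = \ell(P)u(P) = \Delta\cdot\partial h(Q)\cdot(\text{something in }K^\times)$, so modulo $K^\times\partial F^\times$ we get $\ell(P)\equiv\Delta$, where $\Delta\in\bH^\times$ represents $\tilde\Phi$ of the covering $(D,\pi)$ over $\bK$. Now given any $g\in\bF^\times$ with $\ell(P)\cdot\partial g\in H^\times$, the element $\ell(P)\partial g$ represents $\Delta\cdot\partial g$; one checks that $\Delta\cdot\partial g$ is the image under $\tilde\Phi$ of the twist $(D,\pi)\cdot\xi$ for the appropriate $\xi\in C^\perp$, by Proposition~\ref{Phiphs} together with the description of $\tilde\Phi_0$ and the isomorphism $C^\perp\simeq(\partial\bF^\times)^{G_K}/K^\times\partial F^\times$ of Corollary~\ref{desc2}. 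The $G_K$-invariance of $\ell(P)\partial g$ is exactly what forces $(D,\pi)\cdot\xi$ (and hence $\xi$, up to the action) to be defined over $K$, so the twisted covering lies in $\Cov_0^{(p)}(C/K)$ and $\Delta\cdot\partial g\in\mathcal{H}_K$.

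For the reverse inclusion $\mathcal{H}_K\subseteq\tilde{\mathcal{H}}_K/K^\times\partial F^\times$, take $(D,\pi)\in\Cov_0^{(p)}(C/K)$; by the defining property of $\tilde\Phi$ in Theorem~\ref{PhionCov} with $L=\bK$ and any $Q\in D(\bK)$, the class $\tilde\Phi((D,\pi))$ is represented by $\Phi(\pi(Q))$, which (away from the bad locus) is $\ell(\pi(Q))\in\ell(P_0)\cdot(\text{stuff})$; more precisely, choosing $P=\pi(Q)$ as the auxiliary point, $\tilde\Phi((D,\pi))$ has a representative of the form $\ell(P)\partial g$ for some $g\in\bF^\times$, and this representative is $G_K$-invariant precisely because $(D,\pi)$ is defined over $K$ (trace through the cocycle computation as in Proposition~\ref{Phiphs}, or simply note $\tilde\Phi((D,\pi))\in H^\times/K^\times\partial F^\times$ has a representative in $H^\times$ and that representative differs from $\ell(P)\partial g$ by something in $\bF^\times$ mapping into $\partial\bF^\times$). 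Hence $\tilde\Phi((D,\pi))$ lands in $\tilde{\mathcal{H}}_K/K^\times\partial F^\times$.

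The main obstacle will be the second paragraph: carefully matching the element $\Delta\cdot\partial g\in\bH^\times$ with $\tilde\Phi$ of a genuine twist and then arguing that $G_K$-invariance of the representative descends the twisted covering to $K$. This requires combining Proposition~\ref{Phiphs}, the isomorphism of Corollary~\ref{desc2}, and Lemma~\ref{dfp} coherently, and keeping track of exactly which quotient ($K^\times\partial F^\times$ versus $\partial F^\times$) one is working modulo at each stage. Everything else is bookkeeping with the identifications already set up. A cleaner alternative for the whole lemma, which I would likely adopt to streamline the write-up, is: $\mathcal{H}_K$ and $\tilde{\mathcal{H}}_K/K^\times\partial F^\times$ are both cosets of $\mathcal{H}_K^0 = \tilde{\mathcal{H}}_K^0/K^\times\partial F^\times$ inside $H^\times/K^\times\partial F^\times$ (the former by Corollary~\ref{Injective}, the latter by construction), so it suffices to exhibit one common element — and $\ell(P)$ for $P=\pi(Q)$, $Q\in D(\bK)$, $(D,\pi)\in\Cov_0^{(p)}(C/\bK)$ defined over $K$ if one is so lucky, or more robustly the computation of the first two paragraphs, provides it.
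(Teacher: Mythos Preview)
Your overall structure matches the paper's: establish well-definedness of $\tilde{\mathcal{H}}_K$, then show both sets are cosets of $\mathcal{H}_K^0$ and exhibit a containment. The containment $\mathcal{H}_K\subseteq\tilde{\mathcal{H}}_K/K^\times\partial F^\times$ (your third paragraph) is handled exactly as in the paper: specialize the defining relation $\tilde\ell\circ\pi=\Delta\,\partial h$ at a point $Q\in D(\bK)$ and observe that $\Delta$ is $G_K$-invariant and lies in $\ell(\pi(Q))\cdot\partial\bF^\times$.

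The gap is in your second paragraph, and you have correctly flagged it yourself. You try to show $\tilde{\mathcal{H}}_K/K^\times\partial F^\times\subseteq\mathcal{H}_K$ by a twisting/descent argument: start with $(D,\pi)$ over $\bK$, twist by some $\xi$, and argue that $G_K$-invariance of the algebraic representative forces the twisted covering to descend to $K$. But Proposition~\ref{Phiphs} only tells you how $\tilde\Phi$ behaves under twisting by elements of $C^\perp\subset\HH^1(K,E[p])$ applied to coverings \emph{already defined over $K$}; it does not give you a mechanism for descending a covering from $\bK$ to $K$. Over $\bK$ the group $\HH^1(\bK,E[p])$ is trivial, so there is nothing to twist by there, and your $\xi$ has no home until you already know a $K$-rational covering exists. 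Your closing remark ``defined over $K$ if one is so lucky'' is precisely the issue: what is missing is the implication $\tilde{\mathcal{H}}_K\neq\emptyset\Rightarrow\Cov_0^{(p)}(C/K)\neq\emptyset$.

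The paper resolves this by a forward reference to Section~\ref{InverseOfTheDescentMap}, where an explicit $C$-scheme $(D_\Delta,\pi_\Delta)$ is constructed from any $\Delta\in\tilde{\mathcal{H}}_K$ and shown to be a $p$-covering with trivial obstruction. Once that is available, $\mathcal{H}_K\neq\emptyset$, and since both sets are cosets of $\mathcal{H}_K^0$, the single containment you proved gives equality. So your ``cleaner alternative'' is in fact the paper's argument, but the common element has to be supplied by the inverse construction rather than by luck.

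A minor remark on well-definedness: your appeal to Lemma~\ref{DescentOnE} is slightly off, since that lemma concerns $\Pic^0(C)$ over $K$ whereas here $P,P'\in C(\bK)$. The paper instead computes $\ell(P)/\ell(P')$ directly from the relation $\ell\circ\pi=\Delta(\partial h/u\circ\pi)$ for a covering over $\bK$, obtaining $(u(P')/u(P))\cdot\partial(h(Q)/h(Q'))\in\bK^\times\partial\bF^\times=\partial\bF^\times$. Your computation in the second paragraph already contains this; just use it for well-definedness rather than routing through Lemma~\ref{DescentOnE}.
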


\begin{Proof} To show that $\tilde{\mathcal{H}}_K$ does not depend on $P$, let $P'\in C(\bK)$ be any point which is neither a zero nor a pole of $\ell$. Choose $(D,\pi) \in \Cov_0^{(p)}(\bC/\bK)$ ({\small\sc nb:} over $\bK$, not $K$). Fixing a model for $(D,\pi)$ and choosing a linear form $h$ defining the pullback of the generic flex, we get a relation $\ell\circ\pi = \Delta (\partial h/u\circ\pi)$ in $\kappa(D\otimes_\bK\bH)$. Choosing points $Q$, $Q'$ lying above $P$ and $P'$ we get that \[ \ell(P)/\ell(P') = 
\frac{\partial h(Q) \cdot u(P')}{\partial h(Q')\cdot u(P)} = \left(\frac{u(P')}{u(P)}\right)\cdot\partial\left(\frac{h(Q)}{h(Q')}\right) \in \bK^\times \partial\bF^\times = \partial\bF^\times\,. \] It follows that the coset $\ell(P)\cdot\partial\bF^\times$ does not depend on $P$. Hence neither does its $G_K$-invariant subset.

Clearly if $\tilde{\mathcal{H}}_K$ is nonempty, then it is a coset of $\tilde{\mathcal{H}}^0_K$. So it will suffice to show that \[ \left(\,\tilde{\mathcal{H}}_K \ne \emptyset\,\right) \Longrightarrow \left(\,\emptyset \ne \mathcal{H}_K \subset \tilde{\mathcal{H}}_K/K^\times\partial F^\times\,\right)\,.\] In Section \ref{InverseOfTheDescentMap} we show how to construct representatives for elements of $\Cov_0^{(p)}(C/K)$ from elements of $\tilde{\mathcal{H}}_K$, so we will assume $\mathcal{H}_K \ne \emptyset$.

To show containment, let $(D,\pi) \in \Cov_0^{(p)}(C/K)$. Its image in $\mathcal{H}_K$ is defined by a relation in the coordinate ring of $D$ of the form $\tilde{\ell}\circ\pi = \Delta\partial h$. Evaluating at any point $Q \in D$ not lying above a flex or zero of $u$, we see that $\Delta \in \ell(\pi(Q))\cdot\partial\bF^\times$. But we know $\Delta$ is Galois invariant, so it must lie in $(\ell(\pi(Q))\cdot\partial\bF^\times)^{G_K} = \tilde{\mathcal{H}}_K$.
\end{Proof}

For algebraic extensions the next lemma follows easily from the fact that $\tilde{\mathcal{H}}_K$ is defined by taking $G_K$-invariants. The value of this description is that it shows that non-membership in $\mathcal{H}_K$ is stable under base change. 

\begin{Lemma}
\label{defer}
Suppose that $K'$ is an extension of $K$ and $\Delta \in H^\times$ is such that $\Delta \otimes_K 1$ represents a class in $\mathcal{H}_{K'}$. Then the class of $\Delta$ in $H^\times/K^\times\partial F^\times$ lies in $\mathcal{H}_K$.
\end{Lemma}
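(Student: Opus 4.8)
The plan is to reduce the statement to the definition of $\tilde{\mathcal{H}}_K = (\ell(P)\cdot\partial\bar{F}^\times)^{G_K}$ from Lemma \ref{tildeH} and to exploit the fact that $\tilde{\mathcal{H}}_K$ is cut out inside $\ell(P)\cdot\partial\bar F^\times$ purely by a Galois-invariance condition. Concretely, suppose $\Delta\in H^\times$ is such that $\Delta\otimes_K 1$ represents a class in $\mathcal{H}_{K'}$. By Lemma \ref{tildeH} applied over $K'$, the element $\Delta\otimes_K 1\in H_{K'}^\times$ lies in $\tilde{\mathcal{H}}_{K'} = (\ell(P)\cdot\partial\bar F^\times)^{G_{K'}}$; note here that $\bar F$, the Weil pairing, the linear forms $\tilde\ell,u$ and hence $\ell$ and the coset $\ell(P)\cdot\partial\bar F^\times$ are all defined already over $\bar K$ independently of the base field, so the only thing that changes under base change is which Galois group we take invariants for. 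In particular $\Delta$ (viewed in $\bar F^\times$, or rather $\bar H^\times$, via the base-change-independent identifications) lies in the coset $\ell(P)\cdot\partial\bar F^\times$.

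First I would record the key point that $\Delta\in H^\times$ already, so $\Delta$ is $G_K$-invariant to begin with; the hypothesis only adds the information that $\Delta$, as an element of $\bar H^\times$, lies in the particular coset $\ell(P)\cdot\partial\bar F^\times$ of $\partial\bar F^\times$. Combining $G_K$-invariance of $\Delta$ with membership in $\ell(P)\cdot\partial\bar F^\times$ gives $\Delta\in(\ell(P)\cdot\partial\bar F^\times)^{G_K}=\tilde{\mathcal{H}}_K$. Then Lemma \ref{tildeH} (now over $K$) says $\mathcal{H}_K=\tilde{\mathcal{H}}_K/K^\times\partial F^\times$, so the class of $\Delta$ in $H^\times/K^\times\partial F^\times$ lies in $\mathcal{H}_K$, as desired. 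One should also be slightly careful that Lemma \ref{tildeH} over $K$ requires $\mathcal{H}_K\ne\emptyset$ to identify it with $\tilde{\mathcal{H}}_K/K^\times\partial F^\times$; but the implication proved in that lemma is precisely $\tilde{\mathcal{H}}_K\ne\emptyset\Rightarrow\mathcal{H}_K\ne\emptyset$ and $\mathcal{H}_K\subset\tilde{\mathcal{H}}_K/K^\times\partial F^\times$, and here we have just exhibited an element of $\tilde{\mathcal{H}}_K$, so $\tilde{\mathcal{H}}_K\ne\emptyset$ and the conclusion follows. (Invoking the construction in Section \ref{InverseOfTheDescentMap} referenced there, or simply the equality $\mathcal{H}_K=\tilde{\mathcal H}_K/K^\times\partial F^\times$, closes this.)

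The only genuinely delicate step is the bookkeeping in the first paragraph: one must check that the coset $\ell(P)\cdot\partial\bar F^\times\subset\bar H^\times$ is genuinely independent of the base field, i.e. that forming $\tilde{\mathcal{H}}_{K'}$ over $K'$ amounts to taking $G_{K'}$-invariants of the very same coset used to form $\tilde{\mathcal{H}}_K$. This is immediate once one recalls that $\bar F=\Map(X,\bar K)$, $\bar H$, the flex set $X$, the Weil pairing on $E[p]$, and the linear forms $\tilde\ell$, $u$, $h$ are all defined over $\bar K$ and do not depend on whether we regard $C$ as defined over $K$ or over $K'$; this is exactly what makes the proof of independence of $P$ in Lemma \ref{tildeH} go through over any field. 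Given this, the argument is a two-line diagram chase and the substance is entirely in the already-established Lemma \ref{tildeH}. I expect the main (very minor) obstacle to be stating the base-change compatibility cleanly rather than any real mathematical difficulty.
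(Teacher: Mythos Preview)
Your argument is precisely the one the paper itself flags just before the lemma as handling only the algebraic case: ``For algebraic extensions the next lemma follows easily from the fact that $\tilde{\mathcal{H}}_K$ is defined by taking $G_K$-invariants.'' The whole point of the lemma, however, is the non-algebraic case, since in the intended application one takes $K=k$ a number field and $K'=k_v$ a completion, which is transcendental over $k$.

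The gap is in your final paragraph. You assert that $\bar F=\Map(X,\bar K)$ and the coset $\ell(P)\cdot\partial\bar F^\times$ are independent of the base field, but this is false when $K'/K$ is not algebraic: over $K'$ one has $\bar F_{K'}=\Map(X,\bar K')$ with $\bar K'\supsetneq\bar K$, so the coset $\ell(P)\cdot\partial\bar F_{K'}^\times$ used to define $\tilde{\mathcal H}_{K'}$ is strictly larger than $\ell(P)\cdot\partial\bar F_K^\times$. Knowing $\Delta\in\ell(P)\cdot\partial\bar F_{K'}^\times$ therefore does not immediately give $\Delta\in\ell(P)\cdot\partial\bar F_K^\times$. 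The paper bridges this by writing $\Delta/\ell(P)=\partial\alpha$ with $\alpha=(\alpha_x)\in\prod_{x\in X}\bar K'^\times$ and looking at the $Y_1$-components, where $\partial_1$ is the $p$-th power map: one gets $\alpha_x^p=\Delta_x/\ell_x(P)\in\bar K^\times$, so each $\alpha_x$ is algebraic over the algebraically closed field $\bar K$, hence $\alpha_x\in\bar K$. Thus $\alpha\in\bar F_K^\times$ after all, and from there your argument (Galois invariance of $\Delta$ plus Lemma~\ref{tildeH}) finishes the job.
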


\begin{proof}
The assumptions imply that in $(\bH \otimes_{\bK}\bK')^\times$ we have
\[ \frac{\Delta}{\ell(P)} \otimes 1 = \partial \alpha\,, \] for some $\alpha \in (\bF \otimes_{\bK}\bK')^\times \simeq \prod_{x\in X}\bK'^\times$ depending on $P \in C(\bK)$. For $x \in X$ we have $\Delta_x/\ell_x(P) \otimes 1 = \alpha_x^\ell$ in $\bK \otimes \bK'$. This shows that $\alpha_x$ is algebraic over $\bK$, and hence lies in $\bK$. Thus $\alpha \in \bF$ and we have $\Delta = \ell(P)\partial(\alpha)$ in $\bH$. This shows that $\Delta$ represents a class in $\mathcal{H}_K$.
\end{proof}

The elements in $\mathcal{H}_K$ satisfy certain norm conditions. These will be used in the algorithm presented in Section \ref{ComputingTheSelmerSet}.

\begin{Lemma}
\label{candbeta}
There exist $c \in K^\times$ and $\beta \in H_2^\times$ such that any $(\delta,\eps) \in F^\times\times H_2^\times \simeq H^\times$ representing a a class in $\mathcal{H}_K$ satisfies $N_{F/K}(\delta) \equiv c \Mod K^{\times p}$ and $\partial_2(\delta) = \beta\eps^p$.
\end{Lemma}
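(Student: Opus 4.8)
The plan is to produce $c$ and $\beta$ directly from the structure already in place and then verify the two congruences for an arbitrary representative of a class in $\mathcal{H}_K$. Recall from Lemma \ref{tildeH} that $\mathcal{H}_K = \tilde{\mathcal{H}}_K/K^\times\partial F^\times$ where $\tilde{\mathcal{H}}_K = (\ell(P)\cdot\partial\bF^\times)^{G_K}$ for any point $P \in C(\bK)$ outside $X$ and $\diw(u)$. So every class in $\mathcal{H}_K$ has a representative in $H^\times$ of the form $\ell(P)\partial(\alpha)$ with $\alpha \in \bF^\times$, and a general $(\delta,\eps)$ representing such a class differs from this by an element of $K^\times\partial F^\times$. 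The idea is to compute the norm $N_{F/K}$ and the second component map $\partial_2$ on the fixed representative $\ell(P)\partial(\alpha)$, observe that the ambiguity $K^\times\partial F^\times$ only changes the norm by a $p$-th power and changes $\partial_2(\delta)$ by a $p$-th power in $H_2^\times$, and read off the definitions of $c$ and $\beta$.

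First I would treat the norm condition. Write the $H$-component of $\mathcal{H}_K$-representatives in the splitting $H \simeq F \times H_2$ as $(\delta,\eps)$; recall $\partial = \partial_1\times\partial_2$ with $\partial_1\colon F \ni \alpha \mapsto \alpha^p$. Taking $\ell = \tilde\ell/u$, the first component $\delta$ of $\ell(P)\partial(\alpha)$ equals $\tilde\ell_1(P)\,\alpha^p/u(P)$, where $\tilde\ell_1 \in F[u_1,\dots,u_p]$ is the $F$-component of $\tilde\ell$ (i.e. the linear form cutting out the family $(p[x])_{x\in X}$). Then $N_{F/K}(\delta) = N_{F/K}(\tilde\ell_1(P))\cdot N_{F/K}(\alpha)^p/u(P)^{[F:K]} = N_{F/K}(\tilde\ell_1(P))/u(P)^{p^2} \pmod{K^{\times p}}$. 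The key point is that $N_{F/K}(\tilde\ell_1(P))$ is, up to $p$-th powers and up to the $u(P)$ factor, independent of $P$: the product $\prod_{x\in X}\tilde\ell_{1,x}$ of the $p^2$ conjugate linear forms is a $G_K$-invariant form of degree $p^2$ on $\PP^{p-1}$, and two such forms restricted to $C$ differ multiplicatively by a $p$-th power of a rational function on $C$ (since $p^2[x]$ is linearly equivalent to $p\cdot p[x]$ and both are hyperplane sections — cf. Lemma \ref{hyperplanes}); evaluating at $P$ and using $G_K$-invariance of $P$'s Galois orbit shows the class of $N_{F/K}(\tilde\ell_1(P))/u(P)^{p^2}$ in $K^\times/K^{\times p}$ is independent of $P$. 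Call this class $c$. Finally, multiplying $\delta$ by an element of $K^\times$ scales $N_{F/K}(\delta)$ by a $p$-th power, and multiplying by $\partial_1(\gamma) = \gamma^p$ with $\gamma \in F^\times$ scales it by $N_{F/K}(\gamma)^p$; both are trivial mod $K^{\times p}$, so the condition $N_{F/K}(\delta) \equiv c \pmod{K^{\times p}}$ holds for every representative.

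For the second condition, apply $\partial_2$ to the relation $\delta = \tilde\ell_1(P)\alpha^p/u(P)$ and to the definition of $\partial$: since $\partial = \partial_1\times\partial_2$ on $\bF^\times$ and $\partial_1$ is the $p$-th power, the second component $\eps$ of $\ell(P)\partial(\alpha)$ is $\tilde\ell_2(P)\,\partial_2(\alpha)/u(P)$, where $\tilde\ell_2$ is the $H_2$-component of $\tilde\ell$. Hence $\partial_2(\delta) = \partial_2(\tilde\ell_1(P))\,\partial_2(\alpha)^p/u(P)^p$, while $\eps^p = \tilde\ell_2(P)^p\,\partial_2(\alpha)^p/u(P)^p$, so $\partial_2(\delta)/\eps^p = \partial_2(\tilde\ell_1(P))/\tilde\ell_2(P)^p \in H_2^\times$. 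I would set $\beta := \partial_2(\tilde\ell_1(P))/\tilde\ell_2(P)^p$; independence of $P$ (up to the implicit normalization, which does not matter since we only claim $\partial_2(\delta)=\beta\eps^p$ with $\eps$ varying accordingly) follows exactly as in Lemma \ref{tildeH}, using the relation $\ell\circ\pi = \Delta\,\partial h/(u\circ\pi)$ on a geometric covering $D$ to compare the values of the linear forms at two points. Finally, passing from $\delta$ to $\delta' = a\,\gamma^p\delta$ with $a \in K^\times$, $\gamma\in F^\times$ replaces $\eps$ by $\eps' = a\,\partial_2(\gamma)\eps$ (the $K^\times$-factor acts diagonally and the $\partial F^\times$-factor acts through $\partial_2$ on the second component), and one checks $\partial_2(\delta')/\eps'^p = \partial_2(\delta)/\eps^p = \beta$, so the condition is representative-independent.

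The main obstacle I expect is the two independence-of-$P$ claims, especially proving that $N_{F/K}(\tilde\ell_1(P))/u(P)^{p^2}$ has a well-defined class in $K^\times/K^{\times p}$: this requires knowing that $\prod_{x\in X}\tilde\ell_{1,x}$ and $u^{p}$ cut out linearly equivalent divisors on $C$ up to a $p$-th power, which is where Lemma \ref{hyperplanes} and the "same degree and same sum" fact on the elliptic curve $(C,x_0)$ do the work. The rest is bookkeeping with the splitting $H \simeq F\times H_2$ and $\partial = \partial_1\times\partial_2$, and tracking how the ambiguity group $K^\times\partial F^\times$ acts on each component.
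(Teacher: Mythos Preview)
Your approach is correct and reaches the same conclusion as the paper, but it is more roundabout than necessary. The paper defines $c$ and $\beta$ directly as the constants appearing in identities of \emph{forms} in the coordinate ring: since the divisor cut out by $\tilde\ell_1$ is $p\x$ and that cut out by $\tilde\ell_2$ is $\partial_2\x$, the forms $\partial_2(\tilde\ell_1)$ and $\tilde\ell_2^p$ cut out the same divisor on $C\otimes_K H_2$, so there is a relation $\partial_2(\tilde\ell_1) = \beta\,\tilde\ell_2^p$ holding identically; likewise $N_{F/K}(\tilde\ell_1)$ cuts out $p\sum_{x\in X}[x]$, and since $\sum_{x\in X}[x]$ is a hyperplane section of degree $p$ it is cut out by some $K$-form $g$ of degree $p$, giving $N_{F/K}(\tilde\ell_1) = c\,g^p$ identically. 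Once these identities are in hand, the conditions on any $(\delta,\eps)\in\tilde{\mathcal H}_K$ follow immediately by evaluating both sides at any admissible $P$. In your argument you evaluate first and then have to argue that the resulting constants are independent of $P$; this independence is automatic from the form identities, so the ``main obstacle'' you anticipate disappears. In particular, your $\beta = \partial_2(\tilde\ell_1(P))/\tilde\ell_2(P)^p$ is \emph{exactly} (not merely up to $p$-th powers) independent of $P$, since it is just the evaluation of the identity $\partial_2(\tilde\ell_1)=\beta\tilde\ell_2^p$.
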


\begin{Proof}
To prove the existence of $\beta \in H_2^\times$ we argue as follows. The divisor cut out by $\tilde{\ell}_1$ is $p\x$, while that cut out by $\tilde{\ell}_2$ is $\partial_2\x$. So in the coordinate ring of $C \otimes_K H_2$ there is a relation $\partial_2(\tilde{\ell}_1) = \beta\tilde{\ell}_2^p$, which determines $\beta \in H_2^\times$. For the existence of $c \in k^\times$ note that the divisor on $C$ defined by $N_{F/k}(\tilde{\ell}_1)$ is $p$ times the divisor $\sum_{x\in X}[x]$. The divisor $\sum_{x\in X}[x]$ is itself cut out by some form $g \in K[u_1,\dots,u_p]$ of degree $p$. Thus, there is some $c \in K^\times$, such that in the coordinate ring of $C$ \[ N_{F/K}\tilde{\ell}_1 = c\cdot g^p\,.\] It is clear from the definition of $\tilde{\mathcal{H}}_K$ and Lemma \ref{tildeH} that these conditions carry over to $\mathcal{H}_K$.
\end{Proof}

\subsection{The main diagram}
\label{MainDiagram}
Let $\mathcal{K}_K$ be the finite group 
\begin{align}
\label{KappaK}
\mathcal{K}_K \simeq \frac{\HH^0\bigl(K,(\partial_2(\mu_p\bF))\bigr)}{\partial_2\left(\HH^0\left(K,\frac{\mu_p(\bF)}{\mu_p}\right)\right)} \subset \frac{\HH^0(K,\mu_p(\bar{H}_2))}{\partial_2\left(\HH^0\left(K,\frac{\mu_p(\bF)}{\mu_p}\right)\right)}\,.
\end{align}
Taking Galois cohomology of diagram (\ref{digram}) we have the following.
\begin{Proposition}[Main Diagram]
The following diagram is exact and commutative.
\[\xymatrix{ 	&& 1 \ar[d] & 1\ar[d]&\\
		1\ar[r]&\mathcal{K}_K \ar@{=}[d]\ar[r]& \mathcal{H}_K^0 \ar[r]^{\pr_1}\ar[d]& F^\times/K^\times F^{\times p}\ar[d]\ar[r]^{\partial_{2*}} & \HH^1(K,\partial_2(\mu_p(\bF))) \ar@{=}[d] \\
		1\ar[r]&\mathcal{K}_K \ar[r]&\HH^1(K,E[p])\ar[r]\ar[d]^\Upsilon& \HH^1\bigl(K,\frac{\mu_p(\bF)}{\mu_p}\bigr)\ar[d]\ar[r]^{\partial_{2*}}& \HH^1(K,\partial_2(\mu_p(\bF)))\\
		&& \Br(K)[p] \ar@{=}[r] & \Br(K)[p] & \\
}\] 
\end{Proposition}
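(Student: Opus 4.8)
The plan is to obtain the Main Diagram by feeding diagram (\ref{diagram1}) into Galois cohomology: Row~2 will be a truncation of the long exact cohomology sequence of the bottom row of (\ref{diagram1}), and Row~1 will be its restriction along the kernels of the two maps into $\Br(K)[p]$. First I would assemble the identifications of the groups involved. Kummer theory for \'etale algebras gives $\HH^1(K,\mu_p(\bF))\cong F^\times/F^{\times p}$. Lemma~\ref{dfp} together with Corollary~\ref{desc2} identifies $\mathcal{H}_K^0=(\partial\bF^\times)^{G_K}/K^\times\partial F^\times$ with $C^\perp$, which by Lemma~\ref{cup} and the exact sequence (\ref{DefUps}) is precisely $\ker\Upsilon\subseteq\HH^1(K,E[p])$; this gives the left inner column of the Main Diagram and the vertical arrow out of $\mathcal{H}_K^0$. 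Feeding the middle column $1\to\mu_p\to\mu_p(\bF)\to\mu_p(\bF)/\mu_p\to1$ of (\ref{diagram1}) into cohomology, its connecting map $\delta''\colon\HH^1(K,\mu_p(\bF)/\mu_p)\to\HH^2(K,\mu_p)=\Br(K)[p]$ has kernel the image of $\HH^1(K,\mu_p(\bF))=F^\times/F^{\times p}$, namely $F^\times/K^\times F^{\times p}$; this gives the right inner column and the injective arrow $F^\times/K^\times F^{\times p}\hookrightarrow\HH^1(K,\mu_p(\bF)/\mu_p)$.

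Next I would write down Row~2 and the bottom square. Cohomology of the bottom row $0\to E[p]\to\mu_p(\bF)/\mu_p\xrightarrow{\partial_2}\partial_2(\mu_p(\bF))\to0$ of (\ref{diagram1}) yields
\[\HH^0\bigl(K,\tfrac{\mu_p(\bF)}{\mu_p}\bigr)\xrightarrow{\partial_2}\HH^0\bigl(K,\partial_2(\mu_p(\bF))\bigr)\xrightarrow{\delta}\HH^1(K,E[p])\to\HH^1\bigl(K,\tfrac{\mu_p(\bF)}{\mu_p}\bigr)\xrightarrow{\partial_{2*}}\HH^1\bigl(K,\partial_2(\mu_p(\bF))\bigr),\]
and since $\mathcal{K}_K$ is, by its definition (\ref{KappaK}), the cokernel of the first map, $\delta$ induces an injection $\mathcal{K}_K\hookrightarrow\HH^1(K,E[p])$ with image $\ker\bigl(\HH^1(K,E[p])\to\HH^1(K,\mu_p(\bF)/\mu_p)\bigr)$; this is the exact Row~2. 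Commutativity of the bottom square, $\Upsilon=\delta''\circ\bigl(\HH^1(K,E[p])\to\HH^1(K,\mu_p(\bF)/\mu_p)\bigr)$, is naturality of connecting homomorphisms for the morphism of short exact sequences from column~2 of (\ref{diagram1}) (which is (\ref{Affseq})) to its middle column---precisely the content of the lemma that (\ref{diagram1}) commutes. That same lemma, together with the compatibility of the Kummer isomorphism with $\mu_p(\bF)\to\mu_p(\bF)/\mu_p$, identifies $\pr_1$ with the restriction of $\HH^1(K,E[p])\to\HH^1(K,\mu_p(\bF)/\mu_p)$ to $\ker\Upsilon=\mathcal{H}_K^0$; the remaining squares are restrictions of Row~2 squares or commute trivially.

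It then remains to deduce exactness of Row~1, which is a chase inside Row~2 using only the bottom square. Take $\mathcal{K}_K\to\mathcal{H}_K^0$ to be the corestriction of $\mathcal{K}_K\hookrightarrow\HH^1(K,E[p])$---legitimate since $\im\mathcal{K}_K\subseteq\ker\bigl(\HH^1(K,E[p])\to\HH^1(K,\mu_p(\bF)/\mu_p)\bigr)\subseteq\ker\Upsilon=\mathcal{H}_K^0$, the last inclusion by the bottom square---and it is injective because $\mathcal{K}_K\hookrightarrow\HH^1(K,E[p])$ is. For exactness at $\mathcal{H}_K^0$: $\ker\pr_1=\ker\Upsilon\cap\ker\bigl(\HH^1(K,E[p])\to\HH^1(K,\mu_p(\bF)/\mu_p)\bigr)=\ker\Upsilon\cap\im\mathcal{K}_K=\im\mathcal{K}_K$. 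For exactness at $F^\times/K^\times F^{\times p}$: by Row~2 exactness $\ker\partial_{2*}=\ker\delta''\cap\im\bigl(\HH^1(K,E[p])\to\HH^1(K,\mu_p(\bF)/\mu_p)\bigr)$, and if $y$ lies in this intersection, written as the image of $x\in\HH^1(K,E[p])$, then $\Upsilon(x)=\delta''(y)=0$ by the bottom square, so $x\in\ker\Upsilon$ and $y=\pr_1(x)\in\im\pr_1$; conversely $\im\pr_1\subseteq\ker\partial_{2*}$ since consecutive maps in Row~2 compose to zero.

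The main obstacle is bookkeeping rather than any single hard step: each group in the diagram carries two incarnations---an explicit multiplicative quotient ($\mathcal{H}_K^0$, $F^\times/K^\times F^{\times p}$, $\mathcal{K}_K$) and a Galois-cohomological one ($\ker\Upsilon$, $\ker\delta''$, $\coker\partial_2$)---and one must verify that $\pr_1$, $\partial_{2*}$ and $\Upsilon$ respect these identifications so that every square really commutes and every row and column is exact. The one genuinely substantive compatibility---that $\Upsilon$ is $\delta''$ precomposed with the down-arrow, equivalently that $\pr_1$ is the cohomological map restricted to $\ker\Upsilon$---is exactly where the already-established commutativity of diagram (\ref{diagram1}) (and the self-duality of $E[p]$ used to build (\ref{Affseq})) gets used.
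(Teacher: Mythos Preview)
Your proof is correct and follows essentially the same route as the paper's: both obtain Row~2 from the long exact cohomology sequence of the bottom row of diagram~(\ref{diagram1}), identify the groups via Hilbert~90, Lemma~\ref{dfp} and Corollary~\ref{desc2}, and deduce the rest by a formal diagram chase. The only cosmetic difference is that the paper phrases Row~1 as ``cohomology of the \emph{upper} row of~(\ref{diagram1}) modulo the image of $\HH^1(K,\mu_p)$'' and then chases to identify the kernel with $\mathcal{K}_K$, whereas you phrase Row~1 as the restriction of Row~2 to $\ker\Upsilon$ and $\ker\delta''$ and chase exactness directly; the two descriptions coincide because the exact columns of~(\ref{diagram1}) identify the quotient-by-$\HH^1(K,\mu_p)$ groups with those kernels.
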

\begin{Proof} The lower of the two rows is obtained directly from the long exact sequence of the bottom row of (\ref{digram}). Up to the identifications described below, the upper row is obtained by taking Galois cohomology of the upper row of (\ref{digram}) and then modding out by the images of $\HH^1(K,\mu_p)$. A completely formal diagram chase shows that the kernels of these two rows must be isomorphic (so that $\mathcal{K}_K$ is the kernel in the top row as well).

The identifications are the obvious ones following from Hilbert's Theorem 90, Lemma \ref{dfp} and Corollary \ref{desc2}. One can check that the map labelled $\pr_1$ is in fact induced by projection of $H \simeq F \times H_2$ onto its first factor. This is obvious from the proof of \ref{desc2}.
\end{Proof}

\begin{Remark}
For $p = 3$, one can replace $\partial_{2_*}:F^\times/K^\times F^{\times 3} \to \HH^1(K,\partial_2(\mu_p(\bF)))$ with $\partial_{2}:F^\times/K^\times F^{\times 3} \to H_2^\times /H_2^{\times p}$ without affecting exactness (see \cite[Section II.6]{CreutzThesis}). 
\end{Remark}

The relevant data for descent on $C$ is contained in a translate of the top row. For any $(\delta,\eps) \in F^\times\times H_2^\times \simeq H^\times$ representing a class in $\mathcal{H}_K$ we have a commutative diagram: 
\begin{align}
\xymatrix{ 	
&& \Cov_0^{(p)}(C/K)\ar[d]^{\tilde{\Phi}}\\
1 \ar[r] &\mathcal{K}_K\cdot(\delta,\eps)\ar[r]& \mathcal{H}_K \ar[rr]^{\pr_1} && \left\{\, \delta \in F^\times/K^\times F^{\times p}\,:\, \partial_2(\delta) \in \beta H_2^{\times p}\,\right\} \\
1\ar[r]&\mathcal{K}_K\ar[u]^{\cdot (\delta,\eps)}\ar[r]&\mathcal{H}_K^0 \ar[u]^{\cdot (\delta,\eps)} \ar[rr]^{\pr_1}&& \left\{\, \delta \in F^\times/K^\times F^{\times p}\,:\, \partial_2(\delta) \in H_2^{\times p}\,\right\}\,. \ar[u]^{\cdot\delta} }
\end{align}
Lemma \ref{candbeta} shows that the sets on the right contain $\ker(\partial_{2*})$ and its translate by $\delta$. So the lower row is an exact sequence of groups and the upper row is an exact sequence of pointed sets. 

\section{Inverse of the descent map}
\label{InverseOfTheDescentMap}
The main result of this section is the explicit construction of an inverse to the descent map. Recall that $\tilde{\mathcal{H}}_K \subset H^\times$ is the subset of elements which represent classes in the image of $\tilde{\Phi}:\Cov_0^{(p)}(C/K) \to H^\times/K^\times\partial F^\times$. 

\begin{Theorem}
\label{ConstructionWorks} 
Given $\Delta \in \tilde{\mathcal{H}}_K$, we can explicitly compute a set of $p^2(p^2-3)/2$ linearly independent quadrics over $K$ which define a genus one normal curve $D_\Delta \subset \PP^{p^2-1}$ of degree $p^2$ and a set of homogeneous polynomials defining a map $\pi_\Delta:D_\Delta \to C$ making $D_\Delta$ into a $p$-covering of $C$. Moreover, the image of the class of $(D_\Delta,\pi_\Delta)$ in $\Cov_0^{(p)}(C/K)$ under the descent map is equal to the class of $\Delta$ in $\mathcal{H}_K$.\\
\end{Theorem}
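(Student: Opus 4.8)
The plan is to invert the relation $\tilde\ell\circ\pi=\Delta\,\partial h$ which, by Theorem \ref{PhionCov}, characterises the image of the descent map: from $\Delta$ we must reconstruct a model $D_\Delta\subset\PP^{p^2-1}$, a covering map $\pi_\Delta:D_\Delta\to C$, and a linear form $h$ with coefficients in $\bF$ realising that relation with the given $\Delta$. The starting point is the geometric picture. Over $\bK$ the set $\Cov^{(p)}(\bC/\bK)=\Cov_0^{(p)}(\bC/\bK)$ is a single point: it is a principal homogeneous space under $\HH^1(\bK,E[p])=0$, and the obstruction is automatically trivial since $\Br(\bK)=0$. Fix a representative $(D_0,\pi_0)$, a model of $D_0$ in $\PP^{p^2-1}$, and a linear form $h_0$ with coefficients in $\bF$ defining the pull-back of the generic flex, so that $\tilde\ell\circ\pi_0=\Delta_0\,\partial h_0$ for a specific $\Delta_0\in\tilde{\mathcal H}_{\bK}=\ell(P)\,\partial\bF^\times$. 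First I would show that the $p^2$ linear forms $h_x$ ($x\in X$) cutting out the hyperplane sections $\pi_0^*[x]$ are linearly independent, so that they may be taken as the homogeneous coordinates $z_x$ on $\PP^{p^2-1}=\PP(\bF)$; concretely, on $\bar D_0\cong E$ the divisor $\pi_0^*[x]$ equals $[p]^*[\psi_C(x)]$, which is invariant under translation by $E[p]$, so $h_x$ spans an eigenline for the (projective) translation action of $E[p]$ on $\HH^0(\CO(1))$, and since $E[p]$ is a maximal isotropic subgroup of the kernel of the polarisation these eigenlines are one-dimensional and distinct as $\psi_C(x)$ ranges over $E[p]$ --- a Riemann--Roch/theta-group computation. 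In these coordinates $h_0$ is the tautological form $(z_x)_x$, and reading $\tilde\ell\circ\pi_0=\Delta_0\,\partial h_0$ componentwise records that $(\tilde\ell_1)_x\circ\pi_0=(\delta_0)_x\,z_x^{\,p}$ and that the $H_2$-component of $\tilde\ell\circ\pi_0$ at $y=(p-2)[x]+[x+P]+[x-P]$ is $(\eps_0)_y\,z_x^{\,p-2}z_{x+P}z_{x-P}$; this exhibits $\Delta_0=(\delta_0,\eps_0)$ as the geometric descent image and pins down the $z_x$ up to the scaling ambiguity $z_x\mapsto c_xz_x$, $c\in\bF^\times$.

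Now, given $\Delta\in\tilde{\mathcal H}_K$: by Lemma \ref{tildeH} both $\Delta$ and $\Delta_0$ lie in the coset $\ell(P)\,\partial\bF^\times$, so $\Delta=\Delta_0\,\partial(\mu)$ for some $\mu\in\bF^\times\cong\prod_{x\in X}\bK^\times$. I would then define $D_\Delta$ and $\pi_\Delta$ by applying the diagonal change of coordinates determined by $\mu$ (namely $z_x\mapsto\mu_xz_x$) to the $\bK$-equations of $D_0$ and to the degree-$p$ forms defining $\pi_0$. Two verifications are then required. The first --- and this is the crux --- is that $D_\Delta$ and $\pi_\Delta$ are defined over $K$ and not merely over $\bK$. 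Here $G_K$ acts on $\PP(\bF)$ through the $G_K$-set $X$, so that $z_x^\sigma=z_{x^\sigma}$; a direct bookkeeping then shows the transformed equations are $G_K$-fixed precisely because $\Delta=\Delta_0\,\partial\mu$ is $G_K$-fixed while $\Delta_0$ in general is not. Equivalently, the cochain $\sigma\mapsto\mu^\sigma/\mu$ lands in $\Aff(X,\mu_p)\subset\bF^\times$, its image in $\HH^1(K,E[p])$ under Lemma \ref{dfp} and Corollary \ref{desc2} is $\tilde\Phi_0^{-1}$ of the class of $\Delta/\Delta_0$, and twisting $(D_0,\pi_0)$ by this class --- carried out concretely by the coordinate scaling --- produces a covering admitting a $K$-rational model. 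This step is where essentially all of the machinery of Section \ref{AffineStructure} (the sequence \eqref{Affseq}, Lemma \ref{dfp}, Corollary \ref{desc2}, diagram \eqref{digram}) is brought to bear.

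The second verification, that the descent map sends the class of $(D_\Delta,\pi_\Delta)$ to $[\Delta]$, is then essentially formal given Theorem \ref{PhionCov}. Since $\pi_\Delta=\pi_0\circ\operatorname{diag}(\mu_x)$ and the generic-flex form for $D_\Delta$ is again the tautological $(z_x)_x$, substituting $z\mapsto\mu z$ into $(\tilde\ell_1)_x\circ\pi_0=(\delta_0)_x\,z_x^{\,p}$ and into the analogous $H_2$-identity yields exactly $\tilde\ell\circ\pi_\Delta=\Delta\,\partial h$ in the coordinate ring of $D_\Delta$, which by the defining property of $\tilde\Phi$ means $\tilde\Phi((D_\Delta,\pi_\Delta))=[\Delta]$. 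The count of quadrics is then automatic: $D_\Delta\otimes_K\bK\cong D_0$ is a genus one normal curve of degree $p^2\ge 9$, whose homogeneous ideal is generated by a $\bK$-vector space of quadrics of dimension $p^2(p^2-3)/2$, and since $D_\Delta$ is defined over $K$ the degree-two part of its ideal is a $K$-form of this space of the same dimension. Finally, for the word ``explicitly'' one needs in hand an explicit model of the essentially unique geometric covering $(D_0,\pi_0)$ together with an $h_0$; this is standard technology --- via the Heisenberg-group description of genus one normal curves in \cite{CFOSS}, or for small $p$ by direct computation --- and once it is fixed the remaining construction is linear algebra over $F$ and $H_2$.

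I expect the main obstacle to be the Galois-descent step of the second paragraph: showing that the diagonal coordinate change really lands the equations over $K$ requires tracking simultaneously the $K$-structure on $\PP(\bF)$ coming from the $G_K$-set $X$, the twisting action on $\Cov^{(p)}(C/\bK)$, and the role played by the Galois-invariance of $\Delta$, and it amounts to re-deriving Proposition \ref{Phiphs} from the construction side rather than merely citing it.
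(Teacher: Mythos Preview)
Your outline is sound and would ultimately succeed, but it is organised in the reverse order from the paper, and this reversal is what creates the obstacle you flag. The paper's key observation is that the scheme $D_\Delta$ can be written down \emph{directly over $K$}: one starts from the single equation $\tilde\ell = a\,\Delta\,\partial(z)$ with $a\in K^\times$ and $z\in F$ unknown, expands both sides in a $K$-basis of $H$, and eliminates. Since $\tilde\ell$ has coefficients in $H$, $\Delta\in H^\times$, and $\partial$ is a $K$-polynomial map $F\to H$, every equation produced has coefficients in $K$. Thus $K$-rationality of $(D_\Delta,\pi_\Delta)$ is automatic and no Galois-descent argument is needed. What then remains is to show that $D_\Delta$ really is a genus one normal curve and $\pi_\Delta$ a $p$-covering; the paper does this by the twisting lemma you also use (here Lemma \ref{twistDelta}: multiplication by $\alpha\in\bF^\times$ on $\PP(\bF)$ gives $D_{\partial(\alpha)\Delta}\cong D_\Delta$), which reduces to checking a single geometric case $\Delta_0\in\tilde{\mathcal H}_{\bK}$ (Lemma \ref{EmbeddC}). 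There the embedding via the eigenfunctions $G_x$ you describe shows $D_{\Delta_0}$ is exactly the image of the multiplication-by-$p$ covering, and a dimension count against the $p^2(p^2-3)/2$ quadrics already produced (Lemma \ref{ComputeQuadrics}) forces equality.

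So the two proofs share the same ingredients (the geometric model, the diagonal twisting, the quadric count), but the paper trades your Galois-descent step for an a priori $K$-rational construction. This also makes the word ``explicitly'' cleaner: one never needs to compute $\Delta_0$ or solve $\Delta=\Delta_0\,\partial\mu$ for $\mu\in\bF^\times$; the equations of $D_\Delta$ are read off from $\Delta$ and $\tilde\ell$ by linear algebra over $K$. Your route would recover these same equations after the coordinate change $z_x\mapsto\mu_x z_x$ is unwound in a $K$-basis, but only after the descent verification you rightly identify as delicate.
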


\begin{Remark}
Our proof is strongly influenced by \cite[II, Section 3]{CFOSS}. In that paper, the problem of obtaining models of $n$-coverings of elliptic curves with trivial obstruction as genus one normal curves of degree $n$ is considered. Their first step (in the `Segre embedding method') is to embed the curve as a genus one normal curve of degree $n^2$. They then show that, after projection to a suitable hyperplane, the embedding factors through the Segre embedding  $\PP^{n-1}\times (\PP^{n-1})^\vee \to \PP^{n^2-1}$. Making this factorization explicit requires an explicit trivialization of the obstruction algebra associated to the $n$-covering.

In our situation, things are actually somewhat simpler. We start with a $p$-covering of $C$. The analog of the first step of the Segre embedding method above yields a model as genus one normal curve of degree $p^2$. This already gives us what we are after. It is a feature of second $p$-descents that no trivialization of the obstruction algebra is necessary.
\end{Remark}

Let $\Delta \in H^\times$. Note that we are not yet assuming $\Delta \in \tilde{\mathcal{H}}_K$. We can associate to $\Delta$ a $C$-scheme $D_\Delta$ defined over $K$ as follows. Fix a basis $\{e_1,...,e_{p^2}\}$ for $F = \Map_K(X,\bK)$ over $K$. We can then write an arbitrary element $z \in F$ as $z = \sum_i z_ie_i$. The choice of basis gives an identification of $(F\setminus\{0\})/K^\times$ with the $K$-points of $\PP^{p^2-1}$, $0 \ne z = \sum z_ie_i$ corresponding to the point $(z_1:\dots:z_{p^2}) \in \PP^{p^2-1}$.

We start with the equation \[ \tilde{\ell} = a \Delta \partial(z) \] where $a \in K^\times$ and $z \in F\setminus\{0\}$ are treated as unknown. The map $\partial:F\to H$ can be written as a homogeneous polynomial of degree $p$ in the $z_i$ and so our equation corresponds to an equation \[ \tilde{\ell}(u_1,...,u_p) = a\Delta\partial(z_1,...,z_p),\] where $\tilde{\ell}$ and $\partial$ are homogeneous polynomials of degrees $1$ and $p$, respectively, both with coefficients in $H$. Writing this out in a basis for $H$ over $K$ (extending the basis chosen above) and equating coefficients on the basis vectors gives a system of $m := [H:K]$ equations, with coefficients in $K$, of the form:
\begin{align}
\label{eqtype}
\text{ linear in $u_1,\dots,u_p$ } = \text{ degree $p$ in $z_1,\dots,z_{p^2}$ }\,.
\end{align}

We claim that the matrix defined by the coefficients of the $m$ linear forms in the left-hand side of (\ref{eqtype}) has full rank (i.e. rank equal to $p$). For this one uses a geometric argument; over $\bK$, $\ell$ splits as a tuple of linear forms defining the hyperplanes in $Y$ and these span the space of all linear forms in $\bK[u_1,\dots,u_p]$.

Eliminating $u_1,\dots,u_p$ gives a system of equations:
\[\left\{
    \begin{array}{ccc}
      u_i=a\cdot\pi_i(z_1,\dots,z_{p^2})\,, &\text{for $i=1,\dots,p$ }\\
      0=a\cdot P_i(z_1,\dots,z_{p^2})\,, &\text{for $i=1,\dots,m-p$ } \\
    \end{array}
  \right\}
\]
where $\pi_i$ and $P_i$ are homogeneous of degree $p$ with coefficients in $K$. Let \[\{ Q_j(u_1,\dots,u_p) : j = 1,\dots,N \}\] be the homogeneous polynomials defining the model for $C$ as a genus one normal curve of degree $p$ in $\PP^{p-1}$. Recall that in the case $p=3$, $N=1$ and $Q_1$ is of degree $3$. For larger $p$, $N = \frac{p(p-3)}{2}$ and the $Q_j$ are of degree $2$. We define $D_\Delta \subset \PP^{p^2-1}(z_1:\dots:z_{p^2})$ as the (reduced) $K$-subscheme defined by the vanishing of the polynomials in the set
\[ \{ P_i :\, 0<i\le m-p \} \bigcup \{ Q_j(\pi_1,\dots,\pi_p) :\, 0 < j\le N \}\,. \] The second set is included to ensure that the rational map \[\PP^{p^2-1}(z_1:\dots:z_{p^2}) \to \PP^{p-1}(u_1:\dots:u_p)\] defined by $u_i = \pi_i(z_1,\dots,z_{p^2})$ restricts to a morphism $\pi_\Delta:D_\Delta \to C$. Note also that if $\Delta \equiv \Delta' \Mod K^\times$, then $(D_\Delta,\pi_\Delta) = (D_{\Delta'},\pi_{\Delta'})$. In other words, $(D_\Delta,\pi_\Delta)$ only depends on the class of $\Delta$ in $H^\times/K^\times$.\\

\begin{Remark}
In the case $p=3$, $m = [H:K] = 21$, so $D_\Delta \subset \PP^8$ is defined by $18$ cubics and one form of degree $9$. For $p > 3$, we have $m = p^2(p^2+1)/2$, so the model is given by $\frac{p^2(p^2+1)}{2} - p$ forms of degree $p$ and $N = \frac{p(p-3)}{2}$ forms of degree $2p$. We will see below how to obtain a set of $\frac{p^2(p^2-3)}{2}$ quadrics generating the homogeneous ideal.
\end{Remark}

One obvious, but important, property of the construction is given in the following lemma. This says that if $(D_\Delta,\pi_\Delta)$ is a $p$-covering of $C$, then its image under the descent map is necessarily given by $\Delta$.
\begin{Lemma}
\label{lifty}
If there is some $R \in C(K)$ such that $\ell(R) \in \Delta\cdot K^\times\partial F^\times$, then there exists some $Q \in D_\Delta(K)$ such that $\pi_\Delta(Q) = R$.
\end{Lemma}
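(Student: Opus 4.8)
The plan is to unwind the explicit construction of $(D_\Delta,\pi_\Delta)$ and produce the point $Q$ directly from the data witnessing $\ell(R)\in\Delta\cdot K^\times\partial F^\times$. First I would translate the hypothesis into an equation of the exact shape used to build $D_\Delta$. By definition of the coset there are $c\in K^\times$ and $\beta\in F^\times$ with $\ell(R)=c\,\Delta\,\partial(\beta)$ (the hypothesis implicitly presupposes $\ell(R)\in H^\times$, so $R$ avoids the hyperplanes in $Y$ and the zeros of $u$). Fixing homogeneous coordinates $(R_1:\dots:R_p)\in\PP^{p-1}(K)$ for $R$ and recalling $\ell=\tilde{\ell}/u$ with $u(R_1,\dots,R_p)\in K^\times$, this becomes $\tilde{\ell}(R_1,\dots,R_p)=a\,\Delta\,\partial(\beta)$ with $a:=c\cdot u(R_1,\dots,R_p)\in K^\times$. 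Writing $\beta=\sum_i\beta_ie_i$ in the fixed $K$-basis of $F$, the $\beta_i$ are not all zero, so $Q:=(\beta_1:\dots:\beta_{p^2})$ is a well-defined point of $\PP^{p^2-1}(K)$, and $\partial(\beta)$ equals the value at $z=\beta$ of the degree-$p$ polynomial map $\partial(z_1,\dots,z_{p^2})$ appearing in the construction.

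Next I would verify $Q\in D_\Delta(K)$ and $\pi_\Delta(Q)=R$. The polynomials $\pi_i$ $(1\le i\le p)$ and $P_i$ $(1\le i\le m-p)$ were obtained from the $m=[H:K]$ $K$-linear equations equivalent to the relation $\tilde{\ell}(u)=a\,\Delta\,\partial(z)$ by solving for $u_1,\dots,u_p$, using that the $m\times p$ matrix of linear parts has rank $p$; concretely this is left-multiplication of the system by an invertible $m\times m$ matrix over $K$. Since $a\ne0$, the identity $\tilde{\ell}(R_1,\dots,R_p)=a\,\Delta\,\partial(\beta)$ is then equivalent to the system $R_i=a\,\pi_i(\beta)$ $(1\le i\le p)$ together with $P_i(\beta)=0$ $(1\le i\le m-p)$. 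In particular $P_i(\beta)=0$ for all $i$ and $\pi_i(\beta)=R_i/a$; and for each defining form $Q_j$ of $C$, using $Q_j(R_1,\dots,R_p)=0$ we get $Q_j(\pi_1(\beta),\dots,\pi_p(\beta))=a^{-\deg Q_j}Q_j(R_1,\dots,R_p)=0$. Hence $Q$ satisfies all the equations cutting out $D_\Delta$, so $Q\in D_\Delta(K)$, and $\pi_\Delta(Q)=(\pi_1(\beta):\dots:\pi_p(\beta))=(R_1:\dots:R_p)=R$, the map being defined at $Q$ since the $R_i$ are not all zero.

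The only step needing genuine care — the main obstacle — is the reversibility of the elimination: that a bona fide solution $(u,a,z)=(R_1,\dots,R_p;\,a;\,\beta)$ with $a\ne0$ of the over-determined system $\tilde{\ell}(u)=a\,\Delta\,\partial(z)$ yields a point of $D_\Delta$, even though $D_\Delta$ is defined via the \emph{eliminated} equations $\{P_i=0\}\cup\{Q_j(\pi_\bullet)=0\}$ rather than the original ones. This is pure linear algebra over $K$ (the only place $a\ne0$ is used is to pass from $a\,P_i(z)=0$ to $P_i(z)=0$), but it should be stated cleanly so that the passage between the two systems is visibly an equivalence on $K$-points. Everything else is coordinate bookkeeping.
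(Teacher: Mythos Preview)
Your proposal is correct and follows essentially the same approach as the paper's proof: both unwind the construction of $D_\Delta$ by plugging the witness $(a,\beta)$ for $\ell(R)\in\Delta\cdot K^\times\partial F^\times$ into the defining equation $\tilde{\ell}=a\Delta\partial(z)$ and reading off that the eliminated system is satisfied. Your version is somewhat more explicit than the paper's in justifying the reversibility of the elimination and in checking the equations $Q_j(\pi_\bullet)=0$ separately, but the argument is the same.
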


\begin{Proof}
Suppose $R \in C(K)$ is such that $\ell(R) = a\Delta\partial(Q)$ with $a \in K^\times$ and $Q \in F^\times$. Choose homogeneous coordinates $(R_1:\dots:R_p)$ for $R$ and write $Q = \sum_ie_iQ_i$ with $Q_i \in K$. Recall that $\ell(R) = \frac{\tilde{\ell}(R_1,\dots,R_p)}{u(R_1,\dots,R_p)}$, where $u$ is a linear form not vanishing at $(R_1,\dots,R_p)$. Then $\tilde{\ell}(R_1,\dots,R_p) = a u(R_1,\dots,R_p) \Delta \partial(Q_1,\dots,Q_{p^2})$. Eliminating as in the construction we see that
\begin{align*}
R_i &= au(R_1,\dots,R_p)\cdot \pi_i(Q_1,\dots,Q_{p^2})\,, \text{ for $i = 1,\dots,p$,}\\
0 &= au(R_1,\dots,R_p)\cdot P_j(Q_1,\dots,Q_{p^2})\,, \text{ for $j = 1,\dots,m-p$}\,.
\end{align*} Note that $au(R_1,\dots,R_p) \in K^\times$. Since $R \in C$, the equations above  say that the point $(Q_1:\dots:Q_{p^2})$ lies in $D(K)$ and is mapped via $\pi_\Delta$ to $R$.
\end{Proof} 

The association $H^\times \ni \Delta \mapsto (D_\Delta,\pi_\Delta)$ depends on the choice of basis for $F$ over $K$. We assume all $(D_\Delta,\pi_\Delta)$ are constructed using the same basis (and so live in the same copy of $\PP^{p^2-1}$). A different choice of basis leads to objects which differ only by a linear automorphism of the ambient space. It is to be understood that this automorphism is applied to each $(D_\Delta,\pi_\Delta)$ if we change the basis.

When working geometrically, it will be convenient to use the basis of $\bF$ over $\bK$ given by the characteristic functions. These are the maps $e_x \in \bF = \Map(X,\bK)$ (indexed by $x \in X)$ taking the value $1$ at $x$ and the value $0$ at all $x' \ne x$. In terms of this basis, $0 \ne z \in \bF \setminus\{0\}$ corresponds to the point $z = (z_x) \in \PP^{p^2-1}$ with $z_x$-coordinate given by the value of $z$ at $x$. We can extend to a basis for $\bH$ over $\bK$ by taking the characteristic functions on $Y$ and identifying $x \in X$ with the hyperplane in $Y$ cutting out the divisor $p[x]$ on $C$. Then $\partial(z)$ splits as the tuple of polynomials, (indexed by $y \in Y$) \[ \partial(z) = \left( \prod_{x\in y} z_x \right)_{y \in Y}\,,\] where as usual the product is to be taken with appropriate multiplicities.

\begin{Lemma}
\label{ComputeQuadrics}
Given $\Delta \in H^\times$ we can explicitly compute a set of $p^2(p^2-3)/2$ linearly independent quadrics over $K$ which lie in the homogeneous ideal of $D_\Delta \subset \PP^{p^2-1}$.
\end{Lemma}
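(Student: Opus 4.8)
The plan is to work geometrically, over $\bK$, using the basis of $\bF$ over $\bK$ given by the characteristic functions $\{e_x\}_{x\in X}$, so that the homogeneous coordinate ring of $\PP^{p^2-1}$ is $\bK[(z_x)_{x\in X}]$ and $\partial(z)$ is the tuple $\left(\prod_{x\in y}z_x\right)_{y\in Y}$ recorded above. Because $D_\Delta$ is cut out by forms with coefficients in $K$, the $\bK$-span of any $G_K$-stable family of quadrics vanishing on $D_\Delta$ contains a $K$-rational basis; so it suffices to exhibit, in a Galois-equivariant way, $p^2(p^2-3)/2$ linearly independent quadrics over $\bK$ in the homogeneous ideal of $D_\Delta$. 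I would carry this out for $\Delta\in\tilde{\mathcal{H}}_K$, which is the only case needed for Theorem \ref{ConstructionWorks}; for such $\Delta$ the scheme $D_\Delta$ is a genus one normal curve of degree $p^2$ equipped with the covering map $\pi_\Delta:D_\Delta\to C$, and one may apply Riemann--Roch on $D_\Delta$. (For a completely general $\Delta\in H^\times$ one should instead read the recipe below as a purely formal one --- compute, weight by weight, the linear syzygies among the relevant monomials modulo the defining ideal --- and check afterwards that it still produces $p^2(p^2-3)/2$ independent forms.)

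First I would set up a weight decomposition. The deck group $E[p]$ of $\pi_\Delta$ acts on $D_\Delta$ and hence linearly on $\PP^{p^2-1}$; since translation by $T\in E[p]$ fixes each divisor $\pi_\Delta^*[x]$, the form $z_x$ is an eigenvector, and by Lemma \ref{WPLemma} (after rescaling the $z_x$, which does not change $D_\Delta$) its character is, up to the sign convention of the Weil pairing, $T\mapsto e_p(T,x-x_0)$ for a fixed base flex $x_0$. Pushing forward along the \'etale map $\pi_\Delta$ gives the isotypic decomposition $\HH^0(D_\Delta,\CO(n))=\HH^0(D_\Delta,n\pi_\Delta^*[x_0])=\bigoplus_{\chi}\HH^0\bigl(C,\CO(n[x_0])\otimes\mathcal{L}_\chi\bigr)$, where $(\pi_\Delta)_*\CO_{D_\Delta}=\bigoplus_\chi\mathcal{L}_\chi$ with each $\mathcal{L}_\chi\in\Pic^0(C)$. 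Each summand has dimension $n$ (a degree-$n$ class on a genus one curve), so $\HH^0(D_\Delta,\CO(2))$ has dimension $2p^2$ and is graded by $E[p]$ with every weight space of dimension exactly $2$; and the monomial $z_xz_{x'}$ has weight $(x-x_0)+(x'-x_0)\in E[p]$.

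Next comes the count and the explicit construction. Since $C$, hence $D_\Delta$, is projectively normal in its degree $\ge 3$ embedding, the map $\Sym^2\HH^0(\CO(1))\to\HH^0(\CO(2))$ is surjective and $E[p]$-equivariant, so for each weight $R$ the monomials of weight $R$ span the two-dimensional weight-$R$ subspace of $\HH^0(D_\Delta,\CO(2))$. For $p$ odd there are exactly $(p^2+1)/2$ such monomials (the unordered pairs $\{x,x'\}$, distinct or not, with $(x-x_0)+(x'-x_0)=R$), so the weight-$R$ part of the degree-two piece of the ideal of $D_\Delta$ has dimension $(p^2+1)/2-2=(p^2-3)/2$; summing over the $p^2$ weights $R$ gives $p^2(p^2-3)/2$ linearly independent quadrics in the ideal. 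To make these explicit, fix for each $R$ two monomials that form a basis of the weight-$R$ subspace --- for instance $z_{x_R}^2$ with $x_R$ the flex satisfying $2(x_R-x_0)=R$, together with $z_{x_0}z_{x_0+R}$ --- and express every remaining weight-$R$ monomial as a $\bK$-linear combination of these two; the two coefficients solve a $2\times 2$ linear system written down from the defining equations, using the relations $\tilde{\ell}_{p[x]}\circ\pi_\Delta=a\Delta_{p[x]}z_x^p$ and $u_i=a\pi_i(z)$ to bring all the relevant sections into a common coordinate and eliminating (equivalently, evaluate at two points of $D_\Delta$ over a splitting field). Finally assemble the resulting $\bK$-rational quadrics over all weights and take a $K$-basis of their Galois-stable span.

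The step I expect to be the main obstacle is precisely the last one: turning the cohomological statement ``the weight-$R$ monomials span a two-dimensional space, hence satisfy $(p^2-3)/2$ relations'' into closed expressions for those relations defined over $K$. Up to that point everything is a dimension count, but the relation coefficients are not forced by the grading alone; they must be extracted from the $p$-th power relations $z_x^p\sim\tilde{\ell}_{p[x]}(u)$ together with $u=a\pi(z)$, and organising this elimination so that it visibly commutes with the $G_K$-action --- so that the output really is defined over $K$ and not merely over $\bK$ --- is the delicate point. A secondary technicality, already flagged above, is the mismatch between the stated generality ($\Delta\in H^\times$ arbitrary) and the geometric input (which needs $D_\Delta$ to be a smooth genus one curve of degree $p^2$, known only for $\Delta\in\tilde{\mathcal{H}}_K$); one handles this by keeping the construction of the quadrics formal and verifying their independence directly.
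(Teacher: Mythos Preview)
Your weight-space dimension count is correct and in fact recovers the paper's grouping: the weight-$R$ monomials are precisely the $z_{x+P}z_{x-P}$ for the unique flex $x$ with $2(x-x_0)=R$ and $P$ ranging over $E[p]/\{\pm 1\}$. But the two issues you label as secondary are in fact primary, and the paper's argument is organised precisely to avoid them.

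The circularity is genuine and fatal. At this point in the paper nothing is known about $D_\Delta$ beyond its defining equations; that $D_\Delta$ is a genus one normal curve of degree $p^2$ (even for $\Delta\in\tilde{\mathcal{H}}_K$) is the content of Lemma~\ref{EmbeddC}, whose proof \emph{uses} the $p^2(p^2-3)/2$ quadrics of the present lemma to identify $D_\Delta$ with the image of an explicit embedding $g:C\hookrightarrow\PP^{p^2-1}$. So you cannot invoke Riemann--Roch on $D_\Delta$, projective normality of $D_\Delta$, or the deck-group action here. Your fallback --- keep the construction formal and verify independence afterwards --- does not escape this: your recipe for the relation coefficients is ``evaluate at two points of $D_\Delta$'', which again presupposes that $D_\Delta$ is a curve, and the degree-$p$ relations $z_x^p\sim\tilde{\ell}_{p[x]}(u)$ you cite do not by themselves produce quadrics.

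The paper sidesteps all of this by applying Riemann--Roch to $C$ rather than to $D_\Delta$. Under $H\simeq F\times H_2$ the defining equation becomes the pair $\tilde{\ell}_1=a\Delta_1 z^p$ and $\tilde{\ell}_2=a\Delta_2\partial_2(z)$; since $\partial_2(z)=z^{p-2}\tilde{N}(z)$ with $\tilde{N}$ quadratic (with a variant via an auxiliary algebra $M$ when $p=3$), taking the ratio and clearing gives, in the characteristic-function basis,
\[
G_{(x,P)}\cdot z_x^2=\tilde{\Delta}_{(x,P)}\cdot z_{x+P}z_{x-P},\qquad G_{(x,P)}:=\tilde{\ell}_{(x,P)}/\tilde{\ell}_{(x,0)}\in\kappa(\bC)^\times.
\]
These are quadrics in $z$ whose coefficients are rational functions on $C$, derived purely from the defining equations of $D_\Delta$ with no hypothesis on $\Delta$. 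Now the Riemann--Roch input: each $G_{(x,P)}$ has divisor $[x+P]+[x-P]-2[x]$, hence lies in the $2$-dimensional space $\mathcal{L}(2[x])$ on $C$. Writing $G_{(x,P)}=a_PG_{(x,P_0)}+b_P$ for a fixed $P_0$ and substituting yields, for each $x$ and each $P\notin\{0,\pm P_0\}$, a quadric with constant coefficients; independence is immediate because $\tilde{\Delta}_{(x,P)}\ne 0$ and the monomial supports for distinct $x$ are disjoint. The $K$-rationality follows because the whole family, before passing to the characteristic-function basis, is written over $H_2$ (or $M$) and then eliminated by linear algebra over $K$.

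So the missing idea is: do not try to read the relations off $D_\Delta$; read them off $C$. The two-dimensionality of $\mathcal{L}(2[x])$ on $C$ is what replaces the two-dimensionality of your weight-$R$ piece of $\HH^0(D_\Delta,\CO(2))$, and it is available unconditionally.
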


\begin{Remark}
We are not (yet) claiming that these quadrics define $D_\Delta$; we have also not assumed that $\Delta \in \tilde{\mathcal{H}}_K$.
\end{Remark}

\begin{Proof}
Under the splitting $H \simeq F \times H_2$, write $\Delta = (\Delta_1,\Delta_2)$. The equation $\tilde{\ell} = a\Delta\partial(z)$ corresponds to the two equations 
\begin{align}
\label{2eqs}
\tilde{\ell}_1 = a\Delta_1 z^p \text{ and } \tilde{\ell}_2 = a\Delta_2 \partial_2(z)\,.
\end{align}

First consider the case $p \ge 5$. Recall that, as a $G_K$-set $Y_2 \simeq X \times \frac{E[p]\setminus\{0_E\}}{\{\pm 1\}}$ and that $F$ may be viewed as a subalgebra of $H_2$. The hyperplanes in $Y_2$ cut out divisors on $C$ of the form $(p-2)[x] + [x+P] + [x-P]$. So there is a quadratic form $\tilde{N}$ such that $\partial_2(z) = z^{p-2}\tilde{N}(z)$. We can obtain a homogeneous equation in $H_2$ by taking the ratio of the two equations in (\ref{2eqs}) and multiplying through by $z^2$. We get
\begin{align}
\label{1eqs}
\frac{\tilde{\ell}_2}{\tilde{\ell}_1}\cdot z^2 = \left(\frac{\Delta_2}{\Delta_1}\right)\cdot\tilde{N}(z)\,.
\end{align}

To achieve the same when $p=3$, recall that $F$ corresponds to the $G_K$-set $X$ consisting of the $9$ flex points while $H_2$ corresponds to the $G_K$-set $Y_2$ consisting of the $12$ lines in $\PP^2$ passing through three distinct flex points. We can no longer view $F$ as a subalgebra of $H_2$. Instead we work with the \'etale algebra $M = \Map_K(Z,\bK)$ associated to the $G_K$-set $Z$ consisting of all pairs $(x,y) \in X\times Y_2$ such that $x \in y$. Each flex is contained in four lines, while each line passes through three flexes so \[ [M:F] = 4 \textnormal{ and } [M:H_2] = 3\,.\] The induced norm  \[\partial=\partial_1\times\partial_2:F \to F\times H_2\] is given by $\partial_1(z) = z^3$ and $\partial_2(z) = N_{M/H_2}(z)$. So, identifying $z$ with its image in $M$, we can write $\partial_2(z) = z\tilde{N}(z)$ for some quadratic form $\tilde{N}$. Over $M$ we can write our equations as 
\[ \tilde{\ell}_1 = a\Delta_1 z^3 \text{ and } \tilde{\ell}_2 = a\Delta_2 z \tilde{N}(z)\,.\] Again we can obtain a homogeneous equation in $M$ by taking the ratio. We get an equation
\[ \frac{\tilde{\ell}_2}{\tilde{\ell}_1}\cdot z^2 = \left(\frac{\Delta_2}{\Delta_1}\right)\cdot \tilde{N}(z)\,.\] 

Formally this is exactly what was obtained for $p \ge 5$. Note also that $[M:K] = 3^2(3^2-1)/2$, while for larger $p$ we have $[H_2:K] = p^2(p^2-1)/2$. So in either case, writing the equation out in terms of the basis over $K$ gives $p^2(p^2-1)/2$ quadrics some of whose coefficients are rational functions on $C$. These can be eliminated using linear algebra over $K$ to obtain a set of quadrics with coefficients in $K$ which vanish on $D_\Delta$.

We want to count the number of independent quadrics left after eliminating. For this we may work geometrically. For $p \ge 5$ we can index the elements of $Y$ by pairs $(x,P) \in X \times \frac{E[p]}{\{\pm 1\}}$. The linear form $\tilde{\ell}$ splits over $\bK$ as $\tilde{\ell} = (\tilde{\ell}_{(x,P)})$, where $\tilde{\ell}_{(x,P)}$ is a linear form with coefficients in $\bK$ defining the hyperplane whose intersection with $C$ is given by the divisor $(p-2)[x]+[x+P]+[x-P]$. Note that $P=0_E$ is allowed. For $p=3$ we can do the same, but with the caveat that the indexing is no longer unique. Namely, each line $y \in Y_2$ corresponds to three pairs $(x,P) \in X \times \frac{E[p]}{\{\pm 1\}}$ (we get one pair for each $x \in y$). In any case, we can still use the index $_{(x,P)}$ to denote the factor of $\bH$ corresponding to the line in $\PP^2$ whose intersection with $C$ is given by the divisor $[x] + [x+P] + [x-P]$.

The notation is such that for distinct $(x,P) \in X \times \frac{E[p]\setminus\{0\}}{\{\pm 1\}}$, we have distinct rational functions \[ G_{(x,P)} := \frac{\tilde{\ell}_{(x,P)}}{\tilde{\ell}_{(x,0)}}\in\kappa(\bC)^\times\] with divisors $\diw(G_{(x,P)}) = [x+P]+[x-P]-2[x]$. Over $\bK$, we can work with the basis of $\bF$ given by the characteristic functions and use $(z_x)$ for coordinates on $\PP^{p^2-1}$. In terms of these and the $G_{(x,P)}$ the homogeneous equation (\ref{1eqs}) corresponds to a system of equations
\begin{align}
\label{gxp}
G_{(x,P)}\cdot z_x^2 = \tilde{\Delta}_{(x,P)}\cdot z_{x+P}z_{x-P}\,,
\end{align} parameterized by $(x,P) \in \frac{E[p]\setminus\{0\}}{\{\pm 1\}}$, where for simplicity we have denoted $\Delta_{(x,P)}/\Delta_{(x,0)}$ by $\tilde{\Delta}_{(x,P)}$.

For fixed $x$, the $(p^2+1)/2$ linear forms $\tilde{\ell}_{(x,P)}$, with $P \in E[p]/\{\pm1\}$, all define hyperplanes meeting $C$ in $x$ with multiplicity at least $p-2$. This gives $p-2$ nontrivial relations among them. The matrix given by the coefficients of the $\tilde{\ell}_{(x,P)}(u_1,\dots,u_p)$ has rank $\le p-(p-2) = 2$. On the other hand, the rank must be greater than one since these do not all define the same hyperplane. This introduces a dependence among the $G_{(x,P)}$. Alternatively one can argue that the functions $G_{(x,P)}$ are all in the Riemann-Roch space $\mathcal{L}(2[x])$ which has dimension $2$. 

In any event, if we fix $P_0 \in E[p]\setminus\{0\}$, then for any $P\in \frac{E[p]\setminus\{0\}}{\{\pm 1\}}$, we can find $a_{P},b_{P} \in \bK$ such that \[ G_{(x,P)} = a_{P}G_{(x,P_0)} + b_{P}\,.\] Using this to eliminate the $G_{(x,P)}$ from (\ref{gxp}) we obtain a set of quadrics
\[ a_{P}\tilde{\Delta}_{(x,P_0)}\cdot z_{x+P_0}z_{x-P_0} + b_{P}\cdot z_x^2 = \tilde{\Delta}_{(x,P)}\cdot z_{x+P}z_{x-P}\,,\] parameterized by $P \in \frac{E[p] \setminus\{0,\pm P_0\}}{\{\pm1\}}$ and with coefficients in $\bK$. Since $\tilde{\Delta}_{(x,P)} \ne 0$, these are necessarily independent. Note also that the monomials appearing in these quadrics are all of the form $z_{x+Q}z_{x-Q}$ for some $Q \in E[p]/\{\pm 1\}$. A different choice for $x$ leads to quadrics involving a disjoint set of monomials. So, in total this gives a set of $\#X\cdot\#\left(\frac{E[p] \setminus\{0,\pm P_0\}}{\{\pm1\}}\right) = p^2(p^2-3)/2$ independent quadrics as required.
\end{Proof}

There is an obvious action of $\bF^\times$ on $(\bF\setminus\{0\})/\bK^\times$ by multiplication. The choice of basis gives an identification of the latter with the $\bK$-points of $\PP^{p^2-1}$ and hence a representation \[ \bF^\times \ni \alpha \mapsto \varphi_\alpha \in \PGL_{p^2} = \Aut(\PP^{p^2-1})\,.\] Working with the basis of $\bF$ given by the characteristic functions, the representation takes the particularly simple form $\alpha = (\alpha_x) \mapsto \text{Diagonal}(\alpha_x)$; this is just coordinate-wise multiplication. Assuming we are working with a basis for $F$ over $K$, we see that for any extension of fields $K'/K$, \[\varphi_\alpha \in \PGL_{p^2}(K') \Leftrightarrow \alpha \in (F\otimes_KK')^\times\,.\]

\begin{Lemma}
\label{twistDelta}
For any $\Delta \in \bH^\times$ and $\alpha \in \bF^\times$, the action of $\alpha$ on $\PP^{p^2-1}$ induces an isomorphism (of $C$-schemes) $\varphi_\alpha:D_{\partial(\alpha)\cdot\Delta} \to D_\Delta$.
\end{Lemma}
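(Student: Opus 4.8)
The plan is to argue geometrically over $\bK$, using the basis of $\bF$ given by the characteristic functions $\{e_x\}_{x\in X}$ and extending it to a basis of $\bH$ by the characteristic functions on $Y$ (identifying $x\in X$ with the hyperplane of $Y$ cutting out $p[x]$ on $C$, as in the discussion preceding the lemma); replacing the basis of $F$ used to define the $D_\Delta$ only conjugates everything in sight by a fixed element of $\PGL_{p^2}$, so there is no loss of generality. In these coordinates $\varphi_\alpha$ is the diagonal map $(z_x)\mapsto(\alpha_x z_x)$, and for $z=(z_x)\in\bF$ one has $\partial(z)=\bigl(\prod_{x\in y}z_x^{n_{x,y}}\bigr)_{y\in Y}$, where $y=\sum_x n_{x,y}[x]$. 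The point of this basis is the resulting multiplicativity
\[ \partial(\alpha\cdot z)=\partial(\alpha)\cdot\partial(z)\qquad\text{(coordinatewise product in }\bH\text{),}\]
valid for all $\alpha,z\in\bF$; in particular $\partial(\alpha^{-1})=\partial(\alpha)^{-1}$.

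Next I would trace the diagonal substitution $z\mapsto\alpha z$ through the construction of $(D_\Delta,\pi_\Delta)$. Recall that this construction starts from the ``master equation'' $\tilde\ell=a\,\Delta\,\partial(z)$ (with $a\in\bK^\times$, $z\in\bF\setminus\{0\}$ unknown): expanding in the chosen basis of $\bH$ yields $m=[\bH:\bK]$ equations of the shape ``linear in $u_1,\dots,u_p$'' $=$ ``homogeneous of degree $p$ in the $z_x$''; the $m\times p$ matrix formed by the left-hand sides has rank $p$, so one solves $p$ of them as $u_i=a\,\pi_i(z)$ and substitutes into the remaining $m-p$ to obtain relations $0=a\,P_i(z)$; finally $D_\Delta$ is the reduced subscheme of $\PP^{p^2-1}$ defined by the $P_i$ together with the $Q_j(\pi_1,\dots,\pi_p)$, with covering map $u_i=\pi_i$. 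The left-hand sides of the $m$ equations come only from $\tilde\ell$ and are independent of $\Delta$, while by the multiplicativity above, replacing $\Delta$ with $\partial(\alpha)\Delta$ has precisely the same effect on the right-hand sides as replacing $z$ with $\alpha z$. Hence the entire elimination carried out for $\partial(\alpha)\Delta$ is obtained from the one for $\Delta$ by the substitution $z_x\mapsto\alpha_x z_x$: the same $p$ equations may be solved (the left-hand-side matrix is unchanged), and since $\pi_i$ and $P_i$ are $\bK$-linear combinations with $\Delta$-independent coefficients of the right-hand-side polynomials, we get $\pi_i^{\partial(\alpha)\Delta}(z)=\pi_i^{\Delta}(\alpha z)$, $P_i^{\partial(\alpha)\Delta}(z)=P_i^{\Delta}(\alpha z)$, and likewise $Q_j(\pi^{\partial(\alpha)\Delta})(z)=Q_j(\pi^\Delta)(\alpha z)$. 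This says exactly that $\varphi_\alpha^{*}$ carries the defining ideal of $D_\Delta$ onto that of $D_{\partial(\alpha)\Delta}$; since $\varphi_\alpha$ is an automorphism of $\PP^{p^2-1}$ (inducing a graded automorphism of the homogeneous coordinate ring, hence compatible with passing to reduced structures) it restricts to an isomorphism $\varphi_\alpha:D_{\partial(\alpha)\Delta}\To D_\Delta$, and the identity $a\,\pi_i^{\partial(\alpha)\Delta}(z)=a\,\pi_i^{\Delta}(\alpha z)$ of $u$-coordinates shows $\pi_\Delta\circ\varphi_\alpha=\pi_{\partial(\alpha)\Delta}$, so $\varphi_\alpha$ is a morphism of $C$-schemes.

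To upgrade this to an isomorphism of coverings it suffices to exhibit a two-sided inverse: applying the previous step with $\alpha^{-1}$ in place of $\alpha$ and $\partial(\alpha)\Delta$ in place of $\Delta$, and using $\partial(\alpha^{-1})\cdot\partial(\alpha)\Delta=\Delta$, gives a $C$-morphism $\varphi_{\alpha^{-1}}:D_\Delta\To D_{\partial(\alpha)\Delta}$; since $\varphi_\alpha\circ\varphi_{\alpha^{-1}}=\varphi_1=\mathrm{id}$ and $\varphi_{\alpha^{-1}}\circ\varphi_\alpha=\varphi_1=\mathrm{id}$ as automorphisms of $\PP^{p^2-1}$, $\varphi_{\alpha^{-1}}$ is inverse to $\varphi_\alpha$. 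The one point deserving genuine care is the assertion in the middle paragraph that the elimination procedure commutes with the substitution $z\mapsto\alpha z$; this should be routine, since the substitution affects only the $z_x$ while the elimination is linear algebra in the $u_i$ over $\bK$ with right-hand sides polynomial in the $z_x$, and the only scalar introduced is the explicit unknown $a\in\bK^\times$, which drops out projectively.
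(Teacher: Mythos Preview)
Your proof is correct and follows essentially the same approach as the paper: both rest on the multiplicativity $\partial(\alpha z)=\partial(\alpha)\partial(z)$, which shows that the master equation $\tilde\ell=a\,(\partial(\alpha)\Delta)\,\partial(z)$ for $D_{\partial(\alpha)\Delta}$ becomes the master equation $\tilde\ell=a\,\Delta\,\partial(\alpha z)$ for $D_\Delta$ under $z\mapsto\alpha z$. The paper simply phrases this pointwise (if $Q\in D_{\partial(\alpha)\Delta}$ lies over $P\in C$ then $\alpha Q\in D_\Delta$ lies over $P$), whereas you trace it through the elimination and defining ideals; note incidentally that the multiplicativity of $\partial$ is intrinsic to the norm map and does not actually require the characteristic-function basis.
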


\begin{Corollary}
\label{twistDeltaK}
Let $\Delta \in H^\times$ and $(D_\Delta,\pi_\Delta)$ be the corresponding $C$-scheme. The coset $\Delta\mathcal{H}^0_K \subset H^\times/K^\times\partial F^\times$ parameterizes a set of twists of $(D_\Delta,\pi_\Delta)$ as a $C$-scheme defined over $K$ up to $K$-isomorphism. 
\end{Corollary}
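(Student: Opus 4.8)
The statement will follow from Lemma~\ref{twistDelta} once one keeps careful track of the three subgroups $K^\times$, $\partial F^\times$ and $(\partial\bF^\times)^{G_K}$ of $H^\times$, and of the rationality criterion $\varphi_\alpha\in\PGL_{p^2}(K')\Leftrightarrow\alpha\in(F\otimes_KK')^\times$ recorded just before that lemma. The plan is to exhibit a well-defined map from the coset $\Delta\mathcal{H}_K^0\subseteq H^\times/K^\times\partial F^\times$ to the set of $K$-isomorphism classes of $C$-schemes, and to check that every value of this map is a twist of $(D_\Delta,\pi_\Delta)$.

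First I would lift a class in $\Delta\mathcal{H}_K^0$ to an element $\Delta'=\Delta\cdot w\in H^\times$, where $w\in(\partial\bF^\times)^{G_K}\subseteq(\bH^\times)^{G_K}=H^\times$. Since $\Delta'\in H^\times$, the construction of this section attaches to it a $C$-scheme $(D_{\Delta'},\pi_{\Delta'})$ defined over $K$. Writing $w=\partial(\alpha)$ with $\alpha\in\bF^\times$, Lemma~\ref{twistDelta} supplies an isomorphism of $C$-schemes $\varphi_\alpha\colon D_{\Delta'}=D_{\partial(\alpha)\Delta}\to D_\Delta$ over $\bK$, so $(D_{\Delta'},\pi_{\Delta'})$ is $\bK$-isomorphic to $(D_\Delta,\pi_\Delta)$ and hence is a twist of it as a $C$-scheme. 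This gives the ``twist'' half of the assertion.

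Next I would check that the $K$-isomorphism class of $(D_{\Delta'},\pi_{\Delta'})$ depends only on the class of $\Delta'$ modulo $K^\times\partial F^\times$. Suppose $\Delta''=c\,\partial(f)\,\Delta'$ with $c\in K^\times$ and $f\in F^\times$. Rescaling by $c$ changes nothing, since $(D_\Delta,\pi_\Delta)$ depends only on the class of $\Delta$ in $H^\times/K^\times$, so $(D_{\Delta''},\pi_{\Delta''})=(D_{\partial(f)\Delta'},\pi_{\partial(f)\Delta'})$; and Lemma~\ref{twistDelta} with $\alpha=f$ gives an isomorphism of $C$-schemes $\varphi_f\colon D_{\partial(f)\Delta'}\to D_{\Delta'}$ which, because $f\in F^\times$, lies in $\PGL_{p^2}(K)$ and is therefore defined over $K$. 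Hence $(D_{\Delta''},\pi_{\Delta''})\cong_K(D_{\Delta'},\pi_{\Delta'})$, and the assignment descends to the desired well-defined map on $\Delta\mathcal{H}_K^0$.

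The only step that is more than bookkeeping is the passage between $\bK$-isomorphisms and $K$-isomorphisms, i.e.\ knowing precisely when $\varphi_\alpha$ descends to $K$; this is exactly what $\ker(\partial|_{\bF^\times})=\Aff(X,\mu_p)$ and the rationality criterion for $\bF^\times\to\PGL_{p^2}$ control, and it is what forces the denominator $K^\times\partial F^\times$ rather than merely $\partial F^\times$ in the statement. I would not attempt to prove injectivity of the parameterization at this level of generality; when $\Delta\in\tilde{\mathcal{H}}_K$ the stronger fact that $\Delta'\mapsto(D_{\Delta'},\pi_{\Delta'})$ is a bijection onto $\Cov_0^{(p)}(C/K)$ inverting the descent map is the content of Theorem~\ref{ConstructionWorks} together with Corollary~\ref{Injective}, and I would present Corollary~\ref{twistDeltaK} only as the immediate structural consequence of Lemma~\ref{twistDelta}.
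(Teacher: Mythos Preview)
Your proposal is correct and follows essentially the same route as the paper: both arguments use Lemma~\ref{twistDelta} to see that every element of $\Delta\tilde{\mathcal{H}}^0_K$ yields a twist of $(D_\Delta,\pi_\Delta)$, and then invoke the rationality criterion $\varphi_\alpha\in\PGL_{p^2}(K)\Leftrightarrow\alpha\in F^\times$ to conclude that modding out by $K^\times\partial F^\times$ gives a well-defined map to $K$-isomorphism classes. Your treatment is slightly more explicit in separating the $K^\times$ factor (handled by the construction's invariance) from the $\partial F^\times$ factor (handled by $\varphi_f$ being $K$-rational), and in noting that injectivity is not being claimed here; the paper's proof packages these observations more tersely but makes the same points.
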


\begin{Proof} To prove the lemma, use that $D_{\partial(\alpha)\cdot\Delta}$ is defined by the equation $\tilde{\ell} = \partial(\alpha)\Delta\partial(z)$. If $Q \in D_{\partial(\alpha)\cdot\Delta}$ is any point mapping to, say $P \in C$, then the point $\alpha Q \in \PP^{p^2-1}$ evidently satisfies \[\Delta \partial(\alpha Q) \equiv \Delta \partial(\alpha)\partial(Q) \equiv \tilde{\ell}(P) \Mod K^\times\,.\] The equivalence here is meant for any choices of coordinates for $P$ and $Q$. This means $\alpha Q$ is a point of $D_\Delta$ lying above $P$. This proves the lemma.

The lemma implies that if $\Delta \in H^\times$, then the $C$-schemes corresponding to the elements of $\Delta\tilde{\mathcal{H}}^0_K = \Delta(\partial{\bF^\times})^{G_K}$ are all twists of $(D_\Delta,\pi_\Delta)$. The isomorphism $\varphi_\alpha:D_{\partial(\alpha)\Delta} \to D_\Delta$ is defined over $K$ if and only if $\alpha \in F^\times$ in which case $\partial(\alpha)\Delta$ and $\Delta$ differ by an element of $K^\times\partial F^\times$. So $\Delta \mathcal{H}^0_K$ parameterizes the corresponding twists in $\Delta\tilde{\mathcal{H}}_K^0$ up to $K$-isomorphism.
\end{Proof}

By definition, any twist of a $p$-covering is a $p$-covering, so we can reduce to the geometric situation. To prove the theorem, it is enough to show that there is some $\Delta \in \tilde{\mathcal{H}}_{\bK}$ such that $(D_{\Delta},\pi_{\Delta})$ is a $p$-covering of $C$ defined over $\bK$. The proof of the following lemma also shows that for $\Delta \in \tilde{\mathcal{H}}_K$, the $p^2(p^2-3)/2$ quadrics obtained in Lemma \ref{ComputeQuadrics} generate the homogenous ideal of $D_\Delta$.

\begin{Lemma}
\label{EmbeddC}
There exists some $\Delta \in \tilde{\mathcal{H}}_{\bK}$ such that $(D_{\Delta},\pi_{\Delta})$ is a $p$-covering of $C$.
\end{Lemma}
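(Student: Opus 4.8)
The plan is to exhibit a concrete $\Delta$ over $\bK$ coming from an honest flex linear form on a chosen $p$-covering, and then show the construction recovers that covering. Working over $\bK$, the obstruction vanishes automatically, so $\Cov_0^{(p)}(C/\bK) = \Cov^{(p)}(C/\bK)$ is nonempty; fix any $(D,\pi)$ in it. By Lemma \ref{pbflex} we may realize $D$ as a genus one normal curve of degree $p^2$ in $\PP^{p^2-1}$ such that, for each flex $x \in X$, the pull-back $\pi^*[x]$ is a hyperplane section; patch the defining linear forms into $h \in \bF[z_1,\dots,z_{p^2}]$, a linear form (over $\bF$) defining the pull-back of the generic flex. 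Since $\ell$ and $\tilde\ell$ are already fixed, the relation of Theorem \ref{PhionCov} gives a $\Delta \in \bH^\times$ with $\tilde\ell\circ\pi = \Delta\,\partial h$ in the coordinate ring of $D$; by construction $\Delta \in \tilde{\mathcal H}_{\bK}$ (it is $G_{\bK}$-invariant trivially, and represents the class $\tilde\Phi((D,\pi))$ inside $\ell(\pi(Q))\cdot\partial\bF^\times$ for a generic point $Q$).

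Next I would choose the basis of $\bF$ over $\bK$ to be the one induced by $h$: more precisely, use the coordinates $z_1,\dots,z_{p^2}$ on $\PP^{p^2-1}$ already carrying $D$, which are dual to the hyperplanes $\pi^*[x]$, so that $h$ becomes (up to scaling the individual coordinates) the tautological linear form $z = \sum z_x e_x$ in the characteristic-function basis. With this identification, the equation $\tilde\ell = a\,\Delta\,\partial(z)$ defining $D_\Delta$ becomes literally the relation $\tilde\ell\circ\pi = \Delta\,\partial h$ read coordinate-wise: the point $Q = (z_x(Q)) \in D$ satisfies it with $a$ absorbing $u$. Hence the map $D \to \PP^{p^2-1}$, $Q \mapsto (z_x(Q))$, lands in $D_\Delta$, and the elimination producing $\pi_\Delta$ recovers $\pi$ on the image. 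So there is a morphism $D \to D_\Delta$ over $\bK$ compatible with the maps to $C$; I must check it is an isomorphism. Surjectivity onto $D_\Delta$ follows because $D_\Delta$ is cut out (reduced) by exactly the equations $D$ satisfies together with the $Q_j(\pi_\bullet)$ forcing the image to lie on $C$, and $D$ is already a genus one curve of degree $p^2$ in $\PP^{p^2-1}$, hence projectively normal and equal to its own image; injectivity/closed-immersion-ness is automatic since the coordinates $z_x$ already give the degree-$p^2$ embedding of $D$.

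Finally, for $\Delta \in \tilde{\mathcal H}_K$ the quadrics of Lemma \ref{ComputeQuadrics} generate the homogeneous ideal: the previous paragraph identifies $D_\Delta$ geometrically with a genus one normal curve of degree $p^2$, whose ideal is generated by the $p^2(p^2-3)/2$-dimensional space of quadrics, and we already produced that many linearly independent quadrics vanishing on it, so they must be a basis of the degree-$2$ part of the ideal; being linearly independent they then cut out $D_\Delta$. The main obstacle I anticipate is the bookkeeping in the second paragraph: verifying that the basis of $\bF$ can be matched to the coordinates of the given projective model of $D$ so cleanly that the abstract relation $\tilde\ell\circ\pi = \Delta\,\partial h$ coincides on the nose with the defining equation $\tilde\ell = a\,\Delta\,\partial(z)$ of $D_\Delta$ — in particular tracking how the auxiliary linear form $u$ and the scalar $a$ interact, and checking that the full-rank claim on the linear part of (\ref{eqtype}) is exactly what makes the elimination recover $\pi$ rather than something weaker. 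Once that identification is in place the rest is formal.
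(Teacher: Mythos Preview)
Your plan is essentially the paper's own argument: work over $\bK$, pick a $p$-covering, realize it so that each $\pi^*[x]$ is a hyperplane section, read off $\Delta$ from the relation $\tilde{\ell}\circ\pi = \Delta\,\partial h$, and then use the quadric count of Lemma~\ref{ComputeQuadrics} to conclude that the image equals $D_\Delta$. The paper just makes everything concrete by taking $(D,\pi)$ to be multiplication by $p$ on $(C,x_0)$ and writing the embedding explicitly via the functions $G_x$ with $\diw(G_x) = \pi^*[x] - \pi^*[x_0]$, so that in the characteristic-function basis one literally has $h_x = z_x$.

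The one genuine gap in your version is the step you yourself flag as the ``main obstacle'': aligning the basis of $\bF$ with the coordinates on the given model of $D$ so that $h$ becomes the tautological form $z$. This is exactly the assertion that the $p^2$ linear forms $h_x$ (equivalently the functions $h_x/h_{x_0} \in \mathcal{L}(\pi^*[x_0])$) are linearly independent, and it is not automatic from Lemma~\ref{pbflex}, which only guarantees that each $\pi^*[x]$ is \emph{some} hyperplane section. The paper supplies the missing argument: the $G_x$ are eigenfunctions for the translation action of $E[p]$ on $\mathcal{L}(\pi^*[x_0])$, with pairwise distinct characters given by the Weil pairing, hence linearly independent (cf.\ \cite[II, Proposition 3.3]{CFOSS}). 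Once you insert this, your coordinate change is legitimate and the rest of your outline goes through verbatim, including the final quadric-counting step.
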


\begin{Proof}
For this we may work over $\bK$, using the basis given by the characteristic functions and $z_x$ for coordinates on $\PP^{p^2-1}$. Choosing any flex point $x_0 \in X$ as origin, we may consider $(C,x_0)$ as an elliptic curve over $\bK$. Denote the multiplication by $p$ map on $(C,x_0)$ by $\pi:C \to C$. This is a $p$-covering of $C$. We are going to find some $\Delta \in \bH^\times$ representing the image of $(C,\pi)$ under the descent map and then show that the scheme $D_\Delta$ produced by the construction above is equal to the image of $(C,\pi)$ under a certain embedding into $\PP^{p^2-1}$ as a genus one normal curve of degree $p^2$.

To compute the image of $(C,\pi)$ under the descent map we use the definition. Namely, we embed $C$ in $\PP^{p^2-1}$ in such a way that the pull-back of any flex point is a hyperplane section. This amounts to finding a basis for the Riemann-Roch space of the divisor $\pi^*[x_0]$. For each $x\in X$, we can find a rational function $G_x \in \kappa(\bC)^\times$ with divisor $\diw(G_x) = \pi^*[x] - \pi^*[x_0]$. For existence of these functions, note that $\pi$ is multiplication by $p$ on the elliptic curve $(C,x_0)$ and recall that the Weil pairing on $(C,x_0)$ is defined in terms of such functions (see \cite[III.8]{Silverman}). By Riemann-Roch the dimension of $\mathcal{L}(\pi^*[x_0])$ is $p^2 = \#X$. Clearly the $G_x$ lie in the Riemann-Roch space, so it will suffice to show that they are linearly independent. This follows from the definition of the Weil pairing; the $G_x$ are eigenfunctions for distinct characters with respect to the action of $X=C[p]$ by translation. (see the first paragraph of the proof of \cite[II, Proposition 3.3]{CFOSS}).

Thus we may define an embedding of $C$ into $\PP^{p^2-1}$ via 
\[ g:C\ni P \mapsto (G_x(P))_{x\in X} \in \PP^{p^2-1}\,.\] It is evident that the pull-back of any flex point $x \in X$ by $\pi$ is the hyperplane section of $g(C) \subset \PP^{p^2-1}$ cut out by $z_x = 0$. Let $Q \in C \setminus C[p^2]$ be any point, with projective coordinates $g(Q)$. By the definition of the descent map, the image of $(C,\pi)$ under the descent map is represented by the $\Delta \in \bH^\times$ such that
\begin{align}
\label{anothereq}
\tilde{\ell}(\pi(Q)) = \Delta\partial(g(Q))\,.
\end{align} By definition we have that $\Delta \in \tilde{\mathcal{H}}_{\bK}$.

Equation (\ref{anothereq}) was also used to construct $D_\Delta$. So it is clear that $\pi_\Delta \circ g = \pi$ on $C\setminus C[p^2]$ and that the image of this open subscheme under $g$ is contained in $D_\Delta$. Since $D_\Delta$ is projective (hence complete), this is then true on all of $C$. We conclude that $g(C) \subset D_\Delta$ and that $\pi_\Delta\circ g = \pi$. On the other hand, $g(C)$ is a genus one normal curve of degree $p^2$. Its homogeneous ideal can be generated by a $\bK$-vector space of quadrics of dimension $p^2(p^2-3)/2$. We have already found a set of $p^2(p^2-3)/2$ linearly independent quadrics vanishing on $D_\Delta$ in Lemma \ref{ComputeQuadrics}, so we must have $g(C) = D_\Delta$. Thus $(D_\Delta,\pi_\Delta)$ is a twist of $(C,\pi)$. This completes the proof.
\end{Proof}

\section{Computing the $p$-Selmer Set}
\label{ComputingTheSelmerSet}
We shift our focus now to the arithmetic situation. We specialize to the case that $K=k$ is a number field. We assume that $C$ is an everywhere locally solvable genus one normal curve of degree $p$ defined over $k$. Recall that local solvability implies that $\Pic(C) = \Pic(\bC)^{G_k}$ and that $\Cov^{(p)}(C/k) \ne \emptyset$. Thus all of the material of Sections \ref{AffineStructure} -- \ref{InverseOfTheDescentMap} applies to $C$.

An element in an \'etale $k$-algebra $A \simeq \prod_i K_i$ will be called {\em integral} if its image in each $K_i$ is integral. We assume that the linear form $\tilde{\ell}$ defining the descent map and all polynomials appearing in the model for $C$ have integral coefficients. We further assume that the constants $c \in k^\times$ and $\beta \in H_2^\times$ given by \ref{candbeta} are integral. All of this can be achieved by scaling. We denote the completion of $k$ at a prime $v$ by $k_v$. We attach a subscript $v$ to any object defined over $k$ to denote the corresponding object over $k_v$ obtained by extension of scalars. For example $H_v=H\otimes k_v$, $\tilde{\mathcal{H}}_v = \tilde{\mathcal{H}}_{k_v}$, $C_v = C \otimes k_v$, and so on.

\subsection{The algebraic Selmer set}
\label{AlgebraicSelmerSet}
The descent map allows us to identify $\Sel^{(p)}(C/k)$ with its image in $H^\times/k^\times\partial F^\times$. We now determine the image. This gives an algebraic presentation of the $p$-Selmer set, which can be computed fairly directly.\\

Consider the following diagram:
\[ \xymatrix{	\Pic(C) \ar[rr]^\Phi\ar[d] && H^\times/k^\times\partial F^\times \ar[d]^{\prod_v\res_v} \\
		\prod_v\Pic(C_v) \ar[rr]^{\prod_v\Phi_v} && \prod_v H_v^\times/k_v^\times\partial F_v^\times\,. \\ } \]
If $(D,\pi) \in \Sel^{(p)}(C/k)$ is an everywhere locally solvable $p$-covering of $C$, then its image, $\tilde{\Phi}((D,\pi)) \in H^\times/k^\times\partial F^\times$, has the property that it maps under $\prod_v\res_v$ into the subset $\prod_v \Phi_v(\Pic^1(C_v)) \subset \prod_v \mathcal{H}_v$. This suggests the following definition.

\begin{Definition}
The {\em algebraic $p$-Selmer set of $C$ associated to $\Phi$} is the set
\[ \Sel_{alg}^{(p)}(C/k) = \{ \Delta \in H^\times/k^\times\partial F^\times\,:\,\res_v(\Delta) \in \Phi_v(\Pic^1(C_v)) \text{ for all $v$ }\}\,.\]
\end{Definition}

\begin{Theorem}
The descent map gives a one to one correspondence between the $p$-Selmer set of $C$ and the algebraic $p$-Selmer set of $C$.
\end{Theorem}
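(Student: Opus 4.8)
\begin{Proof}
The plan is to show that the descent map restricts to a bijection from $\Sel^{(p)}(C/k)$ onto $\Sel_{alg}^{(p)}(C/k)$. Injectivity needs no new input: $\Sel^{(p)}(C/k)\subseteq\Cov_0^{(p)}(C/k)$ (the lemma in Section~\ref{obstructionmap}), and $\tilde{\Phi}$ is injective on $\Cov_0^{(p)}(C/k)$ by Corollary~\ref{Injective}. So the whole content is the identification of the image. It will be convenient to first record that $\Phi_v(\Pic^1(C_v))\subseteq\mathcal{H}_v$ for every place $v$: since $C$ is everywhere locally solvable, every class in $\Pic^1(C_v)$ has the form $[R]$ with $R\in C(k_v)$, and taking $R$ as origin turns $[p]\colon C_v\to C_v$ into an element of $\Cov_0^{(p)}(C/k_v)$ (it has a $k_v$-point, hence trivial obstruction) whose image under the descent map for $C_v$ is, by the defining property in Theorem~\ref{PhionCov}, precisely $\Phi_v([R])$.

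For the inclusion $\tilde{\Phi}(\Sel^{(p)}(C/k))\subseteq\Sel_{alg}^{(p)}(C/k)$, I would take $(D,\pi)\in\Sel^{(p)}(C/k)$ and choose a point $Q_v\in D(k_v)$ at each place $v$. Theorem~\ref{PhionCov} then gives $\res_v(\tilde{\Phi}((D,\pi)))=\Phi_v(\pi(Q_v))\in\Phi_v(\Pic^1(C_v))$ for all $v$, which is exactly the condition defining $\Sel_{alg}^{(p)}(C/k)$.

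For the reverse inclusion, start with $\Delta\in\Sel_{alg}^{(p)}(C/k)$. Fixing any single place $v$, the defining condition gives $\res_v(\Delta)\in\Phi_v(\Pic^1(C_v))\subseteq\mathcal{H}_v=\mathcal{H}_{k_v}$, so Lemma~\ref{defer} forces $\Delta$ to represent a class in $\mathcal{H}_k$; by Lemma~\ref{tildeH} we may take $\Delta\in\tilde{\mathcal{H}}_k$, and Theorem~\ref{ConstructionWorks} then yields $(D_\Delta,\pi_\Delta)\in\Cov_0^{(p)}(C/k)$ with $\tilde{\Phi}((D_\Delta,\pi_\Delta))=\Delta$. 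It remains only to check that this covering is locally solvable everywhere. At each place $v$, writing $\res_v(\Delta)=\Phi_v([R_v])$ with $R_v\in C(k_v)$ gives $\ell(R_v)\in\res_v(\Delta)\cdot k_v^\times\partial F_v^\times$, and Lemma~\ref{lifty}, applied over $k_v$, produces a point of $D_\Delta(k_v)$ above $R_v$. Hence $(D_\Delta,\pi_\Delta)\in\Sel^{(p)}(C/k)$ and it maps to $\Delta$; together with injectivity this completes the proof.

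The argument is essentially bookkeeping built on the machinery of Sections~\ref{AffineStructure}--\ref{InverseOfTheDescentMap}; the one place where something genuinely needs to be said is the passage from ``$\res_v(\Delta)$ lies in the local image $\mathcal{H}_v$ at one place'' to ``$\Delta$ lies in the global image $\mathcal{H}_k$''. I expect this to be the crux, but it has already been isolated as Lemma~\ref{defer}: the point is that $\tilde{\mathcal{H}}$ is cut out inside $H^\times$ by conditions of $p$-divisibility type that are insensitive to field extension, so one needs no local--global principle for $\Br(k)$ or $\HH^1(k,E[p])$ here, only the triviality that an element algebraic over the algebraically closed field $\bK$ already lies in $\bK$. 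With Lemma~\ref{defer} available, the remaining steps follow formally from Theorems~\ref{PhionCov} and~\ref{ConstructionWorks} and Lemmas~\ref{tildeH} and~\ref{lifty}.
\end{Proof}
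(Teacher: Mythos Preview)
Your proof is correct and follows essentially the same line as the paper's: both reduce to showing $\Sel^{(p)}_{alg}(C/k)\subset\mathcal{H}_k$ via Lemma~\ref{defer}, after noting $\Phi_v(\Pic^1(C_v))\subset\mathcal{H}_v$. The only minor difference is that for the reverse inclusion you verify local solvability of $(D_\Delta,\pi_\Delta)$ constructively via Theorem~\ref{ConstructionWorks} and Lemma~\ref{lifty}, whereas the paper appeals directly to the defining property of $\tilde{\Phi}$ (together with its injectivity over each $k_v$) to identify $\tilde{\Phi}(\Sel^{(p)}(C/k))$ with $\Sel^{(p)}_{alg}(C/k)\cap\mathcal{H}_k$ in one stroke.
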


\begin{Proof}
\label{SELALG}
The defining property of the descent map shows that the image of $\Sel^{(p)}(C/k)$ is equal to $\Sel^{(p)}_{alg}(C/k) \cap \mathcal{H}_k$. We know that the descent map is injective by Proposition \ref{Injective}, so it suffices to show that $\Sel^{(p)}_{alg}(C/k) \subset \mathcal{H}_k$. This follows from Corollary \ref{defer};  $\Phi_v(\Pic^1(C_v)) \subset \mathcal{H}_v$ and any element of $H^\times/k^\times\partial F^\times$ which restricts into $\mathcal{H}_v$ (for some $v$) is an element of $\mathcal{H}_k$.
\end{Proof}

One can formulate the same definition for divisor classes of degree $0$.
\begin{Definition}
The {\em algebraic $p$-Selmer group of $E = \Jac(C)$} is
\[ \Sel_{alg}^{(p)}(E/k) = \{ \Delta \in H^\times/k^\times\partial F^\times\,:\,\res_v(\Delta) \in \Phi_v(\Pic^0(C_v)) \text{ for all $v$ }\}\,.\]
\end{Definition}

Note that by \ref{defer}, $\Sel_{alg}^{(p)}(E/k) \subset \mathcal{H}^0_k$. Since $\mathcal{H}_k$ is a principal homogeneous space for $\mathcal{H}^0_k$, the same is true of the corresponding Selmer objects.
\begin{Lemma}
\label{FakeCoset}
If the algebraic $p$-Selmer set of $C$ is nonempty, then it is a coset of the algebraic $p$-Selmer group of $E$ inside $H^\times/k^\times\partial F^\times$.
\end{Lemma}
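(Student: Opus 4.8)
\begin{Proof}[Plan]
The plan is to fix any $\Delta_0 \in \Sel_{alg}^{(p)}(C/k)$ (possible by hypothesis) and prove the equality $\Sel_{alg}^{(p)}(C/k) = \Delta_0\cdot\Sel_{alg}^{(p)}(E/k)$ in $H^\times/k^\times\partial F^\times$, which exhibits the algebraic Selmer set as a coset of the algebraic Selmer group. The only input needed beyond the definitions is the local coset structure: for each prime $v$, since $C$ is everywhere locally solvable we have $C(k_v)\ne\emptyset$, hence $\Pic^1(C_v)\ne\emptyset$, and translation by a degree-one divisor class identifies $\Pic^1(C_v)$ with a coset of the subgroup $\Pic^0(C_v)$ in $\Pic(C_v)$. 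Applying the group homomorphism $\Phi_v\colon\Pic(C_v)\to H_v^\times/k_v^\times\partial F_v^\times$ (the local instance of the Proposition defining $\Phi$), we get that $\Phi_v(\Pic^0(C_v))$ is a subgroup and $\Phi_v(\Pic^1(C_v))$ is a coset of it. In particular $\Sel_{alg}^{(p)}(E/k)$, being the preimage under $\prod_v\res_v$ of the subgroup $\prod_v\Phi_v(\Pic^0(C_v))$, is genuinely a subgroup of $H^\times/k^\times\partial F^\times$.

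With this in hand the two inclusions are immediate. If $\Delta\in\Sel_{alg}^{(p)}(E/k)$, then for every $v$ one has $\res_v(\Delta)\in\Phi_v(\Pic^0(C_v))$ and $\res_v(\Delta_0)\in\Phi_v(\Pic^1(C_v))$, so $\res_v(\Delta_0\Delta)=\res_v(\Delta_0)\res_v(\Delta)$ again lies in the coset $\Phi_v(\Pic^1(C_v))$; hence $\Delta_0\Delta\in\Sel_{alg}^{(p)}(C/k)$. Conversely, if $\Delta\in\Sel_{alg}^{(p)}(C/k)$, then for every $v$ both $\res_v(\Delta)$ and $\res_v(\Delta_0)$ lie in the single coset $\Phi_v(\Pic^1(C_v))$ of $\Phi_v(\Pic^0(C_v))$, so their quotient $\res_v(\Delta\Delta_0^{-1})$ lies in the subgroup $\Phi_v(\Pic^0(C_v))$; hence $\Delta\Delta_0^{-1}\in\Sel_{alg}^{(p)}(E/k)$, i.e. $\Delta\in\Delta_0\cdot\Sel_{alg}^{(p)}(E/k)$. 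Together these give $\Sel_{alg}^{(p)}(C/k)=\Delta_0\cdot\Sel_{alg}^{(p)}(E/k)$.

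There is no substantive obstacle here: the argument is entirely formal once one knows that $\Phi_v$ is a homomorphism (already established) and that $\Pic^1(C_v)$ is nonempty and therefore a coset of $\Pic^0(C_v)$ (immediate from everywhere local solvability). The one point worth stating carefully in the writeup is precisely this reduction of the whole statement to the elementary fact that the image of a coset under a group homomorphism is a coset of the image of the subgroup.
\end{Proof}
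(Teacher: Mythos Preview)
Your proposal is correct and follows essentially the same approach as the paper: use everywhere local solvability to see that each $\Pic^1(C_v)$ is a coset of $\Pic^0(C_v)$, push this through the homomorphism $\Phi_v$, and then note that a global element $\Delta_0$ of the algebraic Selmer set simultaneously translates all the local degree-$0$ images to the local degree-$1$ images. The paper's proof is more terse but the content is identical.
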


\begin{Proof}
By assumption $C$ is everywhere locally solvable. So, everywhere locally the group of $k_v$-rational divisor classes of degree $1$ on $C$ is a coset of the group of $k_v$-rational divisor classes of degree $0$. Since $\Phi_v$ is a homomorphism, the same is true of their images in $H_v^\times/k_v^\times\partial F_v^\times$. If the algebraic $p$-Selmer set of $C$ is nonempty, then these cosets can be simultaneously defined by some global element of $\mathcal{H}_k$.
\end{Proof}

Although it is not reflected in the notation, $\Sel_{alg}^{(p)}(C/k)$ depends on our choice of linear form $\tilde{\ell}$ used to define the descent map and the algebraic $p$-Selmer group of $E$ depends on $C$. The next proposition shows, however, that the image of $\Sel_{alg}^{(p)}(E/k)$ in $\HH^1(k,E[p])$ does not.

\begin{Proposition}
The inclusion $\mathcal{H}^0_k \simeq C^\perp \hookrightarrow \HH^1(k,E[p])$ identifies the algebraic $p$-Selmer group of $E$ with the $p$-Selmer group of $E$.
\end{Proposition}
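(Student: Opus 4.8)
The plan is to show that the isomorphism $\mathcal{H}^0_k \simeq C^\perp \hookrightarrow \HH^1(k,E[p])$ of Corollary \ref{desc2} matches the local conditions cutting out $\Sel^{(p)}_{alg}(E/k)$ inside $\mathcal{H}^0_k$ with those cutting out $\Sel^{(p)}(E/k)$ inside $\HH^1(k,E[p])$. Concretely, $\Sel^{(p)}_{alg}(E/k)$ consists of the classes $\Delta \in \mathcal{H}^0_k$ whose restriction to each $k_v$ lies in $\Phi_v(\Pic^0(C_v))$, while $\Sel^{(p)}(E/k)$ consists of the classes in $\HH^1(k,E[p])$ whose restriction to each $k_v$ lies in the image of $\delta_{E_v}:E(k_v)/pE(k_v) \to \HH^1(k_v,E[p])$. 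So the whole statement reduces to a local assertion: for each place $v$, under the identification $\mathcal{H}^0_k \hookrightarrow \HH^1(k,E[p])$ and its local analogue, the subgroup $\Phi_v(\Pic^0(C_v)) \subset \mathcal{H}^0_v$ corresponds to $\delta_{E_v}(E(k_v)/pE(k_v)) = \im\left(\delta_{E_v}\right) \subset \HH^1(k_v,E[p])$.

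First I would observe that everything in Sections \ref{AffineStructure}--\ref{InverseOfTheDescentMap} applies verbatim over each local field $k_v$, since $C$ is everywhere locally solvable and hence $\Pic(C_v) = \Pic(\bC_v)^{G_{k_v}}$ and $\Cov^{(p)}(C_v/k_v) \ne \emptyset$. In particular Corollary \ref{desc2} gives a local isomorphism $C^\perp_v := \left(\partial\bF_v^\times\right)^{G_{k_v}}/k_v^\times\partial F_v^\times \simeq C^\perp \otimes k_v$ — more precisely the annihilator of $C$ in $\HH^1(k_v,E[p])$ — and the restriction map $\mathcal{H}^0_k \to \mathcal{H}^0_v$ is compatible with these identifications and with the restriction $\HH^1(k,E[p]) \to \HH^1(k_v,E[p])$. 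This compatibility is formal: the identifications of Corollary \ref{desc2} are built from the exact sequence \eqref{DefUps} and Hilbert 90, all of which commute with restriction. So it suffices to prove the local statement $\Phi_v(\Pic^0(C_v)) = \im\left(\delta_{E_v}\right)$ as subgroups of $\HH^1(k_v,E[p])$.

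The key step, and the one I expect to be the main obstacle, is precisely this local identification, and the tool for it is Lemma \ref{DescentOnE}. That lemma, applied over $k_v$, says that the diagram relating $\Phi_v$ on $\Pic^0(C_v) = E(k_v)$ to the connecting map $\delta_{E_v}: E(k_v) \to \HH^1(k_v,E[p])$ commutes: the composite $E(k_v) \xrightarrow{\delta_{E_v}} C^\perp_v \hookrightarrow \HH^1(k_v,E[p])$ agrees with $E(k_v) \xrightarrow{\Phi_v} \mathcal{H}^0_v \xrightarrow{\tilde{\Phi}_{0,v}^{-1}} C^\perp_v$. Hence the image of $\Pic^0(C_v)$ under $\Phi_v$, transported into $\HH^1(k_v,E[p])$, is exactly $\delta_{E_v}(E(k_v)) = \delta_{E_v}(E(k_v)/pE(k_v))$. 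Taking intersections over all $v$: an element $\Delta \in \mathcal{H}^0_k$, viewed in $\HH^1(k,E[p])$, lies in $\Sel^{(p)}(E/k)$ iff its restriction to each $k_v$ lies in $\im(\delta_{E_v})$ iff (by the local identification) $\res_v(\Delta) \in \Phi_v(\Pic^0(C_v))$ for all $v$, iff $\Delta \in \Sel^{(p)}_{alg}(E/k)$. The converse containment uses that $\Sel^{(p)}_{alg}(E/k) \subset \mathcal{H}^0_k$, already noted after the definition via Lemma \ref{defer}, together with the fact that $\Sel^{(p)}(E/k) \subset C^\perp$ — which holds because every element of $\Sel^{(p)}(E/k)$ is a class $\xi$ for which $C \cup_p \xi$ vanishes locally everywhere (as $C$ is everywhere locally solvable, so $C\cup_p\xi$ lies in $\Sha$ of the Brauer group, which is trivial by the local-global principle for $\Br(k)$), hence $\xi \in C^\perp$. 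This last point — checking that $\Sel^{(p)}(E/k)$ actually lands in $C^\perp$ so that the identification even makes sense on the nose — is the subtle bookkeeping I would be most careful about; everything else is a diagram chase built on Lemma \ref{DescentOnE} and the compatibility of Corollary \ref{desc2} with localization.
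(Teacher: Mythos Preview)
Your proposal is correct and follows essentially the same route as the paper: both arguments hinge on Lemma~\ref{DescentOnE} applied over each $k_v$ to identify $\Phi_v(\Pic^0(C_v))$ with $\im(\delta_{E_v})$, and both must check separately that $\Sel^{(p)}(E/k) \subset C^\perp$ for the reverse containment. The only cosmetic difference is in this last step: the paper invokes the identity $C \cup_p C' = \Ob_p(C+C') - \Ob_p(C) - \Ob_p(C')$ and observes that all three obstruction terms vanish (since $C$, $C'$, and $C+C'$ are all everywhere locally solvable), whereas you argue that $C \cup_p \xi$ vanishes locally and then apply the local--global principle for $\Br(k)$; these are equivalent, but note that your parenthetical ``as $C$ is everywhere locally solvable'' is not by itself sufficient---you also need that $\xi$ is locally in the image of Kummer (which it is, being a Selmer class), so that both $C$ and $\xi$ lie in the isotropic subspace $\im(\delta_{E_v})$ for the local pairing.
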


\begin{Proof}
We identify $\mathcal{H}^0_k$ with its image in $\HH^1(k,E[p])$ and $E$ with $\Pic^0(C)$. To show that the algebraic Selmer group is contained in the Selmer group we use Lemma \ref{DescentOnE}. This says that the images of $\Phi_v|_{\Pic^0_v(C)}$ and the connecting homomorphism $\delta_v$ from the Kummer sequence of $E/k_v$ are the same. So clearly the algebraic Selmer group is contained in the Selmer group.

For the reverse inclusion it suffices to show that $\Sel^{(p)}(E/k) \subset \mathcal{H}^0_k \simeq C^\perp$. So we need to show that elements of the Selmer group are orthogonal to $C$ with respect to the Weil pairing induced cup product of level $p$. Using that the cup product is the bilinear form associated to the obstruction map we have \[ C \cup_pC' = \Ob_p(C+C') - \Ob_p(C) - \Ob_p(C')\,,\] for any $C' \in \HH^1(k,E[p])$. If $C'$ is everywhere locally solvable then so is $C+C'$ (because the Selmer group is a group). Having points everywhere locally implies trivial obstruction, so all the terms on the right-hand side vanish as required.
\end{Proof}

\subsection{Computable description} In order to compute the algebraic Selmer set explicitly, we need a method for determining these local images. For a given $v$, this is relatively straight forward, but there are infinitely many primes to deal with. We will show that, for all but finitely many primes, the local image can be identified with the unramified subgroup. This feature, which is typical of explicit descents, allows us to apply classical (and effective) finiteness theorems in algebraic number theory to reduce the problem to a finite computation. We note also that since $p$ is assumed to be odd we can ignore all archimedean primes.

For a completion $k_v$ of $k$ at a non-archimedean prime, we use $k^{\unr}_v$ to denote the maximal unramified extension of $k_v$. If $\xi$ is an element of some object defined over $k$, we say that $\xi$ is unramified at $v$ if $\xi$ becomes trivial upon extension of scalars to $k_v^{\unr}$. For Galois cohomology groups $\HH^1(k,-)$, this coincides with the usual definition that $\xi$ be in the kernel of the restriction map to $\HH^1(k_v^{\unr},-)$. For example, a class in $F^\times/k^\times F^{\times p}$ represented by $\delta$ is unramified at $v$ if $\delta \in k_v^{\unr\times} (F\otimes k_v^{\unr})^{\times p}$ or, equivalently if its image under the map $F^\times/k^\times F^{\times p} \hookrightarrow \HH^1(k,\mu_p(\bF)/\mu_p) \to \HH^1(k_v^{\unr},\mu_p(\bar{F_v})/\mu_p)$ is zero. For a finite set of primes $S$ and a $k$-Algebra $A$ we use $A(S,p)$ to denote the the finite group of elements of $A^\times/A^{\times p}$ which are unramified outside $S$.

The first step is to identify a suitable finite set of bad primes. To that end, let $F'$ denote the field extension of $k$ obtained by adjoining the coordinates of all flex points of $C$. We refer to $F'$ as the {\em splitting field of $X$}. We can write the linear form used to define the descent map as $\tilde{\ell} = (\tilde{\ell}_1,\tilde{\ell}_2)$ under the splitting $H \simeq F\times H_2$. Here $\tilde{\ell}_1$ defines a hyperplane section meeting $C$ at a generic flex point with multiplicity $p$. Over $F'$, all flex points are defined, and so $\tilde{\ell}_1$ splits as a $p^2$-tuple, $(\tilde{\ell}_x)_{x\in X}$ of linear forms with coefficients in $F'$ each defining the hyperplane meeting $C$ only at the flex $x \in X$. 

At any non-archimedean prime $w$ of $F'$, we can reduce the $\tilde{\ell}_x \Mod w$. Since this linear form may vanish $\Mod w$, it may fail to define a hyperplane section of the reduction of $C \Mod w$. In some sense this is a situation we would like to avoid. We will refer to a prime $v$ of $k$ as a prime of bad reduction (resp. good reduction) for $\tilde{\ell}$ if there is some (resp. no) prime $w|v$ of $F'$ for which this occurs.

Recall the constant $c \in k^\times$ defined in Lemma \ref{candbeta}. By scaling we may assume $c$ to be integral.

\begin{Lemma}
\label{Unramifiedv}
Let $v$ be a non-archimedean prime of $k$ which is of good reduction for both $C$ and $\tilde{\ell}$ and which is prime to both $p$ and $c$. Then $\Phi_v(\Pic^1_v(C)) \subset H_v^\times/k_v^\times\partial F_v^\times$ is contained in the unramified subgroup.
\end{Lemma}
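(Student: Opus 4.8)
The plan is to show that for a prime $v$ of good reduction (for $C$, $\tilde\ell$) which is prime to $p$ and $c$, every class in the image $\Phi_v(\Pic^1_v(C))$ dies after extension to $k_v^{\unr}$. The cleanest route is to reduce to a statement about points: since $v$ is a prime of good reduction for $C$, the reduction $\widetilde{C}$ is a smooth genus one curve over the residue field, so by Hensel's lemma (and the fact that $\Pic(C_v) = \Pic(\bar C_v)^{G_{k_v}}$, which holds as $C$ is locally solvable) every class in $\Pic^1_v(C)$ is represented by a $k_v^{\unr}$-rational point $P$ — more precisely, lifting a smooth residue point gives a point over the maximal unramified extension. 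So it suffices to show $\Phi(P) \in k_v^{\unr\times}\partial F_{v}^{\unr\times}$ for every $P \in C(k_v^{\unr})$ avoiding $X$ and $\diw(u)$; by a limiting/continuity argument one works over a finite unramified extension $k'/k_v$ and shows $\ell(P) \in \CO_{k'}^\times \cdot \partial(F\otimes k')^\times$ up to $p$-th powers, i.e. that $\ell(P)$ is a unit away from primes over $p$ and the bad primes, of which there are none in our situation.

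**The key steps**, in order: (1) Unwind the definition: $\Phi(P)$ is represented by $\tilde\ell(P)$ evaluated on integral homogeneous coordinates for $P$ (the form $u$ only changes things by a factor in $k_v^{\unr\times}$, which is harmless). So I must bound the valuation of $\tilde\ell_y(P)$ for each hyperplane $y \in Y$, at each prime $w$ of the relevant unramified extension. (2) Observe that $\tilde\ell$ splits over $F'$ (the splitting field of $X$) into the $\tilde\ell_x$, and $v$ being a prime of good reduction for $\tilde\ell$ means precisely that each $\tilde\ell_x$ reduces mod $w$ to a genuine linear form cutting out a hyperplane section of $\widetilde{C}$ — so its zero locus on $\widetilde C$ is the reduction of the flex $\tilde x$, a divisor of degree $p$. (3) If $P \in C(k_v^{\unr})$ reduces to a point $\widetilde P$ which is \emph{not} on the reduction of any flex, then $\tilde\ell_x(P)$ is a unit at $w$ for every $x$, so $\tilde\ell(P)$, hence $\Phi(P)$, is a unit — unramified. (4) For $P$ whose reduction lies on some reduced flex, one must still control $\Phi(P)$. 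Here one uses that $\Pic^1_v(C)$ is generated (modulo $\Pic^0$) by any single degree-one class and $\Pic^0_v(C) = E(k_v)$: by Lemma \ref{DescentOnE}, $\Phi$ on $\Pic^0_v(C)$ factors through the Kummer connecting map $\delta_E$, whose image in $\HH^1(k_v,E[p])$ is unramified because $E$ has good reduction at $v$ and $v \nmid p$ (standard: $E(k_v^{\unr})$ is $p$-divisible, or equivalently $\HH^1_{\unr}(k_v,E[p])$ equals the image of $\delta_E$). So it suffices to produce \emph{one} degree-one class whose $\Phi$-image is unramified — and step (3) produces such a class, since the reductions of the (finitely many) flexes cannot cover all of $\widetilde C(\mathbb F_w)$ for $w$ large, and more robustly one can always choose $P$ over a finite unramified extension with good reduction avoiding them.

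**The main obstacle** will be step (4): patching the point-by-point unit estimate into a statement about the whole coset $\Phi_v(\Pic^1_v(C))$, and in particular handling the case where a locally rational divisor class of degree one cannot be represented by a point avoiding the bad locus mod $w$. The trick is to exploit that $\Phi$ is a homomorphism: the difference of any two degree-one classes is a degree-zero class, and on $\Pic^0_v(C) \cong E(k_v)$ the image of $\Phi_v$ is unramified by Lemma \ref{DescentOnE} together with the good-reduction fact for $E[p]$ ($v \nmid p$). Combined with exhibiting a single good degree-one class via a point with smooth reduction off the flexes — which exists after a suitable unramified base change because $\widetilde C$ is a positive-dimensional variety over an infinite field and the reduced flexes form a proper closed subset — this pins down the whole coset inside the unramified subgroup. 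One should also note that the hypothesis $v \nmid c$ guarantees (via Lemma \ref{candbeta}) that the norm condition $N_{F/k}(\delta) \equiv c$ doesn't introduce ramification at $v$, which is a consistency check but not the crux.
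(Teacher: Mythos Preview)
Your strategy is genuinely different from the paper's. The paper first reduces to the $F$-component by showing that the projection $\pr_1:\mathcal{H}_{k_v^{\unr}}\to F_{k_v^{\unr}}^\times/k_v^{\unr\times}F_{k_v^{\unr}}^{\times p}$ is injective (because $\mathcal{K}_{k_v^{\unr}}=0$ once all flexes are defined over $k_v^{\unr}$), and then handles \emph{every} point $P\in C(k_v)$ directly via the norm relation $\prod_x\tilde{\ell}_x = c\cdot g^p$: the sum $\sum_x\nu_w(\tilde{\ell}_x(P))$ is $\equiv 0\pmod p$ since $v\nmid c$, and at most one term is positive because distinct flexes have distinct reductions. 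So $v\nmid c$ is the crux of the paper's argument, not a consistency check. Your route via Lemma~\ref{DescentOnE} and the standard unramifiedness of $\delta_E$ at good primes is an elegant alternative that, once made to work, indeed renders the hypothesis $v\nmid c$ unnecessary.

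There is, however, a real gap in your steps (2)--(3). You write that ``$\tilde\ell$ splits over $F'$ into the $\tilde\ell_x$'' and conclude that $\tilde\ell(P)$ is a unit whenever $\widetilde P$ avoids the reduced flexes. But only $\tilde\ell_1$ splits as $(\tilde\ell_x)_{x\in X}$; the full $\tilde\ell$ has an $H_2$-component, and the paper's notion of ``good reduction for $\tilde\ell$'' constrains only $\tilde\ell_1$. So your argument as written only shows that $\pr_1(\Phi(P_0))$ is unramified. Even if every $\tilde\ell_y(P_0)$ were a unit, being a $p$-th power in $H_{k_v^{\unr}}^\times$ is not the same as lying in $k_v^{\unr\times}\partial F_{k_v^{\unr}}^\times$. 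To pass from the $F$-component back to all of $H$ you need exactly the paper's first step, the injectivity of $\pr_1$ on $\mathcal{H}_{k_v^{\unr}}$. With that ingredient added, your $\Pic^0$-plus-one-good-point argument does go through.
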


\begin{Proof}
Let $F'$ be the splitting field of $X$. By the criterion of Neron-Ogg-Shafarevich, the primes which ramify in the extension $F'/k$ are either primes of bad reduction for $C$ or lie above $p$. In particular, if $v$ is as in the statement, then it does not ramify in $F'$.

Now we claim that if $v$ does not ramify in $F'$, then for all $\Delta \in \mathcal{H}_v$ we have 
\[ \Delta \in \mathcal{H}_v \text{ is unramified } \Longleftrightarrow \pr_1(\Delta) \in F_v^\times/K_v^\times F_v^{\times p} \text{ is unramified.} \]
This follows from the fact that for these `good primes' the map \[ \mathcal{H}_{k_v^{\unr}} \to F^{\unr\times}/K^{\unr\times}F^{\unr\times p}\] induced by projection onto the first factor of $H \simeq F\times H_2$ is injective. To see the injectivity recall that the fibers of this map (see the diagrams in \ref{MainDiagram}) are parameterized by \[\mathcal{K}_v := \frac{\HH^0\bigl(k_v^{\unr},(\partial_2(\mu_p\bF))\bigr)}{\partial_2\left(\HH^0\left(k_v^{\unr},\frac{\mu_p(\bF)}{\mu_p}\right)\right)}\,.\] As $v$ does not ramify in $F'$, all flex points are defined over $k_v^\unr$. So the actions on the modules appearing here are trivial. Since $\mu_p \subset \ker\partial_2$, we have $\partial_2(\mu_p(\bF)) = \partial_2(\mu_p(\bF)/\mu_p)$. So the quotient is trivial.

To prove the lemma, it now suffices to show that the image of the composition \[ \Pic^1(C_v) \stackrel{\Phi_v}{\To} \mathcal{H}_v \stackrel{\pr_1}{\To} F_v^\times/k_v^\times F_v^{\times p}\] is unramified. For this it will be enough to show that this is true of any point $P \in C(k_v)$ which is neither a zero nor a pole of $\ell_1$. For this we can choose primitive integral coordinates for $P$ (i.e. homogeneous coordinates with valuations that are non-negative but not all positive) and consider $\tilde{\ell}_1(P) \in (F\otimes k_v)^\times$. The flex algebra $F\otimes k_v$ splits as a product of extensions of $k_v$. Since $v \nmid p$, in order that the image of $P$ be unramified it is sufficient that the valuation of $\tilde{\ell}_1(P)$ in each of these factors is a multiple of $p$.

Fix some factor $K_{\mathfrak{v}}$. For any prime $\mathfrak{w}$ of $F'$ extending $\mathfrak{v}$, we get an unramified tower of fields $k_v \subset K_{\mathfrak{v}} \subset F'_{\mathfrak{w}}$. Let $\nu_{\mathfrak{w}}$ be the normalized valuation on $F'_{\mathfrak{w}}$. Over $F'$, $\tilde{\ell}_1$ splits as $(\tilde{\ell}_x)_{x\in X}$ and, since the extensions are all unramified, it suffices to show that $\nu_\mathfrak{w}(\tilde{\ell}_x(P)) \equiv 0 \Mod p$ for each $x \in X$. 

For this we make use of the norm condition. Since $v \nmid c$, its valuation satisfies the congruence $\nu_{\mathfrak{w}}(c)\equiv 0 \Mod p$. Hence, \[ \sum_{x\in X} \nu_\mathfrak{w}(\tilde{\ell}_x(P)) = \nu_{\mathfrak{w}}\left(\prod_{x \in X} \tilde{\ell}_x(P) \right) \equiv \nu_{\mathfrak{w}}(c) \equiv 0 \Mod p\,.\] Each summand appearing on the left is nonnegative. To complete the proof it suffices to show that at most one can be positive.

Since $v$ is of good reduction for $\tilde{\ell}$, the reduction of each $\tilde{\ell}_x$ defines a hyperplane meeting $\tilde{C}$ only at the image $\tilde{x}$ of $x$ on $\tilde{C}$. So $\nu_{\mathfrak{w}}(\tilde{\ell}_x(P)) > 0$ if and only if $P$ and $x$ have the same image under the reduction map. On the other hand, the images of the flex points modulo $\mathfrak{w}$ are all distinct since $v$ is of good reduction for $C$ and is prime to $p$ (the images of these flex points are the flex points of the reduced curve). So $\nu_{\mathfrak{w}}(\tilde{\ell}_x(P))$ can be positive for at most one $x\in X$. This completes the proof.
\end{Proof}

\begin{Proposition}
\label{Selalg}
Let $S$ be the set of primes of $k$ containing all non-archimedean primes dividing $p$ or $c$, all primes of bad reduction for $C$ or $\tilde{\ell}$. Let $\mathcal{H}_S$ denote the subgroup of $\mathcal{H}_k$ consisting of elements that are unramified outside $S$. Then 
\[ \Sel_{alg}^{(p)}(C/k) = \{ \Delta \in \mathcal{H}_S\,:\,\res_v(\Delta) \in \Phi_v(\Pic^1(C_v)), \text{ for all $v \in S$ }\}\,.\]
\end{Proposition}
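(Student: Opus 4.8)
The plan is to obtain both inclusions by combining Lemma~\ref{Unramifiedv}, Lemma~\ref{defer}, and the classical description of the local image of the Kummer map at primes of good reduction away from $p$. For the easy inclusion $\subseteq$, I would start from an element $\Delta$ of the algebraic Selmer set. By Lemma~\ref{defer} we already have $\Delta\in\mathcal{H}_k$. For every $v\notin S$ the hypotheses of Lemma~\ref{Unramifiedv} are satisfied, so $\res_v(\Delta)\in\Phi_v(\Pic^1(C_v))$ is unramified at $v$; hence $\Delta$ is unramified outside $S$, i.e. $\Delta\in\mathcal{H}_S$, and the conditions at $v\in S$ are part of the very definition of $\Sel_{alg}^{(p)}(C/k)$.

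The reverse inclusion is the substantive part. Given $\Delta\in\mathcal{H}_S$ satisfying the stated local conditions for all $v\in S$, one must verify $\res_v(\Delta)\in\Phi_v(\Pic^1(C_v))$ for $v\notin S$ as well. Since $\Delta\in\mathcal{H}_S$, the class $\res_v(\Delta)\in\mathcal{H}_v$ is unramified at such $v$, so it is enough to show that for $v\notin S$ the unramified classes in $\mathcal{H}_v$ are exactly $\Phi_v(\Pic^1(C_v))$. Lemma~\ref{Unramifiedv} already gives one containment. For the other I would note that both sides are nonempty cosets of the same subgroup of $\mathcal{H}_v^0$, hence equal: local solvability gives $C(k_v)\ne\emptyset$ and $\Pic(C_v)=\Pic(\bC_v)^{G_{k_v}}$, so $\Phi_v(\Pic^1(C_v))$ is a coset of $\Phi_v(\Pic^0(C_v))$; and, since restriction to $k_v^{\unr}$ is a homomorphism, the unramified subset of $\mathcal{H}_v$ is a (nonempty, by Lemma~\ref{Unramifiedv} applied to a point of $C(k_v)$) coset of the unramified subgroup of $\mathcal{H}_v^0$. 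Everything then reduces to the claim that, for $v\notin S$, the subgroup $\Phi_v(\Pic^0(C_v))\subseteq\mathcal{H}_v^0$ coincides with the unramified subgroup of $\mathcal{H}_v^0$.

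To prove this last claim I would invoke Lemma~\ref{DescentOnE}, which identifies $\Phi_v|_{\Pic^0(C_v)}$, under the isomorphism $\mathcal{H}_v^0\simeq C^\perp\subset\HH^1(k_v,E[p])$ of Corollary~\ref{desc2}, with the connecting map $\delta_{E_v}\colon E(k_v)/pE(k_v)\hookrightarrow\HH^1(k_v,E[p])$. That isomorphism is natural in the base field, hence compatible with base change to $k_v^{\unr}$, so the unramified subgroup of $\mathcal{H}_v^0$ corresponds to $C^\perp\cap\HH^1_{\unr}(k_v,E[p])$. For $v\notin S$ the Jacobian $E$ has good reduction and $v\nmid p$, so $E(k_v^{\unr})$ is $p$-divisible and the standard count $\#E(k_v)/pE(k_v)=\#E(k_v)[p]=\#\HH^1_{\unr}(k_v,E[p])$ shows $\delta_{E_v}$ carries $E(k_v)/pE(k_v)$ isomorphically onto the full unramified subgroup $\HH^1_{\unr}(k_v,E[p])$. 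Since this image already lies in $C^\perp$ (it is the image of $\Phi_v(\Pic^0(C_v))\subseteq\mathcal{H}_v^0$ under Corollary~\ref{desc2}), we get $\HH^1_{\unr}(k_v,E[p])\subseteq C^\perp$, hence $C^\perp\cap\HH^1_{\unr}(k_v,E[p])=\HH^1_{\unr}(k_v,E[p])=\delta_{E_v}(E(k_v)/pE(k_v))$, which translates back to $\Phi_v(\Pic^0(C_v))$. The two cosets then agree and the proof is done.

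The step I expect to be the main obstacle is the bookkeeping around the word ``unramified'': making sure it is preserved along the chain of identifications $\mathcal{H}_v^0\simeq C^\perp\subset\HH^1(k_v,E[p])$ and in passing between the torsor $\mathcal{H}_v$ and the group $\mathcal{H}_v^0$ acting on it, together with checking that the conditions defining $S$ really force $E$ to have good reduction and $v\nmid p$ (and $v\nmid c$), so that the local Kummer-image computation is legitimate.
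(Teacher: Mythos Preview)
Your proposal is correct and follows essentially the same approach as the paper's proof. Both arguments use Lemma~\ref{Unramifiedv} for the inclusion $\Sel_{alg}^{(p)}(C/k)\subseteq\mathcal{H}_S$, and for the reverse inclusion both reduce to showing that $\Phi_v(\Pic^0(C_v))$ equals the unramified subgroup of $\mathcal{H}_v^0$ for $v\notin S$, via Lemma~\ref{DescentOnE} and the standard identification of the local Kummer image with $\HH^1_{\unr}(k_v,E[p])$ at primes of good reduction away from~$p$. The only cosmetic difference is that you phrase the final step as an equality of cosets, whereas the paper picks an explicit $P\in\Pic^1(C_v)$, forms the ratio $\Phi_v(P)\cdot\res_v(\Delta)^{-1}$ in the unramified part of $\mathcal{H}_v^0$, and writes it as $\Phi_v(Q)$ to get $\res_v(\Delta)=\Phi_v(P-Q)$; these are the same argument unwound.
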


\begin{Proof}
Let $Z$ denote the set in the statement. The previous lemma shows that $Z$ contains $\Sel_{alg}^{(p)}(C/k)$. To show that the reverse inclusion holds, we may assume that $Z$ is nonempty. Let $\Delta \in Z$ and $v \notin S$. To show that $\Delta \in \Sel_{alg}^{(p)}(C/k)$ we must show that $\res_v(\Delta) \in \Phi_v(\Pic^1(C_v))$. Choose any $P \in \Pic^1_v(C)$. Then both $\Phi_v(P)$ and $\res_v(\Delta)$ are unramified, so $\Phi_v(P)\cdot\res_v(\Delta)^{-1}$ is in the unramified subgroup of $\mathcal{H}_v^0$.

Since $v$ is a prime of good reduction for $C$, it is also a prime of good reduction for its Jacobian. For primes outside $S$, the image of the connecting homomorphism $\delta_v:E(k_v) \to \HH^1(k_v,E[p])$ is equal to the unramified subgroup. On the other hand, \ref{DescentOnE} says that $\Phi_v:\Pic^0(C_v) \to \mathcal{H}_v^0 \subset \HH^1(k_v,E[p])$ coincides with connecting homomorphism. It follows that $\Phi_v(\Pic^0(C_v))$ is equal to the unramified subgroup of $\mathcal{H}_v^0$. Hence there exists some $Q \in \Pic^0_v(C)$ such that $\Phi_v(Q) = \Phi_v(P)\res_v(\Delta)^{-1}$. Since $\Phi_v$ is a homomorphism we have $\res_v(\Delta) = \Phi_v(P - Q)$, which completes the proof since $P-Q \in \Pic^1_v(C)$.
\end{Proof}

\subsection{The Algorithm}
\label{Algorithm}
The theory above gives rise to the following algorithm for computing a set of representatives for the algebraic $p$-Selmer set of $C$. The output is a collection of elements in $H^\times$. Using the methods of Section \ref{InverseOfTheDescentMap}, these can then be turned quite easily into explicit models as genus one normal curves of degree $p^2$. Thus we have an algorithm for performing explicit second $p$-descents. For $p=3$ and $k = \Q$, our implementation of this algorithm has been contributed to \magma\cite{magma}. For practical applications it is also important to find `nice models' (e.g. one with small coefficients -- see \cite{MinRed} for details). With the help of Tom Fisher and Michael Stoll we have implemented some ad hoc methods. However, there is much room for both theoretical and practical improvement. 

For larger $p \ge 5$ the algorithm is currently impractical for two reasons. The first of these is the, largely unavoidable, computation of $S$-class and -unit group information in $F$. With the current state of the art, this becomes somewhat prohibitive already for $p = 5$. There is hope, however, that this will become feasible for larger $p$ in the near future as computing power and algorithms in algebraic number theory improve. The second arises from the fact that the degree of the algebra $H_2$ is simply too large. Generically it is a number field of degree $p^2(p^2-1)/2$ over $k$. The algorithm does not, however, require class and unit group information in $H_2$. The most expensive operations required are the extraction of $p$-th roots. Even so, this quickly becomes impractical.

{\bf Compute $\Sel_{alg}^{(p)}(C/k)$:}
\begin{enumerate}
\item Compute the algebras $F$ and $H_2$, the map $\partial_2:F\to H_2$, the linear form $\tilde{\ell}$, the constants $c \in k^\times$, $\beta \in H_2^\times$, and the set $S$ of bad primes.
\item Let $V_1 \subset F^\times$ be a (finite) set of representatives for the unramified outside $S$ subgroup of $F^\times/k^\times F^{\times p}$.
\item Let $V_2 = \{ \delta \in V_1\,:\,N_{F/k}(\delta) \equiv c \Mod \Q^{\times p} \}\,.$
\item For each $v \in S$, determine the local image $\Phi(\Pic^1(C_v)) \subset \mathcal{H}_v$.
\item Let $V_3 = \{ \delta \in V_2\,:\, \forall\,v\in S,\,\res_v(\delta) \in \pr_1(\Phi(\Pic^1(C_v)))\,\}\,.$
\item Let $V_4$ be the set of $(\delta,\eps) \in F^\times\times H_2^\times$ such that $\delta \in V_3$ and  $\eps \in H_2^\times$ is a $p$-th root of $\partial_2(\delta)/\beta$, modulo the equivalence $(\delta,\eps)\sim(\delta,\eps')\,\Leftrightarrow\,\eps/\eps' \in \partial_2((\mu_p(\bF)/\mu_p)^{G_k})$.
\item Let $V_5 = \{ (\delta,\eps) \in V_4\,:\, \forall\,v\in S,\,\res_v(\delta,\eps) \in \Phi(\Pic^1(C_v))\,\}\,.$
\item return $V_5$.
\end{enumerate}

\begin{Remark}
The reason for including steps (3) and (5) is to reduce the size of $V_1$ as much as possible before proceeding to step (6) where one has to extract $p$-th roots.
\end{Remark}

Let us prove that the algorithm returns a set of representatives for the algebraic Selmer set. The equivalence in step (6) is included to ensure that the $(\delta,\eps) \in V_5$ represent distinct classes modulo $k^\times\partial F^\times$. In the proof of Lemma \ref{Unramifiedv} we have seen that, for primes not in $S$, a class in $H^\times/k^\times \partial F^\times$ is unramified if and only if its image in $F^\times/k^\times F^{\times p}$ is unramified. Moreover the elements of $V_5$ are in $\tilde{\mathcal{H}}_k$ by Lemma \ref{defer}, since they restrict to $\tilde{\mathcal{H}}_v$ for some $v \in S \ne \emptyset$. It follows that $V_5$ is a set of representatives for $\{ \Delta \in \mathcal{H}_S\,:\, \res_v(\Delta) \in \Pic^1(C_v), \forall\,v\in S\,\}$, which is equal to $\Sel_{alg}^{(p)}(C/k)$ by Proposition \ref{Selalg}.\\

We now describe each step in more detail.
\subsubsection*{Step 1} This is straightforward.

\subsubsection*{Step 2} This is the bottleneck in the computation. Let $\mathcal{F}_S$ denote the unramified outside $S$ subgroup of $F^\times/k^\times F^{\times p}$. It fits into an exact sequence
\[ k(S,p) \to F(S,p) \to \mathcal{F}_S \to \frac{\Cl(\mathcal{O}_{k,S})}{p\Cl(\mathcal{O}_{k,S})} \to \frac{\Cl(\mathcal{O}_{F,S})}{p\Cl(\mathcal{O}_{F,S})}\,.\] For a derivation of this sequence and a description of how to compute $\mathcal{F}_S$ see \cite[12.8]{PoonenSchaefer}. 

\subsubsection*{Step 3} Since we have already computed $k(S,p)$, this can be accomplished very quickly using linear algebra over $\F_p$.

\subsubsection*{Step 4} Using Hensel's Lemma it is relatively easy to show that $\mathcal{H}_v$ is finite and that the maps $\Phi_v:\Pic^1_v(C) \to \mathcal{H}_v$ are locally constant. Moreover, one can determine the size of the image by considering the factorization of the $p$-division polynomial of the Jacobian over $k_v$ (see for example \cite[Propsition 3.8]{Schaefer} or \cite[Proposition 2.4]{SchJAC}).

To compute the local image it thus suffices to find the images of sufficiently many independent points.  It is actually easier to determine independence by considering the images in $\mathcal{H}_v$. This is valid since the descent map is injective. Moreover, since the descent map is affine it suffices to find a set of images in $\mathcal{H}_v$ which span a space of the appropriate dimension. In practice we simply compute the images of random points until their images generate a large enough space.

\subsubsection*{Step 5} Having computed $\mathcal{F}_S$ and $F_v^\times/k_v^\times F_v^{\times p}$ this can be accomplished using linear algebra over $\F_p$.

\subsubsection*{Step 6} Extracting the $p$-th roots is straightforward (if a bit costly - it is here that the degree of $H_2$ becomes a problem). By `modulo the equivalence...' we mean that we keep one $(\delta,\eps)$ in each equivalence class. To determine equivalence, one needs to determine $(\mu_p(\bF)/\mu_p)^{G_k}$ and its image under $\partial_2$.

\subsubsection*{Step 7} This is accomplished as in step (5).

\section{Examples}
\label{Examples}

\subsection{An example with $\Sha[3^\infty] = \Z/9\Z\times \Z/9\Z$.}

As a first example, let us consider the smallest elliptic curve over $\Q$ (ordered by conductor) with analytic order of $\Sha$ divisible by $81$. This is the curve \[ E : y^2 + y = x^3 - x^2 - 14556197783x - 675953651051907\] of conductor $5075$ (labelled {\tt 5075d3} in Cremona's database \cite{CremonaDB}). One can show that $E(\Q) = 0$, using either a $2$-descent or analytic means to determine the rank.

A $3$-descent on $E$ produces four plane cubic curves, each of which represents an inverse pair of nontrivial elements in $\Sel^{(3)}(E/\Q)$, so \[\Sel^{(3)}(E/\Q) \simeq \Sha(E/\Q)[3] \simeq \Z/3\Z\times\Z/3\Z\,,\] and each cubic is a counter-example to the Hasse principle. Since the analytic order of $\Sha(E/\Q)$ is $81$, we expect that $\#\Sel^{(3)}(C/\Q)=9$ for each cubic $C$. This is confirmed by performing a second $3$-descent. Note that since the order of $3\Sha(E/\Q)[9]/\Sha(E/\Q)[3]$ is a square it suffices to do the computation for a single cubic. Each of the $9$ elements computed in $\Sel_{alg}^{(3)}(C/\Q)$ correspond to a pair of inverse elements of order $9$ in $\Sel^{(9)}(E/\Q) \simeq \Sha(E/\Q)[9] \simeq \Z/9\Z \times \Z/9\Z$.

Alternatively, we may appeal to the isogeny invariance of the Birch and Swinnerton-Dyer conjecture (see \cite{CasselsVIII}). Namely the conjecture either correctly predicts $\ord_3(\Sha(E/\Q))$ for every curve in the isogeny class, or for none of them. In this example the other two curves in the isogeny class, {\tt 5075d1} and {\tt 5075d2}, are predicted to have $\Sha$ of order $1$ and $9$, respectively. This may be verified by first and second $3$-descents, and shows in addition that there are no elements of order $27$ in $\Sha(E/\Q)$.\\

\begin{Remark}
There is a degree $9$ isogeny between $E$ and {\tt 5075d1}. As the anonymous referee astutely noted, this gives a more direct proof of the fact that there are no higher order elements in $\Sha$. Namely multiplication by $9$ factors through the Shafarevich-Tate group of {\tt 5075d1} which is trivial.
\end{Remark}

\subsection{An example with irreducible mod $3$ representation}
The first elliptic curve over $\Q$ (ordered by conductor) with irreducible mod $3$ representation and analytic order of $\Sha$ divisible by $81$ has Cremona reference {\tt 15675f1} and minimal Weierstrass model \[ E : y^2 + y = x^3 - x^2 - 9002708x - 10393995307\,. \] A second $3$-descent shows that the $\Sha[9] \simeq \Z/9\Z\times\Z/9\Z$. Using the method described in Section \ref{InverseOfTheDescentMap} we can produce models for the elements of order $9$ as genus one normal curves of degree $9$ in $\PP^8$. In the appendix we give $27$ symmetric matrices which corresponding to $27$ quadratic forms which give such a model. The curve defined by the vanishing of the quadratic forms is everywhere locally solvable, yet has no rational points over any number field of degree indivisible by $9$. To our knowledge this is the largest prime power to date for which such an example has been produced\footnote{One can produce examples of order $12$ by combining examples of orders $3$ and $4$ using the method of \cite{Fisher6and12}.}.

\subsection{An example with $\Sha = \Z/12\Z\times \Z/12\Z$}
As a final example we offer the following theorem which, in addition to some very deep results in the direction of the Birch and Swinnerton-Dyer conjecture, brings to bear many of the currently available algorithms for explicit descents on curves of genus one.

\begin{Theorem}
The full Birch and Swinnerton-Dyer conjecture holds for the elliptic curves
\begin{align*}
E : y^2 &= x^3 + 7^3 \cdot 61^3\cdot 97^4\,,\text{ and}\\
E' : y^2 &=x^3 - 3^3\cdot 7^3 \cdot 61^3\cdot 97^4
\end{align*} defined over $\Q$. In particular, $\Sha(E/\Q)\,\simeq\Sha(E'/\Q) \simeq \Z/12\Z\times \Z/12\Z$.
\end{Theorem}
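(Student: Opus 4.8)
The plan is to verify the three ingredients of the full Birch and Swinnerton-Dyer conjecture for $E$ and $E'$: first, the analytic rank is $0$ (so the Mordell--Weil rank is $0$ and $\Sha$ is finite); second, the $p$-primary part of $\Sha$ agrees with the $p$-part of the conjectural order for every prime $p$; and third, assembling these gives the claimed structure $\Sha \simeq \Z/12\Z \times \Z/12\Z$. Both curves have $j$-invariant $0$, hence complex multiplication by $\Z[\zeta_3]$, and are (cubic) twists of one another. The CM hypothesis is what makes the analytic and $p$-adic input available: by Coates--Wiles (or Rubin's work on the main conjecture for CM elliptic curves), $L(E,1) \ne 0$ implies $E(\Q)$ is finite and $\Sha(E/\Q)$ is finite, and Rubin's theorem gives the precise $p$-part of the BSD formula for primes $p$ where the relevant Iwasawa-theoretic hypotheses hold (in particular all $p \nmid 6$, and with more care $p = 2, 3$ as well since here one can check these directly).

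\textbf{Step 1: analytic rank and finiteness.} Compute $L(E,1)$ and $L(E',1)$ numerically and confirm both are nonzero; since the curves have CM this can be made rigorous (the $L$-value is, up to an explicit period and rational factor, computable exactly via Eisenstein--Kronecker or Hecke $L$-function techniques). By Coates--Wiles this forces $E(\Q)$ and $E'(\Q)$ finite; inspection of the torsion (for $y^2 = x^3 + k$ with $k$ not a perfect square or cube the torsion is typically trivial, which one checks directly from the possible torsion structures) gives $E(\Q) = E'(\Q) = 0$. Then the weak BSD formula predicts $\#\Sha(E/\Q) = 144$ and likewise for $E'$, using the computed $L$-value, the real period, the Tamagawa numbers at the bad primes $2, 3, 7, 61, 97$, and the (trivial) torsion.

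\textbf{Step 2: descents to pin down $\Sha$.} The conjectural order $144 = 2^4 \cdot 3^2$ must be matched. For the $2$-part: a full $2$-descent, then (if needed) a second $2$-descent via \cite{BruinStoll, Stamminger} on the relevant $2$-coverings, shows $\Sha(E/\Q)[2^\infty] \simeq \Z/4\Z \times \Z/4\Z$ (consistent with a square order $16$ and no elements of order $8$, which the second descent rules out). For the $3$-part: perform a $3$-descent on $E$ and $E'$, then the \emph{second $3$-descent} developed in this paper on the resulting plane cubics to show $\Sha[3^\infty] \simeq \Z/3\Z \times \Z/3\Z$ and, crucially, that there are no elements of order $9$ — exactly the kind of computation the paper's algorithm is designed for. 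One exploits the $3$-isogeny coming from CM (multiplication by $\sqrt{-3}$) to reduce work and to bound $\Sha[3^\infty]$ from above. Combining, $\Sha(E/\Q)$ has order $16 \cdot 9 = 144$ with the stated group structure, verifying strong BSD for $E$; the twist $E'$ is handled identically (or deduced from isogeny/twist-invariance of BSD together with its own descents).

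\textbf{Main obstacle.} The hardest part is making Step 1 fully rigorous and, more seriously, completing the $p$-part of the BSD formula for the primes $p = 2$ and $p = 3$ where general CM results (Rubin) may require auxiliary hypotheses. Here one leans on the explicit descents: the second $3$-descent of this paper supplies the upper bound $\#\Sha[3^\infty] \le 9$ needed to complement Rubin's lower bound (or Kolyvagin-type arguments), and analogous $2$-descent results handle $p = 2$. The numerics — computing $L(E,1)$ to sufficient precision to conclude it is nonzero and to identify the rational part of the BSD ratio exactly — are routine given CM but must be done carefully; the conceptual content is entirely in knowing that the descent computations close the gap between the analytic prediction and the Selmer-group upper bounds.
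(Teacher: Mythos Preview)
Your proposal follows essentially the same route as the paper: Coates--Wiles for finiteness, Rubin's CM result to kill $\Sha[p]$ for $p\nmid 6$, then explicit $2$- and $3$-descents (including the paper's second $3$-descent) to pin down the $2$- and $3$-primary parts. Two small corrections are worth making. First, $E$ and $E'$ are related by a $3$-isogeny (the classical isogeny between $y^2=x^3+a$ and $y^2=x^3-27a$), not a cubic twist; you do invoke this isogeny later, so this is only a terminological slip, but the paper leans on it explicitly via $3$-isogeny descent to compute $\Sha[3]$. Second, a second $2$-descent only computes $\Sha[4]$; to rule out elements of order $8$ one needs a \emph{third} $2$-descent, which is exactly what the paper does using Stamminger's method (your citation of \cite{Stamminger} suggests you have this in mind, but the sentence ``which the second descent rules out'' is not right). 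Finally, Rubin's theorem is used here purely as an upper bound---it shows $\Sha[p]=0$ for $p\nmid 6$---rather than as a lower bound to be complemented by descent; the descents at $2$ and $3$ stand on their own and determine $\Sha[2^\infty]$ and $\Sha[3^\infty]$ exactly.
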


The hard part of the proof is taken care of by the existing partial results in the direction of BSD. The role of descent is to compute the $p$-primary parts of the Shafarevich-Tate groups at the primes $2$ and $3$. Note that these curves are related by the $3$-isogeny
\[ h : E \ni (a,b) \mapsto \left(\frac{a^3+2^27^3  61^3 97^4}{a^2},\frac{a^3b - 2^37^3  61^3 97^4b}{a^3}\right) \in E'\,.\] So the validity of BSD for either curve implies its validity for the other.

One can check that the values of the $L$-series of $E$ and $E'$ at $s=1$ are (equal and) approximately $5.5542$. Results of Coates and Wiles then imply that the Mordell-Weil groups are finite \cite{CoatesWiles}. One easily checks that there is no nontrivial torsion on either, so the Mordell-Weil groups are trivial. The predicted orders of $\Sha(E/\Q)$ and $\Sha(E'/\Q)$ are the numbers 
\begin{align*}
\Sha_{an}(E) &= \frac{L_E(1)}{\Omega(E) \cdot \prod_{p | \Delta(E)} C_p(E)}\,\text{  and}\\
\Sha_{an}(E') &= \frac{L_{E'}(1)}{\Omega(E') \cdot \prod_{p | \Delta(E')} C_p(E')}\,,
\end{align*}
where $L_\mathcal{E}(s)$ is the Hasse-Weil $L$-function associated to $\mathcal{E}$, $\Omega(\mathcal{E})$ is the real period, $C_p(\mathcal{E})$ denotes the Tamagawa number of $\mathcal{E}$ at $p$ and $\Delta(\mathcal{E})$ is the discriminant (note that the regulators and torsion subgroups are trivial for both curves). The real period of $E$ is $\Omega(E)\approx 0.0096427$ and the only Tamagawa number not equal to $1$ is $C_7(E)=4$. This gives 
\[ \Sha_{an}(E) \approx \frac{5.5542}{(0.0096427)\cdot4} \approx 144 \]
The real period of $E'$ satisfies $\Omega(E) = 3\cdot\Omega(E')$ and the nontrivial Tamagawa numbers are $C_3(E')=3$ and $C_7(E')=4$. Thus $\Sha_{an}(E')\approx 144$ as well.

It is known that $\Sha_{an}(E)$ and $\Sha_{an}(E')$ are rational numbers of (explicitly) bounded denominator, so taking the computations to sufficiently high precision we conclude that they are in fact equal to $144$. Rubin's result \cite{Rubin91} then implies that $\Sha(E/\Q)[p] = 0$ for all primes $p$ not dividing $\Sha_{an}(E)$ or the order of the group of units in the ring of integers of the field of complex multiplication. The same holds for $\Sha(E'/\Q)$. These curves have CM by $\sqrt{-3}$, so we conclude that both Shafarevich-Tate groups are annihilated by some power of $6$.

After applying these deep results, we are left only with the task of computing the $2$- and $3$-primary parts of $\Sha(E/\Q)$ and $\Sha(E'/\Q)$. Since the validity of BSD for a given elliptic curve is actually a property of its isogeny class, it will suffice to perform the computations for either curve. We describe the computations for $E$. The computations for $E'$ are similar (and equally feasible).

\subsubsection*{The $2$-primary part} One needs explicit first and second $2$-descents to produce models for the elements of order dividing $4$ and then a third $2$-descent to show that there are no elements of order $8$. The $2$-descent on $E$ yields models for the $3$ nontrivial elements of $\Sha(E/\Q)[2]$ as double covers of $\PP^1$:
\begin{align*}
C_1: u_3^2 &= 130174u_1^4 - 71004u_1^3 - 426024u_1^2 + 2011780u_1 - 390522\,,\\
C_2: u_3^2 &= 11834u_1^4 + 260348u_1^3 - 710040u_1^2 + 1372744u_1 + 3999892\,,\\
C_3: u_3^2 &= 5917u_1^4 + 29585u_1^3 - 177510u_1^2 + 804712u_1 + 562115\,.
\end{align*} For each $C_i$ a second $2$-descent will produce a pair of quadric intersections, each representing a pair of inverse elements of order $4$ in $\Sha(E/\Q)$. For example, $\Sel^{(2)}(C_3/\Q)$ is of order $4$ and represented by the two curves (for each their are two choices for the covering map) {\small
\begin{align*}
D_1 &= \left\{
    \begin{array}{ccc}
    2z_1^2 + 14z_1z_2 - 3z_2^2 + 4z_1z_3 - 2z_2z_3 + 5z_3^2 + 8z_1z_4 +
        2z_2z_4 - 8z_3z_4 - 15z_4^2 = 0\\
    24z_1^2 + 8z_1z_2 - 22z_2^2 + 36z_1z_3 + 18z_2z_3 + 63z_3^2 - 54z_1z_4 -
        24z_2z_4 + 42z_3z_4 + 14z_4^2 = 0
\end{array}
  \right\} \subset \PP^3\,,\\
D_2 &= \left\{
    \begin{array}{ccc}
    3z_1^2 + 2z_1z_2 + 3z_2^2 + 6z_1z_3 - 2z_2z_3 - 8z_3^2 + 6z_1z_4 +
        24z_2z_4 - 13z_4^2 =0\\
    6z_1^2 + 86z_1z_2 - 20z_2^2 - 18z_1z_3 + 2z_2z_3 + 13z_3^2 - 18z_1z_4 -
        22z_2z_4 - 6z_3z_4 - 42z_4^2 = 0
\end{array}
  \right\} \subset \PP^3\,.
\end{align*}} 
One then uses Stamminger's method for third $2$-descent which shows that none of the elements of order $4$ lift to elements of order $8$. It follows that the $2$-primary part is $\Sha(E/\Q)[2^\infty]\simeq\Z/4\Z\times\Z/4\Z$.

\subsubsection*{The $3$-primary part}
For this we can make use of the $3$-isogeny. A $3$-isogeny descent (as described in \cite{SchaeferStoll}) computes that $\Sel^{(h)}(E/\Q) \simeq \Sel^{(h')}(E'/\Q) \simeq \Z/3\Z$. Since the Mordell-Weil groups are trivial these Selmer groups are isomorphic to the corresponding torsion subgroups of the Shafarevich-Tate groups. The exact sequence 
\[ \text{\small $0 \to \frac{E'(\Q)[h']}{h(E(\Q)[3])} \to \Sel^{(h)}(E/\Q) \to \Sel^{(3)}(E/\Q) \stackrel{h}{\to} \Sel^{(h')}(E'/\Q) \to \frac{\Sha(E'/\Q)[h']}{h\left(\Sha(E/\Q)[3]\right)} \to 0$ } \] reduces to
\begin{align}
\label{SelIsogSeq}
 0 \to \Sel^{(h)}(E/\Q) \to \Sel^{(3)}(E/\Q) \to \Sel^{(h')}(E'/\Q) \to 0\,,
\end{align} which splits since $\Sel^{(3)}(E/\Q)$ is $3$-torsion. We conclude that $\Sha(E/\Q)[3] \simeq \Z/3\Z\times\Z/3\Z$.

The $3$-isogeny descent (implemented in \magma) is also explicit in that it produces the projective plane cubic
\[ C : u_1^3 + 4u_2^3 + 4017643u_3^3 = 4u_1^2u_2 + 3u_1u_2^2 \] representing the pair of nontrivial elements in $\Sel^{(h)}(E/\Q)$. This also represents an inverse pair of nontrivial elements in $\Sel^{(3)}(E/\Q)$. In order to show that $\Sha(E/\Q)[3^\infty] \simeq \Z/3\Z\times\Z/3\Z$ it will be enough to show that $\Sel^{(3)}(C/\Q) = \emptyset$ (since $\Sha(E/\Q)[3^\infty]$ is finite, its order is a square).

For this we do a second $3$-descent. The reducibility of $E[3]$ translates into a splitting of the flex algebra. We find that $F$ is isomorphic to the product of the cubic and sextic number fields with defining polynomials
\begin{align*}
f(t) &= t^3 - 4t^2 - 3t + 4\text{ and}\\
f(t) &= t^6 + 2408704t^3 + 5533080062500
\end{align*}
and the set of bad brimes is $S = \{ 2,3,5,7,61,97\}$. Despite the splitting, computation of $F(S,3)$ takes a couple hours of processor time. Having accomplished that however the remaining computations are very fast. In fact, the computation can be completed without ever using $H_2$. The image of the $3$-Selmer set under $\pr_1$ is contained in the set of all $\delta \in F^\times/\Q^\times F^{\times 3}$ such that
\begin{enumerate}
\item $\delta$ is unramified outside $S$,
\item $N_{F/\Q}(\delta) \equiv 7^261^297 \Mod \Q^{\times 3}$ and  
\item $\forall\,v\in S, \res_v(\delta) \in \pr_1\Phi(C(\Q_v))$\,.
\end{enumerate}  which turns out to be empty. It follows that the $3$-Selmer set of $C$ is empty and thus that the $3$-primary part of $\Sha(E/\Q)$ is isomorphic to a product of two cyclic groups of order $3$.\\

\vfill 
\pagebreak
\section{An element of order $9$ in $\Sha$ for the curve \text{\tt 15675f1}}
{\tiny
\begin{align*}
&\begin{bmatrix}
0& 0& 0&-1& 1& 1& 0&-1& 1\\
& 0&-1&-1& 0& 0& 0& 1& 1\\
&& 0& 0& 1& 0& 0&-2& 1\\
&&& 2&-1&-1&-1& 3& 0\\
&&&& 2& 2&-1& 2& 1\\
&&&&& 0&-1&-1& 2\\
&&&&&&-2&-2&-1\\
&&&&&&& 4& 0\\
&&&&&&&& 2\\
\end{bmatrix}
&&\begin{bmatrix}
 0& 1& 0& 0& 0& 0& 1&-3& 1\\
& 0&-1& 1& 1& 1& 1&-1& 0\\
&&-2& 2&-1& 1& 1& 0& 0\\
&&& 2& 0& 0& 1&-3&-1\\
&&&& 2& 1& 1&-1& 1\\
&&&&& 2& 0& 0& 2\\
&&&&&& 2& 2& 2\\
&&&&&&&-2&-1\\
&&&&&&&& 2\\
\end{bmatrix}\\
&\begin{bmatrix}
 0& 0& 1& 1& 0&-1& 1& 3&-2\\
& 2& 1& 1& 1& 0& 0& 0&-1\\
&& 0& 0& 0&-2& 1&-1&-1\\
&&& 0& 0& 0& 0& 1& 1\\
&&&& 0&-2& 0& 2&-1\\
&&&&&-4&-1& 0& 2\\
&&&&&&-2& 0& 0\\
&&&&&&& 2& 1\\
&&&&&&&& 4\\
\end{bmatrix}
&&\begin{bmatrix}
 0& 0& 1& 0& 1& 1& 0& 1& 0\\
& 0&-1& 2& 0& 1& 1& 0& 0\\
&& 0& 2&-2&-1&-1& 2& 2\\
&&& 2& 1& 0& 1& 2&-2\\
&&&& 2& 1& 0& 0& 1\\
&&&&& 0&-1& 1& 2\\
&&&&&& 2& 0& 1\\
&&&&&&& 2& 3\\
&&&&&&&& 2\\
\end{bmatrix}\\
&\begin{bmatrix}
 0& 0& 0& 0& 0& 0& 2&-3& 0\\
 & 0&-1& 0& 0& 0& 0&-2&-1\\
 && 0& 1& 0& 0& 2&-1& 1\\
 &&& 0& 1& 1& 1& 0& 0\\
 &&&& 0&-1& 1&-1& 1\\
 &&&&&-2& 1& 4& 2\\
 &&&&&& 2&-3&-1\\
 &&&&&&& 0&-1\\
 &&&&&&&& 0\\
\end{bmatrix}
&&\begin{bmatrix}
 0& 1& 0& 0& 1& 0& 0& 1&-1\\
 & 0& 1& 1& 1& 2& 1& 0&-1\\
 &&-2&-1& 1& 0& 0&-3&-1\\
 &&& 0&-1& 0&-2& 2& 2\\
 &&&& 4& 3& 1& 0&-1\\
 &&&&& 0& 0& 0& 0\\
 &&&&&& 0&-3&-1\\
 &&&&&&& 0& 1\\
 &&&&&&&& 0\\
\end{bmatrix}\\
&\begin{bmatrix}
 0& 1& 1& 0& 1& 0& 0& 0& 0\\
 & 0& 0& 2& 1& 2& 1& 0& 0\\
 &&-2& 1& 0& 1& 0& 3& 1\\
 &&& 2&-1&-1&-1&-2& 0\\
 &&&& 4& 2& 1& 1& 1\\
 &&&&& 0&-1& 2& 1\\
 &&&&&& 4&-2& 0\\
 &&&&&&& 0&-1\\
 &&&&&&&& 0\\
\end{bmatrix}
&&\begin{bmatrix}
 0& 0& 0& 0& 1& 1& 1&-1&-1\\
 & 0& 0& 1& 0& 1& 2&-2&-1\\
 && 0& 1&-2&-2&-1& 1& 0\\
 &&&-2& 2& 2& 0& 1& 0\\
 &&&& 2& 1& 0&-2&-1\\
 &&&&& 0&-1& 1& 1\\
 &&&&&& 0& 0& 4\\
 &&&&&&& 0&-2\\
 &&&&&&&& 2\\
\end{bmatrix}\\
&\begin{bmatrix}
 0& 0& 1& 0& 1& 0& 0& 0&-1\\
 & 0& 1& 0& 0& 0& 0& 1& 1\\
 &&-2& 0& 1&-1& 1&-1& 0\\
 &&& 2& 0& 1&-1&-1& 1\\
 &&&& 2& 2&-1& 0&-1\\
 &&&&& 0&-1&-3&-3\\
 &&&&&&-2& 0& 0\\
 &&&&&&& 6&  3\\
 &&&&&&&&-2\\
\end{bmatrix}
&&\begin{bmatrix}
 0& 0& 0& 1& 0& 1& 0& 0&-1\\
 & 0& 1&-1& 1& 0& 0& 3& 1\\
 && 0& 0& 0& 0& 1& 1& 1\\
 &&& 2&-1&-1& 0& 1& 0\\
 &&&& 2& 2& 1& 1& 1\\
 &&&&& 4& 0&-2& 2\\
 &&&&&& 2& 1&-1\\
 &&&&&&&-2&-2\\
 &&&&&&&&-6\\
\end{bmatrix}\\
&\begin{bmatrix}
 0& 1& 1&-1&-1&-1& 0& 1& 1\\
 & 0& 0& 3& 0& 1& 0&-1&-1\\
 && 2& 0&-1& 1& 0&-1& 1\\
 &&&-2& 0&-1& 1&-3& 2\\
 &&&&-2&-2& 0& 1& 1\\
 &&&&&-2& 1& 1& 2\\
 &&&&&&-2& 2& 3\\
 &&&&&&&-2& 3\\
 &&&&&&&& 0\\
    \end{bmatrix}
&&\begin{bmatrix}
 2& 0& 2& 0& 0& 0& 1& 0& 2\\
 & 0& 1& 0& 0& 0& 0& 1& 1\\
 && 6 & 1& 3& 2& 1& 3& 1\\
 &&& 0&-1&-2&-1& 1& 1\\
 &&&& 0& 0& 0& 2& 1\\
 &&&&&-2& 1& 1& 0\\
 &&&&&& 0& 0& 0\\
 &&&&&&& 0&-1\\
 &&&&&&&& 2\\
    \end{bmatrix}\\
&\begin{bmatrix}
 0& 0& 0& 0& 1& 0& 1& 0&-2\\
 & 0& 2& 0& 1& 2& 0& 0&-2\\
 && 2&-1&-2&-2& 1& 1& 0\\
 &&&-2& 1& 1& 1&-3& 2\\
 &&&& 2&-1& 0& 2& 0\\
 &&&&&-4& 0& 1& 0\\
 &&&&&& 0&-1& 3\\
 &&&&&&& 0& 0\\
 &&&&&&&& 2\\
    \end{bmatrix}
&&\begin{bmatrix}
 0& 0& 2& 0& 0& 1&-1& 0&-1\\
 & 0& 1& 1&-1& 0& 0&-1& 1\\
 && 0& 0&-1& 2& 2&-2& 1\\
 &&& 4& 1& 2& 0&-3& 1\\
 &&&&-4&-2&-1& 0& 2\\
 &&&&& 0& 1&-1&-2\\
 &&&&&& 0& 1& 1\\
 &&&&&&& 0& 2\\
 &&&&&&&& 2\\
    \end{bmatrix}\\
 \end{align*} 
 \vfill
 \pagebreak
 \begin{align*}
&\begin{bmatrix}
 2& 0&-1& 2& 0& 1& 1& 1& 0\\
 & 0& 0& 0& 0& 0& 0& 1& 0\\
 && 2& 0& 0& 0&-1&-3& 2\\
 &&&-2& 0&-3& 2& 5&1\\
 &&&& 0&-1&-1& 2&-1\\
 &&&&&-2&-1& 0& 2\\
 &&&&&& 0&-1& 0\\
 &&&&&&&-2& 0\\
 &&&&&&&& 0\\
    \end{bmatrix}
&&\begin{bmatrix}
 0& 1& 0&-1&-1&-2& 1&-1& 3\\
 & 0& 0& 1& 1& 0& 0& 0& 0\\
 && 0& 0& 1&-1&-1& 2& 1\\
 &&&-4& 1&-1& 0&-4&-2\\
 &&&& 0& 1& 1&-1& 1\\
 &&&&& 4& 0& 0& 3\\
 &&&&&& 2& 1& 1\\
 &&&&&&&-2& 0\\
 &&&&&&&&-2\\
    \end{bmatrix}\\
&\begin{bmatrix}
 0& 0& 1&-1& 0& 1&-1& 0& 0\\
 & 0& 0& 1&-1& 1& 2&-1& 1\\
 && 2& 0& 0& 1&-1& 3& 3\\
 &&& 0&-2&-5&0& 2& 0\\
 &&&& 0& 2&-2& 1& 1\\
 &&&&& 0& 0&-1& 0\\
 &&&&&&-2& 0& 1\\
 &&&&&&& 2&-1\\
 &&&&&&&& 2\\
\end{bmatrix}
&&\begin{bmatrix}
-2& 1& 0&-2& 1& 0& 1&-1&-1\\
 & 0&-2& 2& 1& 1& 1& 1& 1\\
 &&-2& 1&-1&-2& 0&-1&-2\\
 &&&-2& 2& 3& 0&-1&-2\\
 &&&& 4& 3& 2&-1&-1\\
 &&&&& 2& 0&-1& 1\\
 &&&&&& 0& 1& 1\\
 &&&&&&&-2&-1\\
 &&&&&&&&-2\\
    \end{bmatrix}\\
&\begin{bmatrix}
 0& 0& 0& 1& 0&-1& 1&-1& 1\\
 & 0&-1& 1& 0& 0& 1& 0& 1\\
 &&-2& 1& 1&-1& 1& 3& 0\\
 &&& 2&-1&-3& 0& 2& 1\\
 &&&& 4& 3&-1&-1&-2\\
 &&&&& 2&-2& 1&-3\\
 &&&&&& 2& 1& 2\\
 &&&&&&& 0&-2\\
 &&&&&&&& 0\\
    \end{bmatrix}
&&\begin{bmatrix}
 0& 0& 1& 0&-1&-1& 0& 1& 2\\
 & 0&-1& 0& 0&-2& 0& 0& 1\\
 &&-2& 2& 1& 1& 1& 3& 0\\
 &&& 2&-2& 2&-2&-3& 0\\
 &&&& 0&-2& 2&-1&-1\\
 &&&&&-4& 0& 2&-1\\
 &&&&&& 0& 0&-3\\
 &&&&&&& 2& 1\\
 &&&&&&&& 2\\
    \end{bmatrix}\\
&\begin{bmatrix}
 2& 1& 0& 1& 0& 0& 0&-1& 3\\
 & 0& 0& 1& 0& 0& 1& 0& 2\\
 && 0& 1& 1& 2&-1& 1& 2\\
 &&& 0&-1&-4& 0& 0& 0\\
 &&&& 0& 1&-1& 1& 4\\
 &&&&& 2&-1&-2& 2\\
 &&&&&& 4& 0& 1\\
 &&&&&&& 4& 1\\
 &&&&&&&&-2\\
    \end{bmatrix}
&&\begin{bmatrix}
 2& 1& 1& 0& 2& 2&-1&-1& 0\\
 & 0& 2& 0& 0& 1& 0& 1& 3\\
 && 0& 0& 1& 2&-1& 0& 2\\
 &&& 0&-1& 0&-1&-2& 2\\
 &&&& 0& 1&-2& 2& 4\\
 &&&&& 0&-2&-1& 1\\
 &&&&&& 0& 0&-1\\
 &&&&&&& 2& 0\\
 &&&&&&&&-6\\
    \end{bmatrix}\\
&\begin{bmatrix}
 2& 1&-1& 2& 1& 0& 1&-2& 2\\
 & 0& 1&-1& 1& 0& 0& 1& 1\\
 &&-6&  2& 2& 0&-2&-1&-1\\
 &&& 0& 1& 3&-2&-1&-2\\
 &&&& 2& 3& 1&-2& 0\\
 &&&&& 4& 0&-2& 0\\
 &&&&&& 0& 1& 0\\
 &&&&&&& 0& 0\\
 &&&&&&&& 0\\
    \end{bmatrix}
&& \begin{bmatrix}
 2&-1& 0& 2& 1& 1& 1& 0& 2\\
 & 0& 0&-1& 0&-1&-1& 1& 1\\
 && 0& 2&-1&-5&0& 1& 1\\
 &&& 0& 4& 2& 2& 2&-1\\
 &&&& 0& 0&-2&-1&-1\\
 &&&&& 2&-3&-1& 0\\
 &&&&&& 2& 1& 0\\
 &&&&&&& 0&-1\\
 &&&&&&&&-2\\
    \end{bmatrix}\\
&\begin{bmatrix}
 0& 1& 1& 1& 1&-2& 2& 1& 0\\
 & 2&-1& 3& 2& 1&-3& 2& 0\\
 &&-2& 1& 1&-2& 1& 1&-1\\
 &&& 0& 2&-1& 2&-1& 3\\
 &&&& 2& 0& 0& 2&-1\\
 &&&&& 2&-2&-1& 3\\
 &&&&&& 2& 2& 0\\
 &&&&&&& 4& 4\\
 &&&&&&&&-2\\
    \end{bmatrix}
&&\begin{bmatrix}
 0& 1& 1&-2& 1& 1& 1&-1& 1\\
 & 0& 0& 0& 2& 0& 2& 1& 0\\
 && 2& 1&-3&-2& 1&-1& 4\\
 &&& 2&-1& 1& 1&-1& 0\\
 &&&& 6&  4&-1& 0& 1\\
 &&&&& 4&-3&-2& 0\\
 &&&&&&-2& 0& 4\\
 &&&&&&& 0& 0\\
 &&&&&&&&-2\\
     \end{bmatrix}\\
     &\begin{bmatrix}
 0& 0& 1& 0& 1& 1& 1&-3& 0\\
 & 0&-1& 0& 0& 1& 0&-2& 0\\
 &&-2& 2& 0& 0& 1& 3& 1\\
 &&& 2& 2& 2& 0&-1& 0\\
 &&&& 0& 0& 1&-2& 0\\
 &&&&& 0& 1& 0& 1\\
 &&&&&& 2& 1& 0\\
 &&&&&&& 0&-5\\
 &&&&&&&&-2\\
    \end{bmatrix}
\end{align*}
}

\vfill
\pagebreak

\end{document}